\newcommand{\bpsi}{\boldsymbol{\psi}}
\newcommand{\bvarphi}{\boldsymbol{\varphi}}
\newcommand{\R}{\mathbb{R}}
\newcommand{\bu}{\mathbf{u}}
\newcommand{\bw}{\mathbf{w}}
\newcommand{\bF}{\mathbf{F}}
\newcommand{\bG}{\mathbf{G}}
\newcommand{\bH}{\mathbf{H}}
\newcommand{\bL}{\mathbf{L}}
\newcommand{\bV}{\mathbf{V}}
\newcommand{\bW}{\mathbf{W}}
\newcommand{\bx}{\mathbf{x}}
\newcommand{\bm}{\mathbf{m}}
\newcommand{\bn}{\mathbf{n}}
\newcommand{\bq}{\mathbf{q}}
\newcommand{\bv}{\mathbf{v}}
\newcommand{\diver}{\mathrm{div}\,}
\newcommand{\curl}{\mathrm{curl}\,}
\DeclareMathOperator*{\esssup}{ess\,sup}
\newtheorem{definition}{Definition}[section]
\newtheorem{theorem}{Theorem}[section]
\newtheorem{lemma}{Lemma}[section]
\numberwithin{equation}{section}
\title[An inverse  problem for nonhomogeneous asymmetric fluids]
{A local uniqueness result for an inverse problem 
to the system modelling nonhomogeneous asymmetric fluids}
\author[Coronel]{An{\'\i}bal\ Coronel$^\dag$}
\author[Rojas-Medar]{Marko Rojas-Medar$^\dag$}
\thanks{$^\dag$ GMA, Departamento de Ciencias B\'asicas,
Facultad de Ciencias, Universidad del B\'{\i}o-B\'{\i}o, 
Campus Fernando May, Chill\'{a}n, Chile, 
E-mail: {\tt acoronel@ubiobio.cl}, {\tt marko.medar@gmail.com}}
\date{\today}
\begin{document}

\begin{abstract}
In this paper, we prove the local uniqueness of an
inverse problem arising in the nonstationary
flow of a nonhomogeneous incompressible asymmetric fluid
in a bounded domain with smooth boundary. 
The direct problem is an
initial-boundary value problem for a system for the velocity
field, the angular velocity of rotation of the fluid particles,
the mass density and the pressure distribution. The inverse
problem consists in the external force recover 
assuming an integral  measurements on the boundary.
We  characterize the inverse problem solutions using 
an operator equation of a second kind,
which is deduced form the application of the Helmholtz decomposition.
We introduce  several estimates which implies the hypothesis
of the Tikhonov fixed point theorem.
\end{abstract}

\keywords{inverse source problem, 
			nonhomogeneous asymmetric fluids,
			micropolar fluids,  
			integral overdetermination,
			navier-stokes system}


\maketitle

\section{Introduction}

The fluids with density-dependent and non-symmetric behavior of the stress tensor 
belong to a widely class of fluids which are relevant in 
many industrial applications and in 
several areas of science. Concerning to the industrial and
laboratory applications, 
this kind of fluids appears for instance
in animal blood flow, lubrication theory,
polymer suspensions and liquid crystals 
\cite{kim_2005,lukaszewicz_book,petrosyan_book}. 
Now, among the sciences
we have the following: physics, partial differential equations,
functional analysis, control theory, numerical analysis, biology, hydrodynamics, etc 
\cite{aero_1965,antontsev_1990,ashraf_2009,ladyzhenskaya_book}.
It is known that there exists several theories to describe the 
behavior of this kind of fluids but there is  not still  
a universal theory to describe all of them. In particular,
one of the most important approaches has been done by
A. C. Eringen in \cite{eringen_1964} (see also \cite{eringen_1966}), 
where two relevant facts are introduced. First,
it is introduced the concept of micropolar fluids 
to characterize the fluids consisting of rigid,
randomly oriented (or spherical) particles suspended in a viscous medium,
where the deformation of fluid particles is ignored. Second,
it is deduced the mathematical model for micropolar fluids which consists of
a simple and consistent 
generalization of the classical Navier-Stokes model. 
Nowadays, the micropolar fluids are also called asymmetric fluids 
and the mathematical properties are largely studied by several authors, see 
for instance \cite{antontsev_1990,boldrini_1996,boldrini_2003,braz_2014,braz_2007,
conca_2002,lukaszewicz_1990,lukaszewicz_1989,rojasmedar_2005,vitoriano_2013}.
However, since the nonhomogeneous asymmetric model is a system 
related with Navier-Stokes equations, 
where several  open questions exists.
In particular, in this paper we
address an answer to the question of well posedness for a source inverse problem.
Moreover, for details of some other still unsolved  problems 
related with control and a geometric  inverse
problems consult the recent survey given by E. Fern\'andez-Cara 
and collaborators in~\cite{fernandez_2013}.

In this paper we study the inverse problem of determining 
the density functions $\bF$ and $\bG$, modelling the vector
external sources for the linear and the angular momentum of particles,
in a system for the motion in a finite time $t\in [0,T]$ 
of nonhomogeneous viscous incompressible asymmetric fluid 
on a bounded and regular domain $\Omega\subset\mathbb{R}^3$, 
with boundary~$\partial\Omega$:
\begin{eqnarray}
&&(\rho \bu)_t+\diver(\rho \bu\otimes \bu)
	-(\mu +\mu_r)\Delta \bu+\nabla p
	=2\mu_r \curl \bw+\rho \bF,
	\quad\mbox{in}\quad Q_T:=\Omega\times [0,T],
\label{eq:momento_lineal}
\\
&&\diver (\bu)=0,
	\quad\mbox{in}\quad Q_T,
\label{eq:incompresibilidad}
\\
&&(\rho \bw)_t+\diver(\rho \bw\otimes \bw)-(c_a+c_d)\Delta \bw
	-(c_0+c_d-c_a)\nabla\diver \bw+
	4\mu_r \bw
\nonumber
\\
&&
\hspace{2cm}
=2\mu_r \curl \bu+\rho \bG,
	\quad\mbox{in}\quad Q_T,
	\quad
	c_0+c_d>c_a,
\label{eq:momento_angular}
\\
&&
 \rho_t+\bu\cdot \nabla \rho=0,
	\quad\mbox{in}\quad Q_T,
\label{eq:ecuacion_continuidad}
\\
&&
 \bu(\bx,0)=\bu_0(\bx),\quad  \bw(\bx,0)=\bw_0(\bx),\quad  \rho(\bx,0)=\rho_0(\bx),
	\quad\mbox{on}\quad \Omega,
\label{eq:direct_problem_ic}
\\
&&
 \bu(\bx,t)=\bw(\bx,t)=0,
	\quad\mbox{on}\quad \Sigma_T:=\partial\Omega\times [0,T],
\label{eq:direct_problem_bc}
\\
&&
\int_{\Omega}\rho(\bx,t)\bu(\bx,t)\cdot\bpsi^{\bu}(\bx)d\bx=\phi^{\bu}(t),
	\quad
	\int_{\Omega}\rho(\bx,t)\bw(\bx,t)\cdot\bpsi^{\bw}(\bx)d\bx=\phi^{\bw}(t),
	\quad
	 t\in [0,T].\hspace{1cm}
\label{eq:direct_problem_ovc}
\end{eqnarray}
Here $\bu,\bw,\rho$ and $p$ denote the velocity field, the angular velocity of rotation of 
the fluid particles, the mass density and the pressure distribution, respectively. 
The constant $\mu>0$ is the usual Newtonian viscosity
and the positive  constants $\mu_r,c_0$ and $c_d$ are  the additional viscosities related
to the lack of symmetry of the stress tensor.
The functions $\bpsi^{\bu},$ $\bpsi^{\bw},$ $\phi^{\bu}$ and $\phi^{\bw}$ in 
the integral overdetermination condition \eqref{eq:direct_problem_ovc}
are given and satisfy some restrictions which will be specified later. 
The differential notation is the standard ones, i.e.  the
symbols $\bu_t,\bw_t$ and $\rho_t$ denote the time derivatives and
$\nabla,\Delta,\diver$ and $\curl$ denote the gradient, Laplacian,
divergence and rotational operators, respectively.
Now, by applying the Helmholtz decomposition to $\bF$ 
and assuming a coherent form of $\bG$ we deduce that  
the vector fields $\bF$ and $\bG$ are representable by the following relations
\begin{eqnarray}
\bF(\bx,t)=f(t)(\nabla h(\bx,t)- \bm(\bx,t)), 
\quad
\bG(\bx,t)=g(t)\bq(\bx,t),
\quad
\mbox{in $\Omega_T$,}
	\label{eq:helmholtz_f_uno}
\end{eqnarray}
where $\bm$ and $\bq$ are given functions and $f,g$ and $h$
are unknown functions such that
\begin{eqnarray}
&&\diver(\rho\nabla h)=\diver(\rho \bm), 
	\quad\mbox{in}\quad\Omega, 
	\label{eq:helmholtz_f_dos}
\\
&&\frac{\partial h}{\partial \bn}=\bm\cdot  \bn, 
	\quad\mbox{on}\quad \Sigma_T,
	\label{eq:helmholtz_f_tres}
\\
&&\int_{\Omega}h(\bx,t)d\bx=0,
	\quad t\in[0,T],
	\label{eq:helmholtz_f_cuatro}
\end{eqnarray}
where $\bn$ is the outward unit normal vector to $\partial\Omega$.
We note that,  the determination
of $f$ and $g$, called the coefficients of $\bF$ and $\bG$, solves the inverse problem, 
since  $\bm$ and $\bq$ are known. Indeed, if $f$  is  determined,
we can find $h$ by 
solving \eqref{eq:helmholtz_f_dos}-\eqref{eq:helmholtz_f_cuatro}, since
$\bm$ is given.
Now, if additionally $g$ is determined, it is clear that $\bF$ and $\bG$
can be recovered by \eqref{eq:helmholtz_f_uno}.
Hence, from \eqref{eq:helmholtz_f_uno}, 
the inverse problem of recovery the vector fields $\bF$ and $\bG$  admits an
equivalent interpretation like an inverse coefficients determination problem.
Moreover, this behavior implies that the inverse problem can be equivalently
reformulated as an operator equation of second kind 
(see subsection~\ref{section:inv_operator_form}).

Similar inverse problems have been extensively  studied by
Prilepko, Orlovsky, Vasin and collaborators, 
see the monograph~\cite{pripleko_2000} and references therein. 
In a broad sense, their methodology to 
analyze the different source inverse problems consists of three big steps:
\begin{itemize}
\item[E$_1$.] To introduce the functional framework where the direct problem
is well-posed.
\item[E$_2$.] To use the definition of the solution of the direct
problem, in order to derive an operator equation of the second kind 
which solvability is equivalent to the solution of the inverse problem.
\item[E$_3$.] To prove the unique 
solution of the operator equation by applying fixed point arguments.
\end{itemize}
For instance, in order to fix ideas, let us
consider the following inverse source problem
for the heat equation: given $g,\omega$ and $\phi$ find a pair
of functions $\{u,f\}$ such that 
\begin{eqnarray}
\left\{
\begin{array}{lcll}
u_t(x,t)-\Delta u(x,t)&=&f(t)g(x,t),& (x,t)\in Q_T,
\\
u(x,0)&=&0,& x\in \Omega,
\\
u(x,t)&=&0,& (x,t)\in\partial\Sigma_T,
\\
\int_{\Omega}u(x,t)w(x)dx&=&\varphi(x), & t\in [0,T].
\end{array}
\right.
\label{eq:parabolic}
\end{eqnarray}
In this case for the step E$_1$,
is well known that one of the most general
functional frameworks where the well-posedness of the forward problem 
holds is given
by the Lebesgue and Sobolev spaces~\cite{evans_book}, 
see subsection~\ref{subsec:fucional_framework}  or 
consult~\cite{lions_book_1,lions_book_2,pripleko_2000} for the standard notation
of this spaces. Actually, considering that
$g\in C([0,T],L^2(\Omega))$ and $f\in L^2(\Omega)$,
we can prove that there exists $u\in W^{2,1}_{2,0}(Q_T)$
a unique generalized solution of the forward problem
\eqref{eq:parabolic} (i.e. except the overdetermination condition).
In the second step E$_2$, 
we prove that the solvability of the inverse problem
\eqref{eq:parabolic} is equivalent to the solution of the second
kind operator equation $f=Af+\psi$ on $L^2(0,T),$ where the operator $A$ and
the function  $\psi$ are defined as follows
\begin{eqnarray}
 (Af)(t)=\frac{1}{g_1(t)}\int_{\Omega}u(x,t)\Delta w(x)dx,
 \;\mbox{and}\;
 \psi =\phi' g_1
\;\mbox{with}\;
 g_1(t)=\int_{\Omega}g(x,t)w(x)dx.
\end{eqnarray}
Now, concerning to the step E$_3$ 
using the fixed point arguments prove that $f=Af+\psi$ has
a unique solution. In point of fact, we can prove that 
the operator equation can be rewritten as follows $f=\hat{A}f$
with $\hat{A}f=Af+\phi'/g_1$ and $\hat{A}$ has a fixed point
in $L^2(0,T).$ In general, the fact E$_3$  
is a straightforward consequence of a priori estimates for
the solution of the direct problem.
Clearly, the central difficulty in 
the analysis of source problems,
by applying this approach, is the proof of that the operator satisfies
the hypothesis of the fixed point theorem (see Theorem~\ref{teo:tikhonov}).
Now, we note that, it was proved  in \cite{pripleko_2000}
that  this general methodology 
can be  applied to analyze the inverse source problems
for  elliptic, hyperbolic, parabolic and
even for nonstationary linearized Navier-Stokes system with constant density
(in this case also consult \cite{vasin_1993}).
Recently, Fan and Nakamura  in \cite{fan_2008}, following
the same general approach and the results of J. Simon~\cite{simon_1990},
have been proved the local solvability of the source problem 
for Navier-Stokes system with variable density function.
In the present paper, we generalize these results for the 
inverse problem \eqref{eq:momento_lineal}-\eqref{eq:helmholtz_f_cuatro}.

Other general approaches applied to close inverse problems are given in the following
fundamental books on inverse problems~\cite{anger_book,isakov_book,isakov_book2}.
In our knowledge many of these techniques have not been
applied to analyze inverse source problems for
Navier-Stokes and related systems. An exception, on this sense
is the recent results obtained by Choulli and collaborators in~\cite{chouli_2013}
for the case of  the non-stationary linearized Navier–Stokes equations where
as observation data it is assumed that the velocity is 
given on an arbitrarily fixed sub-domain and over some time
interval. We remark that
on \cite{chouli_2013}, in order to solve the inverse problem,
the  authors have used an approach based on Carleman estimates
which was largely applied to geometric inverse problems~\cite{fernandez_2013}.

The paper is organized as follows. In section~\ref{sec:preliminares},
we recall some preliminary concepts and cover the
step E$_1$ of the methodology. In section~\ref{sec:priori_estimates},
we develop some a priori estimates which will be useful
in the step E$_3$. In section~\ref{sec:dp},
we prove the well posedness of the direct problem, i.e. we develop
the step  E$_2$. 
Finally, in
section~\ref{sec:ip} we set the details of the step E$_3$,
obtaining the local solvability of the inverse 
problem \eqref{eq:momento_lineal}-\eqref{eq:helmholtz_f_cuatro}.

\section{Preliminaries}
\label{sec:preliminares}

In this section we consider the notation of the functional framework,
the rigorous definition of direct and inverse problems and
the general assumptions.

\subsection{Some functional spaces}
\label{subsec:fucional_framework}
Let us starting by recalling the standard notation of some functional
spaces and operators which are familiar in the mathematical 
theory of fluids modelled
by Navier-Stokes system, see \cite{antontsev_1990,lions_book_1,lions_book_2,temam_1977}. 
The Banach space of measurable functions that are $p$-integrable 
in the sense of Lebesgue or are essentially bounded on $\Omega$ are denoted by $L^p(\Omega)$
for $p\in[1,\infty[$ and by $L^{\infty}(\Omega)$, respectively. 
We recall that, the norms in $L^p(\Omega)$ for  $p\in [1,\infty[$ and
$p=\infty$ are defined as follows
\begin{eqnarray*}
\|u\|_{L^p(\Omega)}:=\left(\int_{\Omega}|u(\bx)|^pd\bx\right)^{1/p}
\mbox{ and }
\|u\|_{L^{\infty}(\Omega)}:=\esssup_{\bx\in\Omega}|u(\bx)|,
\end{eqnarray*}
respectively.
The notation $W^{m,q}(\Omega),$ where $m\in\mathbb{N}$ and $q\ge 1$ is used for the Sobolev
space consisting of all functions in $L^q(\Omega)$ having all distributional derivatives
of the first $m$ orders belongs to $L^q(\Omega)$, i.e.
\begin{eqnarray*}
W^{m,q}(\Omega):=\Big\{\;u\in  L^q(\Omega)\;:\; 
	D^{\alpha}u\in L^q(\Omega)\mbox{ for } |\alpha|=1,\ldots,m\;\Big\}.
\end{eqnarray*}
The norm of $W^{m,q}(\Omega)$ is naturally defined as follows
\begin{eqnarray*}
\|u\|_{W^{m,q}(\Omega)}
	:=\left(\sum_{k=0}^{m}\sum_{|\alpha|=k}
	\|D^{\alpha}u\|^q_{L^q(\Omega)}\right)^{1/q}
\mbox{ and }
\|u\|_{W^{m,\infty}(\Omega)}
	:=\max_{0\le |\alpha|\le m}\|D^{\alpha}u\|_{L^\infty(\Omega)}.
\end{eqnarray*}
The vector-valued spaces $[L^2(\Omega)]^3$ and
$[W^{m,p}(\Omega)]^3$ are defined as usual and by simplicity are
denoted by bold characters, i.e. $\bL^2(\Omega)$ and
$\bW^{m,p}(\Omega)$, respectively.
Also, we use the following rather common notation 
in mathematical theory of fluid mechanics:
\begin{eqnarray*}
&&H^m(\Omega)=W^{m,2}(\Omega), 
\quad
H^m_0(\Omega)=\overline{C_0^\infty(\Omega)}^{\|\cdot\|_{H^1(\Omega)}},
\\
&&
\mathcal{V}(\Omega)=\Big\{\bv\in (C_0^\infty(\Omega))^3\;:\;
\diver(\bv)= 0\mbox{ in }\Omega\;\Big\},
\\
&&
\bH=\overline{\mathcal{V}(\Omega)}^{\|\cdot\|_{\bL^2(\Omega)}}
\quad
\mbox{and}
\quad
\bV=\overline{\mathcal{V}(\Omega)}^{\|\cdot\|_{\bH_0^1(\Omega)}},
\end{eqnarray*}
where $\overline{A}^{\|\cdot\|_{B}}$ denotes the closure of $A$ in $B$.
Furthermore, for a given Banach space $X$, we denote by $L^r(0,T;X)$, $r\ge 1$,
the  Banach space of the $X$-valued functions having bounded the
norm $\|\cdot\|_{L^r(0,T;B)}$ defined as follows
\begin{eqnarray*}
\|u\|_{L^r(0,T;B)}:=\left(\int_0^T\|u(\cdot,t)\|_{B}^r dt\right)^{1/r}
\mbox{ and }
\|u\|_{L^{\infty}(0,T;B)}:=\esssup_{t\in[0,T]}\|u(\cdot,t)\|_{B}.
\end{eqnarray*}
Concerning to the linear operators, we define the operators: $A,L_0$
and $L$.
We denote by $A$ the stokes operator defined from 
$D(A):=\bV\cap \bH^2(\Omega)\subset \bH$ to $ \bH$
by $A\bv=P(-\Delta \bv)$, where $P$ is the orthogonal projection of 
$\bL^2(\Omega)$ onto
$\bH$ induced by the Helmholtz decomposition of $\bL^2(\Omega)$. It is well known that
$A$ is an unbounded linear and positive self-adjoint  operator, and is characterized by 
the following identity
\begin{eqnarray}
(A\bw,\bv)=(\nabla \bw,\nabla \bv),
	\quad \forall \bw\in D(A),\quad \bv\in V,
\label{eq:ident_stokes}
\end{eqnarray}
where $(\cdot,\cdot)$ is the usual scalar product in $\bL^2(\Omega)$.
In second place, we consider the strongly uniformly elliptic operators
$L_0$ and $L$ defined on $D(L_0)=D(L)=\bH^1_0(\Omega)\cap \bH^2(\Omega)$
as follows
\begin{eqnarray}
L_0z=-(c_a+c_d)\Delta z-(c_0+c_d-c_a)\nabla \diver z
\quad\mbox{and}\quad
Lz=L_0z+4\mu_r z.
\label{eq:def_of_L_and_L0}
\end{eqnarray}
Note that $L$ is a positive operator under the assumption $c_0+c_d>c_a$, see 
\eqref{eq:momento_angular}.

\subsection{Direct and inverse problem solution definitions}
Using the described notation on the section~\ref{subsec:fucional_framework}
and the ideas given on \cite{boldrini_2003},
we can give a rigorous formulation of direct and inverse problem related to
equations \eqref{eq:momento_lineal}-\eqref{eq:helmholtz_f_cuatro}.

\begin{definition}
\label{def:strong_solutions_dp}
Consider that the  functions $f,g,m$ and $q$ are given.
Then,
a collection of functions
$\{\bu,\bw,\rho,p,h\}$ is a 
solution of the direct problem \eqref{eq:momento_lineal}-\eqref{eq:direct_problem_ic}
and \eqref{eq:helmholtz_f_uno}-\eqref{eq:helmholtz_f_cuatro} if
there exists $T_*\in ]0,T]$ such that the functions satisfy
the following four conditions
\begin{enumerate}
\item[(a)] Regularity conditions:
\begin{eqnarray}
&&
\bu\in C^0\Big([0,T_*[;D(A)\Big)\cap C^1\Big([0,T_*[;\bH\Big),
\label{eq:reg_of_u}
\\
&&
\bw\in C^0\Big([0,T_*[;D(L)\Big)\cap C^1\Big([0,T_*[;\bL^2(\Omega)\Big)
\quad\mbox{and}
\label{eq:reg_of_w}
\\
&&
\rho \in C^1(\overline{\Omega}\times [0,T_*[).
\label{eq:reg_of_rho}
\end{eqnarray}
\item[(b)] Integral identities:
\begin{eqnarray}
&&\Big((\rho \bu)_t,\bv\Big)
	-\int_{\Omega}\rho \bu\otimes \bu:\nabla \bv\; d\bx
	+(\mu +\mu_r)\Big(A \bu,\bv \Big)
\nonumber\\
&&
\hspace{2cm}
=2\mu_r \Big(\curl \bw,\bv\Big)+
	\Big(\rho \bF,\bv\Big),\quad
	\mbox{for } t\in ]0,T_*[\mbox{ and }\forall \bv\in V,
\label{eq:formulacion_variacional_u}
\\
&&\Big((\rho \bw)_t,\bvarphi\Big)
	-\int_{\Omega}\rho \bw\otimes \bw:\nabla \bvarphi \; d\bx
	+\Big(L \bw,\bvarphi \Big)
\nonumber\\
&&
\hspace{2cm}
=2\mu_r \Big(\curl \bu,\bvarphi\Big)+
	\Big(\rho \bG,\bvarphi\Big),\quad
	\mbox{for } t\in ]0,T_*[\mbox{ and }\forall \bvarphi\in 
	\bH^1_0(\Omega).
\label{eq:formulacion_variacional_w}
  \end{eqnarray}
\item[(c)] Mass conservation: $\rho$ satisfies \eqref{eq:ecuacion_continuidad} for
$(x,t)\in \overline{\Omega}\times [0,T_*[$.
\item[(d)] Initial condition:
$\bu,\bw,\rho$ satisfies \eqref{eq:direct_problem_ic} for $\bx\in\Omega.$
\end{enumerate}

\end{definition}

\begin{definition}
\label{def:strong_solutions_ip}
Consider that the  functions $\phi^{\bu},\phi^{\bw},\bpsi^{\bu},\bpsi^{\bw}$ $m$ and $q$ are given.
Then, a collection of functions $\{\bu,\bw,\rho,p,h,f,g\}$
is called a solution of the inverse problem
\eqref{eq:momento_lineal}-\eqref{eq:helmholtz_f_cuatro} if the following 
three conditions hold:
\begin{enumerate}
\item[(i)] The functions $f$ and $g$ are belongs to $H^1([0,T])$, 
\item[(ii)] The collection $\{\bu,\bw,\rho,p,h\}$ is a solution of the direct 
problem \eqref{eq:momento_lineal}-\eqref{eq:direct_problem_ic}
and \eqref{eq:helmholtz_f_uno}-\eqref{eq:helmholtz_f_cuatro} and
\item[(iii)] The overdetermination condition \eqref{eq:direct_problem_ovc}
is satisfied.
\end{enumerate}

\end{definition}

\subsection{General hypothesis and well-posedness of the direct problem}

Hereafter, we make the following regularity assumptions:
\begin{enumerate}
\item[(H$_1$)] The initial  density $\rho_0$  belongs to $C^1(\overline{\Omega})$
and $\rho_0(\bx)\in [\alpha,\beta]\subset]0,\infty[$ a.e. in $\Omega$,
\item[(H$_2$)] The initial velocity $\bu_0$  belongs to $D(A)$ and
\item[(H$_3$)] The initial angular velocity $\bw_0$   belongs to $D(L).$
\item[(H$_4$)] The functions $h$ and $r$ belong to $C^1([0,T])$.
\item[(H$_5$)] The functions $\bpsi^{\bu}$ and $\bpsi^{\bw}$ belong to 
	$\bH^1_0(\Omega)\cap \bW^{1,\infty}(\Omega)\cap \bH^2(\Omega)$ and such that
	$\diver (\bpsi^{\bu})=0$ in $\Omega.$
\item[(H$_6$)] The functions  $\phi^{\bu}$ and $\phi^{\bw}$ belong to 
	$\Big(H^2(\Omega)\Big)^3.$
\item[(H$_7$)] There exists $h^\epsilon,r^\epsilon\in\R^+$ such that
\begin{eqnarray*}
\left|\int_\Omega \rho_0(\bx)\Big(\nabla h(\bx,0)-\bm(\bx,0)\Big)d\bx\right|\ge 2h^\epsilon
\quad\mbox{and}\quad
\left|\int_\Omega \rho_0(\bx)\bq(\bx,0)d\bx\right|\ge 2r^\epsilon.
\end{eqnarray*}
\end{enumerate}

\section{A priori estimates}
\label{sec:priori_estimates}

The main result of this section is the following theorem: 

\begin{theorem}
\label{teo:global_estimates}
Let $\upeta\in\R^+$ such that $\|(f,g)\|_{[H^1(0,T)]^2}\le \upeta$.
Then, there exists $\upkappa_j\in\R^+$ for $j=1,\ldots,11$ and 
two small enough times $T_1,T_2\in [0,T_*]$, 
independents of $f$ and $g$, such that the following estimates hold
\begin{eqnarray}
&&\| \bu\|_{L^\infty([0,T_1];\bH^1_0(\Omega))}+
	\| \bw\|_{L^\infty([0,T_1];\bH^1_0(\Omega))}+
	\|\bu_t\|_{L^\infty([0,T_1];\bL^2(\Omega))}+
	\| \bw_t\|_{L^\infty([0,T_1];\bL^2(\Omega))}
	\le\upkappa_1,\qquad
\label{teo:global_estimates_kapa1}
\\
&&
\| \bu_t\|_{L^\infty([0,T_2];\bL^2(\Omega))}+
	\| \bw_t\|_{L^\infty([0,T_2];\bL^2(\Omega))}+
	\|\bu_t\|_{L^2([0,T_2];\bH^1(\Omega))}
	\nonumber
\\
&&
\hspace{3.1cm}
	+
	\| \bw_t\|_{L^2([0,T_2];\bH^1(\Omega))}+
	\| \nabla \rho\|_{L^\infty([0,T_*];\bL^q(\Omega))}
	\le\upkappa_2,
\label{teo:global_estimates_kapa2}
\\
&&
	\| h\|_{L^\infty([0,T_2];H^2(\Omega))}\le\upkappa_3,
\label{teo:global_estimates_kapa3_4}
\\
&&
	\| \nabla h_t\|_{L^\infty([0,T_2];\bL^2(\Omega))}\le\upkappa_4+\upkappa_5 \upeta,
\label{teo:global_estimates_kapa5_8}
\\
&&
\| \bu\|_{L^\infty([0,T_1];\bH^2(\Omega))}+
	\| p\|_{L^\infty([0,T_1];H^1(\Omega))}
	\le\upkappa_6+\upkappa_{7} \upeta,
\label{teo:global_estimates_kapa9_10}
\\
&&
\| \rho_t\|_{L^\infty([0,T_*];L^2(\Omega))}\le \upkappa_{8},
	\qquad
	\| \rho_t\|_{L^\infty([0,T_*];L^q(\Omega))}
	\le\upkappa_{9}+\upkappa_{10} \upeta\quad\mbox{and}
\label{teo:global_estimates_kapa11_13}
\\
&&
\| \bu\|_{L^2([0,T_2];\bW^{2,s}(\Omega))}+
	\| p\|_{L^2([0,T_2];W^{1,s}(\Omega))}
	\le \upkappa_{11}.
\label{teo:global_estimates_kapa14}
\end{eqnarray}
\end{theorem}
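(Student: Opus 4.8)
The plan is to establish the estimates in a carefully ordered bootstrap, exploiting that all constants must be independent of $f,g$ (hence the role of the bound $\|(f,g)\|_{[H^1(0,T)]^2}\le\upeta$) and that the times $T_1,T_2$ may be shrunk as needed. First I would test the linear and angular momentum equations \eqref{eq:formulacion_variacional_u}--\eqref{eq:formulacion_variacional_w} with $\bv=\bu_t$ and $\bvarphi=\bw_t$ (using $A\bu$ and $L\bu$ as the leading terms), integrate over $\Omega$, and handle the convective terms $\diver(\rho\bu\otimes\bu)$ and $\diver(\rho\bw\otimes\bw)$ by H\"older and the Sobolev embedding $\bH^1\hookrightarrow\bL^6$ together with the uniform bounds $\alpha\le\rho\le\beta$ coming from \eqref{eq:ecuacion_continuidad} and (H$_1$) (the transport equation preserves the range of $\rho_0$). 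The source terms are controlled by $\|\bF\|,\|\bG\|\le C(1+\upeta)$ via the representation \eqref{eq:helmholtz_f_uno} once $h$ is controlled in $H^2$. A Gronwall argument on a short interval $[0,T_1]$, with $T_1$ chosen small enough that the nonlinear feedback is absorbed, yields \eqref{teo:global_estimates_kapa1}. This is the standard energy step for the asymmetric system, essentially as in \cite{boldrini_2003}.

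\textbf{The density and the elliptic problem for $h$.} Next I would treat $\rho$: differentiating \eqref{eq:ecuacion_continuidad} in space, one gets a transport equation for $\nabla\rho$ with right-hand side $-\nabla\bu\cdot\nabla\rho$, so $\|\nabla\rho\|_{L^\infty([0,T_*];\bL^q)}$ is controlled by $\|\nabla\rho_0\|_{\bL^q}\exp(\int_0^t\|\nabla\bu\|_{L^\infty}\,ds)$; here one needs $\bu\in L^1(0,T_*;\bW^{1,\infty})$, which follows from \eqref{eq:reg_of_u} and is made quantitative by the $\bH^2$ bound \eqref{teo:global_estimates_kapa9_10}. Similarly $\rho_t=-\bu\cdot\nabla\rho$ gives \eqref{teo:global_estimates_kapa11_13} in $L^2$ and $L^q$, the latter inheriting the $\upeta$-dependence through $\bu$. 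For \eqref{teo:global_estimates_kapa3_4}, the function $h(\cdot,t)$ solves the Neumann problem \eqref{eq:helmholtz_f_dos}--\eqref{eq:helmholtz_f_cuatro} with coefficient $\rho$; since $\rho\in C^1$ and stays in $[\alpha,\beta]$, elliptic regularity gives $\|h\|_{H^2}\le C\|\diver(\rho\bm)\|_{L^2}+C\|\bm\cdot\bn\|_{H^{1/2}(\partial\Omega)}$, a constant independent of $f,g$ since $\bm$ is data; the zero-average normalization \eqref{eq:helmholtz_f_cuatro} kills the kernel. Differentiating this elliptic problem in $t$ produces an equation for $h_t$ whose source involves $\rho_t$, so $\|\nabla h_t\|_{L^\infty([0,T_2];\bL^2)}$ picks up exactly the affine-in-$\upeta$ bound \eqref{teo:global_estimates_kapa5_8} from the $L^q$-bound on $\rho_t$.

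\textbf{Stokes regularity and the final bootstrap.} With $\bu$ known in $L^\infty(\bH^1_0)$ and $\bu_t$ in $L^\infty(\bL^2)$, I would rewrite \eqref{eq:momento_lineal} as a stationary Stokes system $-(\mu+\mu_r)\Delta\bu+\nabla p=\mathbf{R}$ with $\mathbf{R}=-(\rho\bu)_t-\diver(\rho\bu\otimes\bu)+2\mu_r\curl\bw+\rho\bF$, estimate $\mathbf{R}$ in $\bL^2$ (the source contributes the $\upeta$-term through $\bF$), and invoke Stokes regularity to get \eqref{teo:global_estimates_kapa9_10} for $\bu$ in $\bH^2$ and $p$ in $H^1$. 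For \eqref{teo:global_estimates_kapa14} one upgrades to $L^s$ by the $L^s$-theory of the Stokes operator, using the $L^2(0,T_2;\bH^1)$ bound on $\bu_t$ from \eqref{teo:global_estimates_kapa2} and interpolation to put $\mathbf{R}\in L^2(0,T_2;\bL^s)$ for a suitable $s$ (dictated by the embedding $\bH^1\hookrightarrow\bL^6$ in three dimensions). The estimate \eqref{teo:global_estimates_kapa2} itself comes from a second energy estimate: differentiate the momentum equations in time, test with $\bu_t,\bw_t$, and control $\|(\rho\bu)_{tt}\|$-type terms using the already-established bounds; the parabolic term $\int_0^{T_2}\|\nabla\bu_t\|^2$ is produced on the left. \textbf{The main obstacle} is the circularity between \eqref{teo:global_estimates_kapa1}--\eqref{teo:global_estimates_kapa2} (energy bounds needing $\bu\in\bW^{1,\infty}$ to close the $\nabla\rho$ transport estimate) and \eqref{teo:global_estimates_kapa9_10}--\eqref{teo:global_estimates_kapa14} (Stokes regularity needing the energy bounds): the resolution is to run a fixed-point/continuation argument on a sufficiently small time interval, first assuming a bound $\upkappa$ on the full norm, deriving a better bound $C(1+\upeta)+C T^\theta\upkappa^2$, and choosing $T_1,T_2$ small (depending on $\upeta$ but not on $f,g$ individually) so that this closes. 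Care is needed that $T_1,T_2\le T_*$ and that no constant secretly depends on $\|(f,g)\|$ beyond the allowed affine $\upeta$-dependence.
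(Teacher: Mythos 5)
Your overall architecture coincides with the paper's: a first energy estimate obtained by testing the momentum equations with $\bu_t,\bw_t$ and closed on a short interval via a Riccati-type smallness-of-time argument (the paper uses Heywood's lemma rather than plain Gronwall, since the differential inequality contains $\|\nabla\bu\|^6_{\bL^2}$); a transport estimate for $\nabla\rho$ in $L^q$; elliptic regularity for the Neumann problem defining $h$ and its time derivative; Stokes and elliptic regularity for $(\bu,p)$ and $\bw$; a time-differentiated energy estimate for \eqref{teo:global_estimates_kapa2}; and a final bootstrap closing the circular dependence on a small time interval. These are precisely Lemmas~\ref{lema:aprioriestimatesforrho}--\ref{lem:estimates3} and the argument of subsection~\ref{subsec:proof_of_teo:global_estimates}.

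There is, however, one concrete gap. To close the transport estimate for $\nabla\rho$ you invoke $\bu\in L^1(0,T_*;\bW^{1,\infty})$ ``made quantitative by the $\bH^2$ bound \eqref{teo:global_estimates_kapa9_10}''. In three dimensions $\bH^2(\Omega)$ does not embed into $\bW^{1,\infty}(\Omega)$ (one only gets $\nabla\bu\in\bL^6$), so the $\bH^2$ bound cannot control $\|\nabla\bu\|_{\bL^\infty}$, and the inequality $\frac{d}{dt}\|\nabla\rho\|_{\bL^q}\le\|\nabla\bu\|_{\bL^\infty}\|\nabla\rho\|_{\bL^q}$ does not close with the regularity you have established at that stage. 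The paper's fix is exactly the $L^s$ Stokes theory that you only bring in at the very end: inequality \eqref{eq:sobolev} bounds $\frac{d}{dt}\|\nabla\rho\|^q_{\bL^q}$ by $\|\bu\|_{\bW^{2,s}}\|\nabla\rho\|_{\bL^q}$ with $s>3$ (so that $\bW^{2,s}\hookrightarrow\bW^{1,\infty}$), and the bound \eqref{eq:w2s_regularity} leading to \eqref{teo:global_estimates_kapa14} is what feeds this Gronwall argument. Hence the $\bW^{2,s}$ estimate must be threaded into the $\nabla\rho$ estimate, not derived after it; your ordering (density first, $L^s$ regularity last) breaks the chain. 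A secondary point: your claim that the $H^2$ elliptic constant for $\diver(\rho\nabla h)=\diver(\rho\bm)$ is ``independent of $f,g$ since $\bm$ is data'' overlooks that the coefficient $\rho$ depends on the solution; the paper (Lemma~\ref{lema:aprioriestimatesforhandr}) rewrites the equation as a Poisson problem whose right-hand side contains $\rho^{-1}\nabla\rho\cdot\nabla h$ and absorbs the resulting $\|h\|^{3/q}_{H^2}$ factor by Gagliardo--Nirenberg and Young, which is where the dependence on $\|\nabla\rho\|_{\bL^q}$ enters \eqref{h_and_r_estimate}. Your continuation/bootstrap resolution of the circularity is otherwise sound and is in the spirit of how the paper combines \eqref{eq:sobolev}, \eqref{eq:w2s_regularity} and \eqref{eq:lem:estimates3}.
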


\vspace{0.5cm}
We postpone the proof of Theorem~\ref{teo:global_estimates} to the 
subsection~\ref{subsec:proof_of_teo:global_estimates},
since we require the introduction of   
several  appropriate space estimates which leads to the inequalities 
\eqref{teo:global_estimates_kapa1}-\eqref{teo:global_estimates_kapa14}.

\subsection{Some space a priori estimates}
\label{subsec:a_priori_estimates}

We recall that, by
Pioncar\'e and the Gagliardo-Nirenberg inequalities we have that
there exists  $C_{poi}>0$ and $C_{gn}>0$ depending only on 
$p,q$ and $\Omega$ such that
\begin{eqnarray}
\hspace{0.5cm}
\begin{array}{l}
\|u\|_{L^q(\Omega)}\le C_{poi} \|\nabla u\|_{L^p(\Omega)},\;\;
p\in [1,3[,\;\; q\in [1,3p(3-p)^{-1}]\;\;\mbox{and}\;\;
u\in W^{1,p}_0(\Omega),
\\
\|\nabla u\|_{L^{2q/(q-2)(\Omega)}}
\le C_{gn} \|\nabla u\|^{1-3/q}_{L^2(\Omega)}\|u\|^{3/q}_{H^2(\Omega)},
\;\;
u\in H^2(\Omega).
\end{array}
\label{eq:spoingag}
\end{eqnarray}
Also we denote by $C^{2,\infty}_{iny}$ the constant such that
$\| u\|_{L^\infty(\Omega)}\le C^{2,\infty}_{iny} \| u\|_{H^2(\Omega)}$
for all $u\in H^2(\Omega)$,
i.e. for the continuous embedding of $H^2(\Omega)$ in $L^\infty(\Omega)$.

\begin{lemma}
\label{lema:aprioriestimatesforrho}
The following estimate holds:
$0< \alpha \le \rho(\bx,t)\le\beta$ for 
all $(\bx,t)\in\Omega\times [0,T_*]$.
\end{lemma}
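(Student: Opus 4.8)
The statement asserts that the density stays between its initial lower and upper bounds $\alpha$ and $\beta$. The plan is to exploit the transport structure of equation \eqref{eq:ecuacion_continuidad}, namely that $\rho$ is constant along the characteristics of the divergence-free velocity field $\bu$. First I would invoke the regularity \eqref{eq:reg_of_u} of $\bu$: since $\bu\in C^0([0,T_*[;D(A))\subset C^0([0,T_*[;\bH^2(\Omega))$ and $\bH^2(\Omega)\hookrightarrow \bW^{1,\infty}(\Omega)$ in dimension three, the field $\bu(\cdot,t)$ is Lipschitz in $\bx$, uniformly for $t$ in compact subsets of $[0,T_*[$. Together with $\bu=0$ on $\Sigma_T$, this guarantees that the characteristic system
\begin{eqnarray*}
\frac{d}{ds}\bX(s;\bx,t)=\bu\big(\bX(s;\bx,t),s\big),\qquad \bX(t;\bx,t)=\bx,
\end{eqnarray*}
has a unique solution by the Picard--Lindel\"of theorem, and the flow map $\bx\mapsto\bX(s;\bx,t)$ is a well-defined homeomorphism of $\overline{\Omega}$ onto itself for each pair $s,t$ (the boundary condition keeps trajectories inside $\Omega$).

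Next I would check that $\rho$ is constant along these trajectories. Using \eqref{eq:reg_of_rho}, $\rho\in C^1(\overline{\Omega}\times[0,T_*[)$, so $t\mapsto \rho(\bX(t;\bx_0,0),t)$ is $C^1$ and its derivative equals $\big(\rho_t+\bu\cdot\nabla\rho\big)(\bX(t;\bx_0,0),t)=0$ by \eqref{eq:ecuacion_continuidad}. Hence $\rho(\bX(t;\bx_0,0),t)=\rho_0(\bx_0)$ for all $t\in[0,T_*[$. Since the flow is a bijection of $\overline{\Omega}$, every point $(\bx,t)\in\overline{\Omega}\times[0,T_*[$ is of the form $(\bX(t;\bx_0,0),t)$ for a unique $\bx_0\in\overline{\Omega}$, so $\rho(\bx,t)=\rho_0(\bx_0)\in[\alpha,\beta]$ by hypothesis (H$_1$). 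This yields $\alpha\le\rho(\bx,t)\le\beta$ on $\overline{\Omega}\times[0,T_*[$, and the bound extends to the closed interval $[0,T_*]$ by the continuity of $\rho$ up to $t=T_*$ (or, if one prefers, by running the same argument on $[0,T_*]$ using one-sided continuity). In particular $\rho>0$.

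I expect the only genuine point requiring care is the justification that the characteristics do not leave $\overline{\Omega}$ and that the flow is globally defined on $[0,T_*[$ — this is where the no-slip boundary condition $\bu|_{\Sigma_T}=0$ and the $\bW^{1,\infty}$ bound on $\bu$ are used; everything else is the classical method of characteristics for a linear transport equation. An alternative, avoiding characteristics entirely, is an energy/renormalization argument: multiply \eqref{eq:ecuacion_continuidad} by a suitable convex function of $(\rho-\beta)_+$ (respectively $(\alpha-\rho)_+$), integrate over $\Omega$, use $\diver\bu=0$ and $\bu|_{\partial\Omega}=0$ to kill the transport term, and conclude $\|(\rho-\beta)_+(\cdot,t)\|_{L^2(\Omega)}=0$ from the initial condition via Gr\"onwall; but since the regularity \eqref{eq:reg_of_rho} is already classical, the characteristic argument is cleaner and I would present that one.
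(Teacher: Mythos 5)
Your argument is essentially the paper's own proof written out in full: the paper simply invokes the maximum principle for the transport equation \eqref{eq:ecuacion_continuidad} with the divergence-free field \eqref{eq:incompresibilidad} and hypothesis (H$_1$), which is exactly the constancy-along-characteristics argument you give. One technical slip worth fixing: in three dimensions $\bH^2(\Omega)$ does \emph{not} embed into $\bW^{1,\infty}(\Omega)$ (only into $\bW^{1,6}(\Omega)$ and $C^{0,1/2}(\overline{\Omega})$), so Picard--Lindel\"of is not directly available; however, the bound does not require uniqueness of the flow --- continuity of $\bu(\cdot,t)$ suffices to produce, via Peano, a backward characteristic from any $(\bx,t)$ to some $\bx_0\in\overline{\Omega}$ at time $0$, along which $\rho$ is constant, which is all the conclusion needs.
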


\begin{proof}
We deduce the estimate by equations \eqref{eq:incompresibilidad}
and \eqref{eq:ecuacion_continuidad}, the hypothesis (H$_1$)
and the maximum principle.
\end{proof}

\begin{lemma}
\label{lema:aprioriestimatesforhandr}
Assume that $\{\epsilon,\epsilon',\epsilon'',\dot{\epsilon}\}\subset \mathbb{R}^+$ 
with $\epsilon\in ]0,2\alpha\beta^{-1}[$ and
$\epsilon'\in ]0,\alpha(C^{reg}C_{gn})^{-1}[$. Then,
there exists  $\Uppi_i,$  $i=1,\ldots,4,$ defined as follows
\begin{eqnarray}
&&\Uppi_1=\sqrt{\beta\epsilon^{-1} (2\alpha-\beta \epsilon)^{-1}},
\label{eq:uppi_uno}
\\
&&\Uppi_2=\frac{C^{reg}}{\alpha-\epsilon'C^{reg}C_{gn}}\Big(
    \alpha\|\nabla \bm(\cdot,t)\|_{\bL^2}
    +|\Omega|^{(q-2)/q}\epsilon'\|\bm(\cdot,t)\|_{\bL^\infty}
    \Big),
\label{eq:uppi_dos}    
 \\
&&\Uppi_3=\frac{C^{reg}}{\alpha-\epsilon'C^{reg}C_{gn}}\Big(
        |\Omega|^{(q-2)/q}\epsilon'(\epsilon'')^{3/(3-q)}
        +C_{gn}(\epsilon')^{3/(3-q)}\Uppi_1\|\nabla \bm(\cdot,t)\|_{\bL^2}
        \Big),
\label{eq:uppi_tres}
\\
&&\Uppi_4=
\frac{C_{gn}C^{2,\infty}_{iny}\dot{\epsilon}}{\alpha}
 \| \bu(\cdot,t)\|_{\bH^2}\|\nabla\rho(\cdot,t)\|_{L^q}
 \quad\mbox{and}
 \label{eq:uppi_cuatro}
 \\
&&\Uppi_5=
\frac{C^{2,\infty}_{iny}}{\alpha}\Big(C_{gn}
 (\dot{\epsilon})^{3/(3-q)}\Uppi_1\| \bu(\cdot,t)\|_{\bH^2}
 \| \bm(\cdot,t)\|_{\bL^2} +
 C_{poi}\| \bm(\cdot,t)\|_{\bH^2}\|\nabla \bu(\cdot,t)\|_{\bL^2}
 \Big),\qquad
\label{eq:uppi_cinco}
\end{eqnarray}
such that the following estimates holds
\begin{eqnarray}
&&\|\nabla h(\cdot,t)\|_{\bL^2}\le
\Uppi_1\;\|\bm(\cdot,t)\|_{\bL^2},
\label{hx_and_rx_estimate}
\\
&&\|h (\cdot,t)\|_{H^2}
 \le 
 \Uppi_2+\Uppi_3 \|\nabla\rho(\cdot,t)\|^{q/(q-3)}_{\bL^q},
\label{h_and_r_estimate}
\\
&&\|\nabla h_t(\cdot,t)\|_{\bL^2}\le 
\frac{\beta}{\alpha}\|\bm_t(\cdot,t)\|_{\bL^2}
+\Uppi_4
 \Big(\Uppi_2+\Uppi_3 \|\nabla\rho(\cdot,t)\|^{q/(q-3)}_{\bL^q}\Big)
 +\Uppi_5 \|\nabla\rho(\cdot,t)\|_{\bL^q},
 \qquad
\label{ht_and_rt_estimate}
\end{eqnarray}
for all $t\in [0,T_*]$.
Hereafter, the notation $\|\cdot\|_{L^p}$ and $\|\cdot\|_{H^p}$   
are  used to abbreviate
the norm $\|\cdot\|_{{L^p}(\Omega)}$ and $\|\cdot\|_{{H^p}(\Omega)}$,
respectively.
\end{lemma}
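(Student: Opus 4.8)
The plan is to read off \eqref{hx_and_rx_estimate}, \eqref{h_and_r_estimate} and \eqref{ht_and_rt_estimate} directly from the elliptic Neumann problem \eqref{eq:helmholtz_f_dos}--\eqref{eq:helmholtz_f_cuatro} satisfied by $h(\cdot,t)$, using throughout the pointwise bounds $0<\alpha\le\rho\le\beta$ from Lemma~\ref{lema:aprioriestimatesforrho} and the inequalities \eqref{eq:spoingag}. For \eqref{hx_and_rx_estimate} I would test \eqref{eq:helmholtz_f_dos} against $h(\cdot,t)$ and integrate by parts over $\Omega$; by the Neumann condition \eqref{eq:helmholtz_f_tres} the two boundary integrals are equal and cancel, leaving $\int_\Omega\rho\,|\nabla h|^2\,d\bx=\int_\Omega\rho\,\bm\cdot\nabla h\,d\bx$. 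Then Cauchy--Schwarz followed by Young's inequality with the parameter $\epsilon$ and the bounds $\alpha\le\rho\le\beta$ give $(\alpha-\beta\epsilon/2)\|\nabla h\|_{\bL^2}^2\le\frac{\beta}{2\epsilon}\|\bm\|_{\bL^2}^2$; the restriction $\epsilon\in]0,2\alpha\beta^{-1}[$ makes the left coefficient positive, and solving for $\|\nabla h\|_{\bL^2}$ yields \eqref{hx_and_rx_estimate} with $\Uppi_1$ as in \eqref{eq:uppi_uno}.

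For \eqref{h_and_r_estimate} I would rewrite \eqref{eq:helmholtz_f_dos} in non-divergence form as $\Delta h=\diver\bm-\rho^{-1}\nabla\rho\cdot(\nabla h-\bm)$; together with $\partial h/\partial\bn=\bm\cdot\bn$ and the normalization \eqref{eq:helmholtz_f_cuatro} this is a scalar Neumann problem in which $\rho$ enters only through the right-hand side, so the $H^2$ elliptic regularity estimate applies with a constant $C^{reg}$ independent of $\rho$. The contribution of $\diver\bm$ is controlled by $\|\nabla\bm\|_{\bL^2}$, and that of $\rho^{-1}\nabla\rho\cdot\bm$ by H\"older's inequality together with the finiteness of $|\Omega|$, producing the $\alpha\|\nabla\bm\|_{\bL^2}$ and $|\Omega|^{(q-2)/q}\epsilon'\|\bm\|_{\bL^\infty}$ terms of $\Uppi_2$. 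The delicate term is $\rho^{-1}\nabla\rho\cdot\nabla h$: H\"older's inequality (exponents $q$ and $2q/(q-2)$) bounds it by $\alpha^{-1}\|\nabla\rho\|_{\bL^q}\|\nabla h\|_{\bL^{2q/(q-2)}}$, the Gagliardo--Nirenberg inequality of \eqref{eq:spoingag} gives $\|\nabla h\|_{\bL^{2q/(q-2)}}\le C_{gn}\|\nabla h\|_{\bL^2}^{1-3/q}\|h\|_{H^2}^{3/q}$, then \eqref{hx_and_rx_estimate} is inserted for $\|\nabla h\|_{\bL^2}$ and two applications of Young's inequality (parameters $\epsilon'$, $\epsilon''$, conjugate exponents $q/3$ and $q/(q-3)$, admissible since $q>3$) split off a term $\epsilon'\|h\|_{H^2}$, absorbed into the left-hand side because $\epsilon'<\alpha(C^{reg}C_{gn})^{-1}$, plus a term proportional to $\|\nabla\rho\|_{\bL^q}^{q/(q-3)}$. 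Collecting constants gives exactly $\Uppi_2+\Uppi_3\|\nabla\rho\|_{\bL^q}^{q/(q-3)}$ with $\Uppi_2,\Uppi_3$ as in \eqref{eq:uppi_dos}--\eqref{eq:uppi_tres}.

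For \eqref{ht_and_rt_estimate} I would differentiate \eqref{eq:helmholtz_f_dos}--\eqref{eq:helmholtz_f_tres} in $t$, so that $h_t$ solves $\diver(\rho\nabla h_t)=\diver(\rho\bm_t)+\diver\big(\rho_t(\bm-\nabla h)\big)$ with $\partial h_t/\partial\bn=\bm_t\cdot\bn$ and $\int_\Omega h_t\,d\bx=0$, and repeat the energy test of the first step: testing against $h_t$ again cancels the boundary integrals, and after Cauchy--Schwarz and division by $\|\nabla h_t\|_{\bL^2}$ one gets $\alpha\|\nabla h_t\|_{\bL^2}\le\beta\|\bm_t\|_{\bL^2}+\|\rho_t(\bm-\nabla h)\|_{\bL^2}$. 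Using the continuity equation \eqref{eq:ecuacion_continuidad} together with \eqref{eq:incompresibilidad} to write $\rho_t=-\bu\cdot\nabla\rho$, I would split $\|\rho_t(\bm-\nabla h)\|_{\bL^2}$ into $\|(\bu\cdot\nabla\rho)\nabla h\|_{\bL^2}$ and $\|(\bu\cdot\nabla\rho)\bm\|_{\bL^2}$. The first is estimated by H\"older's inequality ($\bL^\infty\times\bL^q\times\bL^{2q/(q-2)}$), the embedding constant $C^{2,\infty}_{iny}$ for $\|\bu\|_{\bL^\infty}$, the Gagliardo--Nirenberg inequality for $\|\nabla h\|_{\bL^{2q/(q-2)}}$, the already proven \eqref{hx_and_rx_estimate} and \eqref{h_and_r_estimate} for $\|\nabla h\|_{\bL^2}$ and $\|h\|_{H^2}$, and Young's inequality with the parameter $\dot\epsilon$; this reproduces $\Uppi_4\big(\Uppi_2+\Uppi_3\|\nabla\rho\|_{\bL^q}^{q/(q-3)}\big)$ together with the $\|\bm\|_{\bL^2}$-summand of $\Uppi_5\|\nabla\rho\|_{\bL^q}$. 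The second is estimated by H\"older's inequality ($\bL^{2q/(q-2)}\times\bL^q\times\bL^\infty$), the Poincar\'e inequality of \eqref{eq:spoingag} for $\|\bu\|_{\bL^{2q/(q-2)}}\le C_{poi}\|\nabla\bu\|_{\bL^2}$ (valid since $\bu\in\bH^1_0(\Omega)$) and $C^{2,\infty}_{iny}$ for $\|\bm\|_{\bL^\infty}$, which yields the remaining $C_{poi}\|\bm\|_{H^2}\|\nabla\bu\|_{\bL^2}$-summand of $\Uppi_5\|\nabla\rho\|_{\bL^q}$. Dividing by $\alpha$ and collecting constants gives \eqref{ht_and_rt_estimate} with $\Uppi_4,\Uppi_5$ as in \eqref{eq:uppi_cuatro}--\eqref{eq:uppi_cinco}.

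I expect the main obstacle to be the absorption argument in \eqref{h_and_r_estimate}: one must verify that the $H^2$-regularity constant for the Neumann Laplacian is genuinely independent of $\rho$, so that the smallness condition $\epsilon'<\alpha(C^{reg}C_{gn})^{-1}$ is meaningful, and then choose the Young exponents so that, after substituting \eqref{hx_and_rx_estimate}, only a single power $q/(q-3)$ of $\|\nabla\rho\|_{\bL^q}$ is created and no more than an $\epsilon'$-multiple of $\|h\|_{H^2}$ survives on the right-hand side. The same bookkeeping then has to be carried through the extra forcing $\rho_t(\bm-\nabla h)$ in \eqref{ht_and_rt_estimate} without generating further powers of $\|\nabla\rho\|_{\bL^q}$; once this is organized correctly, the remaining manipulations are routine uses of H\"older's inequality, \eqref{eq:spoingag} and Young's inequality.
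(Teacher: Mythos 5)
Your proposal is correct and follows essentially the same route as the paper: the energy test against $h$ with Young's parameter $\epsilon$ for \eqref{hx_and_rx_estimate}, the non-divergence rewriting $\Delta h=\diver\bm+\rho^{-1}\nabla\rho\cdot\bm-\rho^{-1}\nabla\rho\cdot\nabla h$ combined with $H^2$ elliptic regularity, Gagliardo--Nirenberg and two Young absorptions for \eqref{h_and_r_estimate}, and time-differentiation plus the substitution $\rho_t=-\bu\cdot\nabla\rho$ with the same H\"older splittings for \eqref{ht_and_rt_estimate}. No gaps; the bookkeeping of the constants $\Uppi_1,\ldots,\Uppi_5$ matches the paper's.
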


\begin{proof}
Form \eqref{eq:helmholtz_f_dos}, by applying 
Lemma~\ref{lema:aprioriestimatesforrho}, integration by parts, the boundary condition
\eqref{eq:helmholtz_f_tres}, and H\"older and Young inequalities, we
have that
\begin{eqnarray*}
\alpha\int_{\Omega}|\nabla h|^2(\bx,t)d\bx
&\le & \int_{\Omega}\Big(\rho|\nabla h|^2\Big)(\bx,t)d\bx
=\int_{\Omega}\Big(\rho\nabla h\cdot \bm\Big)(\bx,t)d\bx
\\
&\le & \beta\|\nabla h(\cdot,t)\|_{\bL^2}\|\bm(\cdot,t)\|_{\bL^2}
\le \frac{\epsilon\beta}{2}\|\nabla h(\cdot,t)\|^2_{\bL^2}
+\frac{\beta}{2\epsilon}\|\bm(\cdot,t)\|^2_{\bL^2},
\end{eqnarray*}
for each $t\in [0,T_*]$. Hence, we see that \eqref{hx_and_rx_estimate}
holds for $\Uppi_1$ defined by \eqref{eq:uppi_uno}.

Now, we can proceed to prove \eqref{h_and_r_estimate}. We start  recalling
the identities $\diver(\rho \nabla h)=\rho \Delta h+\nabla\rho\cdot\nabla h$ and
$\diver(\rho \bm)=\rho\diver(\bm)+\nabla\rho\cdot \bm$, which imply that 
the  equation  \eqref{eq:helmholtz_f_dos} can be rewritten as follows
\begin{eqnarray}
 \Delta h=\diver(\bm)+\frac{1}{\rho}\nabla\rho\cdot \bm
 -\frac{1}{\rho}\nabla\rho\cdot\nabla h.
 \label{eq:elliptic_helmholtz}
\end{eqnarray}
Clearly, by  the estimate \eqref{hx_and_rx_estimate} we deduce
that the right hand side of \eqref{eq:elliptic_helmholtz}
belongs to $L^2(\Omega)$. Then, by
the regularity of solutions for \eqref{eq:elliptic_helmholtz},
the inequality \eqref{eq:spoingag},
Lemma~\ref{lema:aprioriestimatesforrho}
and  the estimate \eqref{hx_and_rx_estimate}, 
we can follow that there exists $C_{reg}>0$ independent of $h$ such that 
the following bound
\begin{eqnarray*}
&& \|h (\cdot,t)\|_{H^2}
 \le C_{reg}
  \left\|\left(\diver(\bm)+\frac{1}{\rho}\nabla\rho\cdot \bm
  -\frac{1}{\rho}\nabla\rho\cdot\nabla h\right)(\cdot,t)\right\|_{\bL^2}
  \\
  &&\hspace{1cm}
  \le C_{reg}\Big\{
  \|\nabla \bm(\cdot,t)\|_{\bL^2}
  +\frac{1}{\alpha}\| \bm(\cdot,t)\|_{L^{\infty}}\|\nabla\rho(\cdot,t)\|_{\bL^2}
  +\frac{1}{\alpha}\|\nabla\rho(\cdot,t)\|_{\bL^q}\|\nabla h(\cdot,t)\|_{\bL^{2q/(q-2)}}
  \Big\}
  \\
  &&\hspace{1cm}
  \le
  C_{reg}\Big\{
  \|\nabla \bm(\cdot,t)\|_{\bL^2}
  +\frac{|\Omega|^{\frac{q-2}{2q}}}{\alpha}\| \bm(\cdot,t)\|_{\bL^{\infty}}
  \|\nabla\rho(\cdot,t)\|_{\bL^q}
  \\
  &&\hspace{2.5cm}
  +\frac{C_{gn}}{\alpha}\|\nabla\rho(\cdot,t)\|_{\bL^q}\|
     \|\nabla h(\cdot,t)\|^{1-3/q}_{\bL^2}\|h(\cdot,t)\|^{3/q}_{H^2}
     \Big\}
  \\
  &&\hspace{1cm}
  \le
  C_{reg}\Big\{
  \|\nabla \bm(\cdot,t)\|_{\bL^2}
  +\frac{|\Omega|^{\frac{q-2}{2q}}}{\alpha}\| \bm(\cdot,t)\|_{\bL^{\infty}}
  \|\nabla\rho(\cdot,t)\|_{\bL^q}
  \\
  &&\hspace{2.5cm}
  +\frac{C_{gn}(\Uppi_1)^{1-3/q}}{\alpha}\;
  \|\bm(\cdot,t)\|^{1-3/q}_{\bL^2}
  \|\nabla\rho(\cdot,t)\|_{\bL^q}\|h(\cdot,t)\|^{3/q}_{H^2}\Big\},
\end{eqnarray*}
holds for each $t\in [0,T_*]$. Thus, by the application
of two times of the Young's inequality we complete the proof
of \eqref{h_and_r_estimate} with $\Uppi_2$ of the form
given in \eqref{eq:uppi_dos}.

The proof of \eqref{ht_and_rt_estimate} is given as follows.
Taking $\partial_t$ to 
the first equation of \eqref{eq:helmholtz_f_dos}, testing the result by $h_t$,
using the estimate of Lemma~\ref{lema:aprioriestimatesforrho},
the H\"older inequality, the equation \eqref{eq:ecuacion_continuidad}
and inequality \eqref{eq:spoingag},
we have that
\begin{eqnarray*}
&&\alpha\|\nabla h_t(\cdot,t)\|^2_{\bL^2}
\le 
 \int_{\Omega} \rho|\nabla h_t (\cdot,t)|^2d\bx
 \\
&& \qquad
 \le 
 \left|\int_{\Omega} \Big(\rho \bm_t \cdot \nabla h_t\Big)(\cdot,t)d\bx\right|
 +\left|\int_{\Omega} \Big(\rho_t \nabla h\cdot \nabla h_t\Big)(\cdot,t)d\bx\right|
 +\left|\int_{\Omega} \Big(\rho_t   \bm\cdot \nabla h_t\Big)(\cdot,t)d\bx\right|
 \\
&& \qquad
 \le 
 \|\nabla\rho(\cdot,t)\|_{\bL^\infty}\|\bm_t(\cdot,t)\|_{\bL^2}
 \|\nabla h_t(\cdot,t)\|_{\bL^2}
 +\left|\int_{\Omega} \Big((\bu\cdot\nabla\rho) 
 \nabla h\cdot \nabla h_t\Big)(\cdot,t)d\bx\right|
 \\
 &&\qquad\quad
 +\left|\int_{\Omega} \Big((\bu\cdot\nabla\rho)
      \bm\cdot \nabla h_t\Big)(\cdot,t)d\bx\right|
 \\ 
 &&\qquad
  \le 
 \beta\|\bm_t(\cdot,t)\|_{\bL^2}\|\nabla h_t(\cdot,t)\|_{\bL^2}
 +\| \bu(\cdot,t)\|_{\bL^\infty}\|\nabla\rho(\cdot,t)\|_{\bL^q}
 \|\nabla h(\cdot,t)\|_{\bL^{2q/(q-2)}}\|\nabla h_t(\cdot,t)\|_{\bL^2}
  \\ 
 &&\qquad\quad
 +\| \bm(\cdot,t)\|_{\bL^\infty}\|\nabla\rho(\cdot,t)\|_{\bL^q}
 \|\bu(\cdot,t)\|_{\bL^{2q/(q-2)}}\|\nabla h_t(\cdot,t)\|_{\bL^2}
 \\
&& \qquad
  \le 
 \|\nabla h_t(\cdot,t)\|_{\bL^2}\Big\{\beta\|\bm_t(\cdot,t)\|_{\bL^2}
 +C_{gn}\| \bu(\cdot,t)\|_{\bL^\infty}\|\nabla\rho(\cdot,t)\|_{\bL^q}
 \|\nabla h(\cdot,t)\|^{1-3/q}_{\bL^{2}}\|h(\cdot,t)\|^{3/q}_{H^{2}}
  \\ 
&& \qquad\quad
 +\| \bm(\cdot,t)\|_{\bL^\infty}\|\nabla\rho(\cdot,t)\|_{\bL^q}
 \|\bu(\cdot,t)\|_{\bL^{2q/(q-2)}}\Big\}.
\end{eqnarray*}
Then, by \eqref{eq:spoingag}, the continuous inclusion of $H^2$ in $ L^\infty$, 
 and the Young inequality, we obtain
\begin{eqnarray*}
\|\nabla h_t(\cdot,t)\|_{\bL^2} 
  \le 
\frac{\beta}{\alpha}\|\bm_t(\cdot,t)\|_{\bL^2}
 +\frac{C_{gn} C^{2,\infty}_{iny}\dot{\epsilon}}{\alpha}
 \| \bu(\cdot,t)\|_{\bH^2}\|\nabla\rho(\cdot,t)\|_{\bL^q}
 \|h(\cdot,t)\|_{H^{2}}+\frac{C^{2,\infty}_{iny}}{\alpha}\times
 \\
 \hspace{1cm}
 \Big(C_{gn}(\dot{\epsilon})^{3/(3-q)}
 \| \bu(\cdot,t)\|_{\bH^2}\|\nabla h(\cdot,t)\|_{\bL^q}
 +C_{poi}\dot{\epsilon}
 \| \bm(\cdot,t)\|_{\bH^2}\|\nabla \bu(\cdot,t)\|_{\bL^2}\Big)
 \|\nabla\rho(\cdot,t)\|_{\bL^q},
\end{eqnarray*}
which implies \eqref{ht_and_rt_estimate} by 
straightforward application of \eqref{hx_and_rx_estimate} and
\eqref{h_and_r_estimate}.
\end{proof}

\begin{lemma}
\label{prop:auxiliar_stokes_elliptic_results}
There exists  $\Upsilon_i\in \R^+$ for $i\in\{1,\ldots,5\}$,
depending only on $\Omega,c_a,c_0,c_d,\alpha,\beta$ and $\mu_r$
(independent of $f$ and $g$), such that the following estimate holds:
\begin{eqnarray} 
&&\Upsilon_1\|\bu(\cdot,t)\|_{\bH^2}+\|p(\cdot,t)\|_{H^2}+\Upsilon_1\|\bw(\cdot,t)\|_{\bH^2}
\nonumber
\\
&&
\quad
\le
\Upsilon_2
\Big[\|\nabla \bu(\cdot,t)\|^{3}_{\bL^2}+\|\nabla \bw(\cdot,t)\|^{3}_{\bL^2}\Big]
+
\Upsilon_3
\Big[\|\nabla \bw(\cdot,t)\|_{\bL^2}+\|\nabla \bu(\cdot,t)\|_{\bL^2}\Big]
\nonumber
\\
&&
\qquad
+\Upsilon_4
\Big[|f(t)|\|\bm(\cdot,t)\|_{\bL^2(\Omega)}+|g(t)|\|\bq(\cdot,t)\|_{\bL^2(\Omega)}\Big]
\nonumber
\\
&&
\qquad
+
\Upsilon_5
\Big[
\|\big(\sqrt{\rho} \bu_t\big)(\cdot,t)\|_{\bL^2}
+\|\big(\sqrt{\rho} \bw_t\big)(\cdot,t)\|_{\bL^2}
\Big],
\label{eq:regularity_u_p_w}
\end{eqnarray}
for all $t\in[0,T_*]$.
\end{lemma}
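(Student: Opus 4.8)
The plan is to treat the velocity equation \eqref{eq:momento_lineal} and the angular-velocity equation \eqref{eq:momento_angular} as \emph{stationary} Stokes- and elliptic-type problems at each fixed time $t$, moving all time-derivative, convective, coupling, and source terms to the right-hand side, and then to invoke the standard $H^2$-regularity estimates for the Stokes operator $A$ and for the uniformly elliptic operator $L$ (both of which are available in the functional framework set up in subsection~\ref{subsec:fucional_framework}). Writing \eqref{eq:momento_lineal} in the form
\begin{eqnarray*}
-(\mu+\mu_r)\Delta\bu+\nabla p
=-(\rho\bu)_t-\diver(\rho\bu\otimes\bu)+2\mu_r\curl\bw+\rho\bF,
\end{eqnarray*}
the Cattabriga–Solonnikov estimate for the stationary Stokes system gives
$\|\bu(\cdot,t)\|_{\bH^2}+\|p(\cdot,t)\|_{H^1}\le C\|(\text{RHS})(\cdot,t)\|_{\bL^2}$, and similarly $L^2$-elliptic regularity for $L$ gives $\|\bw(\cdot,t)\|_{\bH^2}\le C\|(\text{RHS of \eqref{eq:momento_angular}})(\cdot,t)\|_{\bL^2}$. (The appearance of $\|p(\cdot,t)\|_{H^2}$ rather than $H^1$ on the left of \eqref{eq:regularity_u_p_w} means one differentiates the Stokes system once more in space, absorbing one further derivative of the data; this is a routine bootstrap I will carry out but not detail here.)

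\emph{The core of the argument} is then to estimate each term on the right-hand side in the $\bL^2$ norm. First, $\|(\rho\bu)_t\|_{\bL^2}=\|\rho_t\bu+\rho\bu_t\|_{\bL^2}$: using Lemma~\ref{lema:aprioriestimatesforrho} to bound $\rho\le\beta$, this contributes a term $\lesssim\|\sqrt{\rho}\,\bu_t\|_{\bL^2}$ (after another use of $\rho\ge\alpha$) plus $\|\rho_t\|_{L^q}\|\bu\|_{L^{2q/(q-2)}}$, and the latter is handled by the Gagliardo–Nirenberg inequality \eqref{eq:spoingag} to trade $\|\bu\|_{L^{2q/(q-2)}}$ for $\|\nabla\bu\|_{\bL^2}^{1-3/q}\|\bu\|_{\bH^2}^{3/q}$, with the $\bH^2$ factor absorbed into the left side by Young's inequality (choosing the exponent $3/q<1$). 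Second, the convective term: $\|\diver(\rho\bu\otimes\bu)\|_{\bL^2}\lesssim\|\bu\|_{L^\infty}\|\nabla\bu\|_{\bL^2}+\|\nabla\rho\|_{L^q}\|\bu\|_{L^{2q/(q-2)}}^2$, and $\|\bu\|_{L^\infty}\lesssim\|\bu\|_{\bH^2}$ via the embedding $H^2\hookrightarrow L^\infty$; again Gagliardo–Nirenberg plus Young absorbs the $\bH^2$ contributions, leaving the cubic term $\|\nabla\bu\|_{\bL^2}^3$ that appears with coefficient $\Upsilon_2$. Third, the coupling $2\mu_r\curl\bw$ contributes $\lesssim\|\nabla\bw\|_{\bL^2}$ (coefficient $\Upsilon_3$), and symmetrically $2\mu_r\curl\bu$ in the $\bw$-equation contributes $\lesssim\|\nabla\bu\|_{\bL^2}$. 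Fourth, the source term $\rho\bF=\rho f(\nabla h-\bm)$ is bounded using $\rho\le\beta$, the triangle inequality, and the elliptic estimate $\|\nabla h(\cdot,t)\|_{\bL^2}\le\Uppi_1\|\bm(\cdot,t)\|_{\bL^2}$ from Lemma~\ref{lema:aprioriestimatesforhandr}; this gives $\lesssim|f(t)|\,\|\bm(\cdot,t)\|_{\bL^2}$ (coefficient $\Upsilon_4$), and analogously $\rho\bG=\rho g\bq$ gives $\lesssim|g(t)|\,\|\bq(\cdot,t)\|_{\bL^2}$. The $\bw$-equation additionally has the zeroth-order term $4\mu_r\bw$, bounded by $\|\bw\|_{\bL^2}\lesssim\|\nabla\bw\|_{\bL^2}$ via Poincaré \eqref{eq:spoingag}.

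\emph{Collecting} these bounds, all genuinely $\bH^2$-order contributions on the right sit with small constants (from Young's inequality) and can be moved to the left, after which one reads off \eqref{eq:regularity_u_p_w} with $\Upsilon_1$ a small fixed positive number and $\Upsilon_2,\dots,\Upsilon_5$ depending only on $\Omega$, the viscosities $c_a,c_0,c_d,\mu_r$, and the density bounds $\alpha,\beta$ — crucially \emph{not} on $f,g$, since the $f,g$-dependence has been isolated into the $\Upsilon_4$ bracket. \textbf{The main obstacle} is the careful handling of the variable-density terms $(\rho\bu)_t$ and $\diver(\rho\bu\otimes\bu)$: one must route the worst factors through Gagliardo–Nirenberg with the precise exponent $2q/(q-2)$ (with $q\in(2,3)$ so that $3/q<1$) so that the $\bH^2$-norm always appears to a power strictly less than one and can be absorbed; getting the bookkeeping of these interpolation exponents right, while keeping the $\|\sqrt{\rho}\,\bu_t\|_{\bL^2}$ and $\|\sqrt{\rho}\,\bw_t\|_{\bL^2}$ combinations (rather than plain $\|\bu_t\|_{\bL^2}$) on the right so that the later energy estimates close, is the delicate part. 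Everything else is a direct application of standard linear elliptic/Stokes regularity together with Lemmas~\ref{lema:aprioriestimatesforrho} and~\ref{lema:aprioriestimatesforhandr}.
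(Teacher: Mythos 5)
Your overall strategy (freeze $t$, read \eqref{eq:momento_lineal} as a stationary Stokes system and \eqref{eq:momento_angular} as a uniformly elliptic problem for $L$, apply $L^2$ regularity, and absorb the $\bH^2$ contributions of the nonlinear terms via Young) is exactly the paper's, and your treatment of the coupling terms, of $4\mu_r\bw$, and of the sources $\rho f(\nabla h-\bm)$ and $\rho g\bq$ through $\|\nabla h\|_{\bL^2}\le\Uppi_1\|\bm\|_{\bL^2}$ matches the paper. However, two of your estimates, as written, do not close.

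First, you keep the conservative form $(\rho\bu)_t+\diver(\rho\bu\otimes\bu)$ on the right-hand side and estimate $\rho_t\bu$ and the density-gradient part of the convective term separately, through $\|\rho_t\|_{L^q}$ and $\|\nabla\rho\|_{L^q}$. Those quantities are not among the allowed dependencies of the $\Upsilon_i$ (which may depend only on $\Omega,c_a,c_0,c_d,\alpha,\beta,\mu_r$), and they are not controlled at this stage of the paper: they are bounded only in Theorem~\ref{teo:global_estimates}, whose proof uses the present lemma, so your route is circular. The cure is to note that, by \eqref{eq:ecuacion_continuidad} and \eqref{eq:incompresibilidad}, one has $(\rho\bu)_t+\diver(\rho\bu\otimes\bu)=\rho\bu_t+\rho(\bu\cdot\nabla)\bu$ exactly — the terms $\rho_t\bu$ and $(\bu\cdot\nabla\rho)\bu$ cancel — so no norm of $\rho_t$ or $\nabla\rho$ ever enters; this is precisely how the paper sets up \eqref{eq:stokes_problem}.

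Second, your bound $\|\diver(\rho\bu\otimes\bu)\|_{\bL^2}\lesssim\|\bu\|_{\bL^\infty}\|\nabla\bu\|_{\bL^2}+\cdots$ followed by $\|\bu\|_{\bL^\infty}\lesssim\|\bu\|_{\bH^2}$ produces the term $\|\bu\|_{\bH^2}\|\nabla\bu\|_{\bL^2}$, in which $\|\bu\|_{\bH^2}$ appears to the first power; Young's inequality cannot absorb this into the linear term $\Upsilon_1\|\bu\|_{\bH^2}$ on the left unless $\|\nabla\bu\|_{\bL^2}$ is known to be small, and it certainly does not leave the cubic $\|\nabla\bu\|^{3}_{\bL^2}$ you claim. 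You must split the other way, $\|\rho(\bu\cdot\nabla)\bu\|_{\bL^2}\le\beta\|\bu\|_{\bL^6}\|\nabla\bu\|_{\bL^3}$, and use \eqref{eq:spoingag} to get $\lesssim\|\nabla\bu\|^{3/2}_{\bL^2}\|\bu\|^{1/2}_{\bH^2}$; only because the exponent on $\|\bu\|_{\bH^2}$ is $1/2<1$ does Young absorb it and leave $\|\nabla\bu\|^{3}_{\bL^2}$. Finally, a caution on the pressure: Stokes regularity with $\bL^2$ data yields $p\in H^1$, and the ``routine bootstrap'' to $\|p\|_{H^2}$ you allude to would require the right-hand side in $\bH^1$, i.e.\ control of $\nabla(\rho\bu_t)$ and the like, which is unavailable; the $H^2$ norm of $p$ in \eqref{eq:regularity_u_p_w} appears to be a slip in the paper itself (only $\|p\|_{H^1}$ is used later, cf.\ \eqref{teo:global_estimates_kapa9_10}), so you should not attempt to prove it.
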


\begin{proof}
The inequality  \eqref{eq:regularity_u_p_w} is a consequence of the regularity
of solutions for the Stokes system satisfied by $u$ and $p$ and the uniformly
elliptic  equation satisfied by $w$. Indeed, we first note that
the equations \eqref{eq:momento_lineal},
\eqref{eq:incompresibilidad} and \eqref{eq:helmholtz_f_uno}
imply that $\bu$ and $p$ satisfy the Stokes problem given by the equation
\begin{eqnarray}
-(\mu +\mu_r)\Delta \bu+\nabla p
	=2\mu_r \curl \bw+\rho f(\nabla h-\bm)
	-\rho \bu_t-\rho (\bu\cdot\nabla )\bu,
	\quad\mbox{in}\quad Q_T,
\label{eq:stokes_problem}
\end{eqnarray}
where the incompressibility condition is given by  \eqref{eq:incompresibilidad}
and the initial and boundary conditions are given by
\eqref{eq:direct_problem_ic} and \eqref{eq:direct_problem_bc}, respectively. 
Hence, by applying the result given in \cite{temam_1977}
for the regularity of the 
solutions for stokes equation, 
the Minkowski and H\"older  inequalities, we deduce that 
\begin{eqnarray}
&& \|\bu(\cdot,t)\|_{\bH^2}+\|p(\cdot,t)\|_{H^2}
\nonumber
\\
&&
\qquad
\le C^{reg}_1\; \Big[
2\mu_r \|\curl \bw(\cdot,t)\|_{\bL^2}
+\|\big(\rho f(\nabla h-\bm)\big)(\cdot,t)\|_{\bL^2}
+\|(\rho \bu_t)(\cdot,t)\|_{\bL^2}
\nonumber
\\
&&
\qquad\quad
+\|\rho ((\bu\cdot\nabla )\bu)(\cdot,t)\|_{\bL^2}\Big]
\nonumber
\\
&&
\qquad
\le C^{reg}_1\; \Big[
2\mu_r \|\nabla \bw(\cdot,t)\|_{\bL^2}
+|f(t)|\|\rho (\cdot,t)\|_{L^2}
\Big(\|\nabla h(\cdot,t)\|_{\bL^2}+\|\bm(\cdot,t)\|_{\bL^2}\Big)
\nonumber
\\
&&
\qquad\quad
+\|(\rho \bu_t)(\cdot,t)\|_{\bL^2}
+\|\rho(\cdot,t)\|_{L^2}\|\bu(\cdot,t)\|_{\bL^6}\|\nabla \bu(\cdot,t)\|_{\bL^3}\Big],
\label{eq:stokes_for_uuu}
\end{eqnarray}
where $C^{reg}_1$ is a positive constant depending on $\mu,\mu_r$
and $\Omega$. In the second place,
by \eqref{eq:momento_angular}, \eqref{eq:helmholtz_f_uno} and \eqref{eq:def_of_L_and_L0},
we deduce that $\bw$ satisfies the following equation
\begin{eqnarray}
L\bw
	=2\mu_r \curl \bu+\rho g \bq
	-\rho \bu_t-\rho (\bw\cdot\nabla )\bw,
	\quad\mbox{in}\quad Q_T.
\end{eqnarray}
Then by the regularity results for  the 
solutions for uniformly elliptic  equations (see for instance \cite{evans_book}), 
the Minkowski and H\"older  inequalities, and \eqref{eq:spoingag}, we have that 
\begin{eqnarray}
 \|\bw(\cdot,t)\|_{\bH^2}
&\le& C^{reg}_2\; \Big[
2\mu_r \|\curl \bu(\cdot,t)\|_{\bL^2}
+\|\big(\rho g\bq\big)(\cdot,t)\|_{\bL^2}
+\|(\rho \bw_t)(\cdot,t)\|_{\bL^2}
\nonumber
\\
&&
+\|\rho ((\bw\cdot\nabla )\bw)(\cdot,t)\|_{\bL^2}
+\|\bw(\cdot,t)\|_{\bL^2}\Big]
\nonumber
\\
&\le& C^{reg}_2\; \Big[
2\mu_r \|\nabla \bu(\cdot,t)\|_{\bL^2}
+|g(t)|\|\rho (\cdot,t)\|_{L^2}\|\bq(\cdot,t)\|_{\bL^2}
+\|(\rho \bw_t)(\cdot,t)\|_{\bL^2}
\nonumber
\\
&&
+\|\rho(\cdot,t)\|_{L^2}\|\bw(\cdot,t)\|_{\bL^6}\|\nabla\b w(\cdot,t)\|_{\bL^3}
+C_{poi}\|\nabla \bw(\cdot,t)\|_{\bL^2}\Big],
\label{eq:regularity_for_Lw}
\end{eqnarray}
where $C^{reg}_2$ is a positive constant depending only on $\Omega$ and 
on the coefficients of $L$.
Now, we note that the second terms on the right hand sides 
of \eqref{eq:stokes_for_uuu}
and \eqref{eq:regularity_for_Lw} can be bound
by application of 
Lemmas~\ref{lema:aprioriestimatesforrho}-\ref{lema:aprioriestimatesforhandr}
and \eqref{eq:spoingag}.
Hence, if we sum the bounded results,  we
obtain the following inequality
\begin{eqnarray}
&& \|\bu(\cdot,t)\|_{\bH^2}+\|p(\cdot,t)\|_{H^2}+\|\bw(\cdot,t)\|_{\bH^2}
\nonumber
\\&&
\qquad
\le
C_M\;\Big\{
(2\mu_r +C_{poi}) \;\|\nabla \bw(\cdot,t)\|_{\bL^2}+2\mu_r\|\nabla \bu(\cdot,t)\|_{\bL^2}
\nonumber
\\&&
\qquad\quad
+\beta |\Omega|^{1/2}
\Big(\Uppi_1+1\Big)
\;
\Big[|f(t)|\|\bm(\cdot,t)\|_{\bL^2}+|g(t)|\|\bq(\cdot,t)\|_{\bL^2}\Big]
\nonumber
\\&&
\qquad\quad
+
(\beta)^{1/2}
\Big[
\|\big(\sqrt{\rho} \bu_t\big)(\cdot,t)\|_{\bL^2}
+\|\big(\sqrt{\rho} \bw_t\big)(\cdot,t)\|_{\bL^2}
\Big]\nonumber
\\&&
\qquad\quad
+
\beta |\Omega|^{1/2}C_{gn}C_{poi}\;
\Big[\|\nabla \bu(\cdot,t)\|^{3/2}_{\bL^2}\|\bu(\cdot,t)\|^{1/2}_{\bH^2}
+\|\nabla \bw(\cdot,t)\|^{3/2}_{\bL^2}\|\bw(\cdot,t)\|^{1/2}_{\bH^2}\Big]\Big\},
\label{eq:regularity_u_p_w_prev}
\end{eqnarray}
for all $t\in [0,T_*]$ with
$C_M=\max\{C^{reg}_1,C^{reg}_2\}$. 
Now, for $\epsilon^*\in\mathbb{R}^+$ we define $\Upsilon_i$ for $i=1,\ldots,5$
as follows
\begin{eqnarray*}
&&\Upsilon_1=(\epsilon^*)^{-2}\Big((\epsilon^*)^2-\Upsilon_2\Big),
\quad
\Upsilon_2=2^{-1}\beta |\Omega|^{1/2}C_{gn}C_{poi}C_M\;\epsilon^*,
\quad
\Upsilon_3=(2\mu_r +C_3)C_M,
\\&&
\Upsilon_4=\beta |\Omega|^{1/2}(\Uppi_1+1)C_M,
\quad
\Upsilon_5=(\beta)^{1/2}C_M.
\end{eqnarray*}
Thus, selecting $\epsilon^*$ such that $(\epsilon^*)^2>\Upsilon_2$
and applying the Young inequality to the last two terms
of \eqref{eq:regularity_u_p_w_prev},
we get \eqref{eq:regularity_u_p_w}.
\end{proof}

\begin{lemma}
\label{lem:u_ut_w_wt_estimate}
There exists $T_1\in [0,T_*]$  and $\Uptheta:[0,T_1]\to\R^+$ independients of $f$ and $g$ such that
the following estimate holds 
\begin{eqnarray}
\|\bu(\cdot,t)\|_{\bH^1_0}+
\|\bw(\cdot,t)\|_{\bH^1_0}\le \Uptheta(t),
\label{eq:lem:u_ut_w_wt_estimate}
\end{eqnarray}
 for all $t\in [0,T_1].$
\end{lemma}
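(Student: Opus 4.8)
The plan is to derive a closed differential inequality for the quantity $y(t):=\|\nabla\bu(\cdot,t)\|_{\bL^2}^2+\|\nabla\bw(\cdot,t)\|_{\bL^2}^2$ (equivalently $\|\bu\|_{\bH^1_0}^2+\|\bw\|_{\bH^1_0}^2$, by Poincaré) and then apply a comparison argument to produce $\Uptheta$ and the local time $T_1$. First I would test the variational identity \eqref{eq:formulacion_variacional_u} with $\bv=\bu_t$ (legitimate since $\bu_t\in\bH$ and, by the regularity \eqref{eq:reg_of_u}, $\bu\in D(A)$), and likewise test \eqref{eq:formulacion_variacional_w} with $\bvarphi=\bw_t$. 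Using $(\rho\bu)_t=\rho\bu_t+\rho_t\bu$ and $\rho_t=-\bu\cdot\nabla\rho$ together with Lemma \ref{lema:aprioriestimatesforrho} ($0<\alpha\le\rho\le\beta$), the principal terms $(\mu+\mu_r)(A\bu,\bu_t)$ and $(L\bw,\bw_t)$ become $\tfrac{\mu+\mu_r}{2}\tfrac{d}{dt}\|\nabla\bu\|_{\bL^2}^2$ and a time derivative of a coercive quadratic form in $\nabla\bw$ (here the assumption $c_0+c_d>c_a$ guarantees coercivity of $L$, cf. \eqref{eq:def_of_L_and_L0}), so that $\tfrac{d}{dt}y(t)$ appears with a good sign on the left, along with the dissipation terms $\alpha\|\sqrt\rho\,\bu_t\|_{\bL^2}^2+\alpha\|\sqrt\rho\,\bw_t\|_{\bL^2}^2$.

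Next I would estimate the right-hand side. The coupling terms $2\mu_r(\curl\bw,\bu_t)$ and $2\mu_r(\curl\bu,\bw_t)$ are bounded by $\varepsilon(\|\bu_t\|_{\bL^2}^2+\|\bw_t\|_{\bL^2}^2)+C_\varepsilon\,y(t)$. The forcing terms $(\rho\bF,\bu_t)$ and $(\rho\bG,\bw_t)$ are, via \eqref{eq:helmholtz_f_uno}, equal to $f(t)(\rho(\nabla h-\bm),\bu_t)$ and $g(t)(\rho\bq,\bw_t)$; using $\|f\|_{H^1(0,T)}\le\upeta$ hence $\|f\|_{C([0,T])}\le C\upeta$ (Sobolev embedding in one variable), Lemma \ref{lema:aprioriestimatesforrho}, Lemma \ref{lema:aprioriestimatesforhandr} for $\|\nabla h\|_{\bL^2}$, and the given regularity of $\bm,\bq$, these are controlled by $\varepsilon(\|\bu_t\|_{\bL^2}^2+\|\bw_t\|_{\bL^2}^2)+C(\upeta)$. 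The genuinely nonlinear terms are the convective ones $\int_\Omega\rho\,\bu\otimes\bu:\nabla\bu_t$ and $\int_\Omega\rho\,\bw\otimes\bw:\nabla\bw_t$. I would handle them by integrating by parts in time is not available pointwise, so instead I would rewrite $\int_\Omega\rho(\bu\cdot\nabla)\bu\cdot\bu_t\,d\bx$ and bound it through H\"older with exponents $(6,3,2)$, the embedding $\bH^1_0\hookrightarrow\bL^6$, and $\|\nabla\bu\|_{\bL^3}\le C\|\nabla\bu\|_{\bL^2}^{1/2}\|\bu\|_{\bH^2}^{1/2}$ from \eqref{eq:spoingag}; then $\|\bu\|_{\bH^2}$ is absorbed using Lemma \ref{prop:auxiliar_stokes_elliptic_results}, which bounds $\|\bu\|_{\bH^2}+\|\bw\|_{\bH^2}$ by $C(y(t)^{3/2}+y(t)^{1/2}+\upeta+\|\sqrt\rho\,\bu_t\|_{\bL^2}+\|\sqrt\rho\,\bw_t\|_{\bL^2})$. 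This is the delicate point: after Young's inequality the $\bu_t,\bw_t$ contributions coming through Lemma \ref{prop:auxiliar_stokes_elliptic_results} must be absorbed by the dissipative $\alpha\|\sqrt\rho\,\bu_t\|^2$ term on the left (possible for $\varepsilon$ small), while the remaining algebraic dependence on $y$ produces a superlinear polynomial, schematically
\begin{eqnarray*}
\frac{d}{dt}y(t)\le C_1\big(1+\upeta^2\big)+C_2\,P\big(y(t)\big),
\end{eqnarray*}
where $P$ is a fixed polynomial (of degree governed by the $y^{3/2}$ growth in Lemma \ref{prop:auxiliar_stokes_elliptic_results} squared, i.e.\ degree $\le 4$) with coefficients independent of $f,g$.

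Finally I would invoke a standard ODE comparison: since $y(0)=\|\nabla\bu_0\|_{\bL^2}^2+\|\nabla\bw_0\|_{\bL^2}^2$ is finite (hypotheses (H$_2$), (H$_3$)), the maximal solution $\bar y$ of $\bar y'=C_1(1+\upeta^2)+C_2 P(\bar y)$, $\bar y(0)=y(0)$, exists on some interval $[0,T_1]$ with $T_1$ depending only on $y(0)$, $\upeta$, and the fixed constants — hence independent of the particular $f,g$ — and $y(t)\le\bar y(t)$ there. Setting $\Uptheta(t):=\sqrt{C_{poi}}\,\bar y(t)^{1/2}$ (absorbing Poincaré to pass from $\|\nabla\cdot\|_{\bL^2}$ to $\|\cdot\|_{\bH^1_0}$) gives \eqref{eq:lem:u_ut_w_wt_estimate}. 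The main obstacle, as indicated, is the bookkeeping in the convective terms: one must be careful that the only route by which $\|\bu\|_{\bH^2},\|\bw\|_{\bH^2}$ enters is linear in those norms (so Young's inequality does not generate an unabsorbable $\|\bu_t\|^2$ with too large a constant), which is exactly why Lemma \ref{prop:auxiliar_stokes_elliptic_results} was stated with the $\sqrt\rho\,\bu_t$ terms carrying the small coefficient $\Upsilon_5$ and why $\varepsilon$ must be chosen after fixing $\Upsilon_5$.
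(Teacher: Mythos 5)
Your proposal is correct and follows essentially the same route as the paper: test the momentum equations with $\bu_t$ and $\bw_t$, control the convective terms via the $\bL^6$--$\bL^3$--$\bL^2$ H\"older split, the Gagliardo--Nirenberg interpolation $\|\nabla\bu\|_{\bL^3}\lesssim\|\nabla\bu\|_{\bL^2}^{1/2}\|\bu\|_{\bH^2}^{1/2}$ and Lemma~\ref{prop:auxiliar_stokes_elliptic_results}, absorb the resulting $\|\sqrt{\rho}\,\bu_t\|_{\bL^2}^2$, $\|\sqrt{\rho}\,\bw_t\|_{\bL^2}^2$ contributions into the dissipation, and close with a superlinear differential inequality and a local ODE comparison (the paper cites Heywood's Lemma~3 for this last step, and keeps $|f(t)|^2+|g(t)|^2$ as an $L^1$ forcing rather than bounding it pointwise by $\upeta$, but these are cosmetic differences). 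The only inessential discrepancy is your degree count: the paper's inequality \eqref{lem:u_ut_w_wt_main_estimate} is cubic in $y$ (i.e.\ $\|\nabla\bu\|_{\bL^2}^{6}$), not quartic, which changes nothing in the argument.
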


\begin{proof}
Testing the equations \eqref{eq:momento_lineal} and \eqref{eq:momento_angular} by $\bu_t$ 
and $\bw_t$, respectively;  
 summing the results; and applying the Minkowski and H\"older inequalities, we get
 \begin{eqnarray}
&&\frac{(\mu+\mu_r)}{2}\frac{d}{dt}\int_{\Omega}|\nabla \bu(\bx,t)|^2d\bx
+\frac{(c_0+2c_d)}{2}\frac{d}{dt}\int_{\Omega}|\nabla \bw(\bx,t)|^2d\bx
+\int_{\Omega}\Big(\rho |\bu_t|^2\Big)(\bx,t)d\bx
\nonumber\\&&
\qquad
+\int_{\Omega}\Big(\rho |\bw_t|^2\Big)(\bx,t)d\bx
=-\int_{\Omega}\Big(\rho (\bu\cdot\nabla )\bu\cdot\bu_t\Big)(\bx,t) d\bx
+2\mu_r\int_{\Omega}\Big(\curl \bw\,\cdot \bu_t\Big)(\bx,t) d\bx
\nonumber\\&&
\qquad
+\mu_r f(t)\int_{\Omega}\Big(\rho(\nabla h-\bm)\cdot\bu_t\Big)(\bx,t) d\bx
-\int_{\Omega}\Big(\rho (\bw\cdot\nabla )\bw\cdot\bw_t\Big)(\bx,t) d\bx
\nonumber\\&&
\qquad
+2\mu_r\int_{\Omega}\Big(\curl \bu\;\cdot \bw_t \Big)(\bx,t) d\bx+\mu_r g(t)
\int_{\Omega}\Big(\rho\bq\cdot\bw_t\Big)(\bx,t) d\bx
\nonumber\\&&
\qquad
\le
\Big[
\|(\rho \bu_t)(\cdot,t)\|_{\bL^2}\|\bu(\cdot,t)\|_{\bL^6}\|\nabla \bu(\cdot,t)\|_{\bL^3}
	+\|(\rho \bw_t)(\cdot,t)\|_{\bL^2}\|\bw(\cdot,t)\|_{\bL^6}
	\|\nabla \bw(\cdot,t)\|_{\bL^3}
\Big]
\nonumber\\&&
\qquad\quad
	+\Big[
	2\mu_r\Big\{
	\|\nabla \bw(\cdot,t)\|_{\bL^2} \|\bu_t (\cdot,t)\|_{\bL^2}
	+\|\nabla \bu(\cdot,t)\|_{\bL^2} \|\bw_t (\cdot,t)\|_{\bL^2}\Big\}
	\Big]
\nonumber\\&&
\qquad\quad
	+\Big[
	|f(t)|\|(\rho \bu_t)(\cdot,t)\|_{\bL^2}
	\Big(\|\nabla h(\cdot,t)\|_{\bL^2}+\|\bm(\cdot,t)\|_{\bL^2}\Big)
	+
	|g(t)|\|(\rho \bw_t)(\cdot,t)\|_{\bL^2}\|\bq(\cdot,t)\|_{\bL^2}
	\Big]
\nonumber\\&&
\qquad\quad
	+\Big[
	4\mu_r\|\bw(\cdot,t)\|_{\bL^2}\|\bw_t(\cdot,t)\|_{\bL^2}
	\Big]
	:=\sum_{i=1}^4J_i,
\label{eq:ml_and_ma_times_ut_wt}
\end{eqnarray}
where each $J_i$ are defined by the corresponding brackets $\big[\quad\big]$.
Now, we will prove the estimate by getting  
some bounds for each $J_i$ and then applying a Gronwall type inequality.
Indeed, first, for $J_1$, by Lemma~\ref{lema:aprioriestimatesforrho}, 
inequality \eqref{eq:spoingag}, Young inequality  and 
Lemma~\ref{prop:auxiliar_stokes_elliptic_results}, we deduce that
\begin{eqnarray}
&&J_1
\le
\sqrt{\beta} C_{poi} C_{gn}
\Big[
\|(\sqrt{\rho} \bu_t)(\cdot,t)\|_{\bL^2}\|\nabla \bu(\cdot,t)\|^{3/2}_{\bL^2}
\| \bu(\cdot,t)\|^{1/2}_{\bH^2}
+\|(\sqrt{\rho} \bw_t)(\cdot,t)\|_{\bL^2}\times
\nonumber
\\&&
\qquad\quad
\|\nabla \bw(\cdot,t)\|^{3/2}_{\bL^2}\| \bw(\cdot,t)\|^{1/2}_{\bH^2}\Big]
\nonumber
\\&&
\le 
\frac{\sqrt{\beta} C_{poi} C_{gn}}{2}\Big\{
\|(\sqrt{\rho} \bu_t)(\cdot,t)\|_{\bL^2}\;
\Big[\|\nabla \bu(\cdot,t)\|^{3}_{\bL^2}+\| \bu(\cdot,t)\|_{\bH^2}\Big]
+
\|(\sqrt{\rho} \bw_t)(\cdot,t)\|_{\bL^2}\times
\nonumber
\\&&
\qquad
\Big[\|\nabla  \bw(\cdot,t)\|^{3}_{\bL^2}+\| \bw(\cdot,t)\|_{\bH^2}\Big]
\Big\}
\nonumber
\\&&
\le
\left(\frac{\Uppsi_1}{\epsilon^u}+\Uppsi_2\right)
\|(\sqrt{\rho} \bu_t)(\cdot,t)\|^{2}_{\bL^2}
+\left(\frac{\Uppsi_3}{\epsilon^w}+\Uppsi_2\right)
\|(\sqrt{\rho} \bw_t)(\cdot,t)\|^{2}_{\bL^2}
+\Uppsi_4
\Big[\|\nabla \bu(\cdot,t)\|^{6}_{\bL^2}
\nonumber
\\&&
\qquad
+\|\nabla \bw(\cdot,t)\|^{6}_{\bL^2}\Big]
+\Uppsi_5
\Big[\|\nabla \bu(\cdot,t)\|^{2}_{\bL^2}
+\|\nabla \bw(\cdot,t)\|^{2}_{\bL^2}\Big]
+\Uppsi_6|f(t)|^{2}+\Uppsi_7|g(t)|^{2}
\label{eq:inequality_J1}
\end{eqnarray}
where
\begin{eqnarray*}
&&\Uppsi_1=\frac{\sqrt{\beta} C_{poi} C_{gn}}{4}
	\Big(1+2\frac{\Upsilon_2}{\Upsilon_1}+2\frac{\Upsilon_3}{\Upsilon_1}
	+2\frac{\Upsilon_4}{\Upsilon_1}\|\bm(\cdot,t)\|_{\bL^2}\Big),
\quad
\Uppsi_2=\sqrt{\beta} C_{poi} C_{gn}\frac{\Upsilon_5}{\Upsilon_1},
\\&&
\Uppsi_3=\frac{\sqrt{\beta} C_{poi} C_{gn}}{4}
	\Big(1+2\frac{\Upsilon_2}{\Upsilon_1}+2\frac{\Upsilon_3}{\Upsilon_1}
	+2\frac{\Upsilon_4}{\Upsilon_1}\|\bq(\cdot,t)\|_{\bL^2}\Big),
\\&&
\Uppsi_4=\frac{\sqrt{\beta} C_{poi} C_{gn}}{4}
	\Big(1+2\frac{\Upsilon_2}{\Upsilon_1}\Big)
	\max\{\epsilon^\bu,\epsilon^\bw\},
	\quad
	\Uppsi_5=\frac{\sqrt{\beta} C_{poi} C_{gn}}{2}\frac{\Upsilon_3}{\Upsilon_1},
\\&&
\Uppsi_6=\frac{\sqrt{\beta} C_{poi} C_{gn}}{2}\frac{\Upsilon_4}{\Upsilon_1}
	\|\bm(\cdot,t)\|_{\bL^2}
\;\mbox{ and }\;
\Uppsi_7=\frac{\sqrt{\beta} C_{poi} C_{gn}}{2}\frac{\Upsilon_4}{\Upsilon_1}
	\|\bq(\cdot,t)\|_{\bL^2}.
\end{eqnarray*}
Now, for $J_i$, $i=2,3,4$, by inequality \eqref{eq:spoingag},
Lemmas~\ref{lema:aprioriestimatesforrho}-\ref{lema:aprioriestimatesforhandr} 
and Young inequality, we deduce that
\begin{eqnarray}
&&J_2\le
\frac{\mu_r}{\alpha}\Big[\frac{1}{\epsilon^\bu}
\|(\sqrt{\rho} \bu_t)(\cdot,t)\|^{2}_{\bL^2}
	+\frac{1}{\epsilon^\bw}
	\|(\sqrt{\rho} \bw_t)(\cdot,t)\|^{2}_{\bL^2}\Big]
\nonumber
\\&&
\qquad
	+\mu_r\max\{\epsilon^\bu,\epsilon^\bw\}
	\Big[\|\nabla \bu(\cdot,t)\|^{2}_{\bL^2}
	+\|\nabla \bw(\cdot,t)\|^{2}_{\bL^2}\Big],
\label{eq:inequality_J2}
\\&&
J_3\le
\frac{\sqrt{\beta}(\|\nabla h(\cdot,t)\|_{\bL^2}
	+\|\bm(\cdot,t)\|_{\bL^2})}{2\;\epsilon^\bu}
	\|(\sqrt{\rho} \bu_t)(\cdot,t)\|^{2}_{\bL^2}
	+
	\frac{\sqrt{\beta}\|\bq(\cdot,t)\|_{\bL^2}}{2\;\epsilon^\bw}
	\|(\sqrt{\rho} \bw_t)(\cdot,t)\|^{2}_{\bL^2}
\nonumber
\\&&
\qquad
+\frac{\sqrt{\beta}}{2}\Big(\|\nabla h(\cdot,t)\|_{\bL^2}
	+\|\bm(\cdot,t)\|_{\bL^2}\Big)\;
	\epsilon^\bu\;|f(t)|^{2}
	+
	\frac{\sqrt{\beta}}{2}\|\bq(\cdot,t)\|_{\bL^2}
	\;\epsilon^\bw\;|g(t)|^{2}
\nonumber
\\&&
\hspace{0.5cm}
\le
\frac{\sqrt{\beta}}{2}(\Uppi_1+1)
\Big\{\frac{1}{\epsilon^\bu}
	\|\bm(\cdot,t)\|_{\bL^2}\|(\sqrt{\rho} \bu_t)(\cdot,t)\|^{2}_{\bL^2}
	+\frac{1}{\epsilon^\bw}
	\|\bq(\cdot,t)\|_{\bL^2} \|(\sqrt{\rho} \bw_t)(\cdot,t)\|^{2}_{\bL^2}\Big\}
\nonumber
\\&&
\qquad
+\frac{\sqrt{\beta}}{2}(\Uppi_1+1)\Big\{\epsilon^\bu
\|\bm(\cdot,t)\|_{\bL^2}|f(t)|^{2}
+\epsilon^\bw\|\bq(\cdot,t)\|_{\bL^2}|g(t)|^{2}\Big\},
\label{eq:inequality_J3}
\\&&
J_4\le
\frac{2\mu_r C_{poi}\;\epsilon^\bw\;}{\alpha}
\|\nabla \bw(\cdot,t)\|^{2}_{\bL^2}
+\frac{2\mu_r C_{poi}}{\alpha\;\epsilon^\bw\;}
\|(\sqrt{\rho} \bw_t)(\cdot,t)\|^{2}_{\bL^2}.
\label{eq:inequality_J4}
\end{eqnarray}
Inserting \eqref{eq:inequality_J1}-\eqref{eq:inequality_J4} in 
\eqref{eq:ml_and_ma_times_ut_wt} and selecting 
$\epsilon^{\ell}>\max\{N^{\ell}(1-\Uppsi_2)^{-1},0\}$ for $\ell\in\{\bu,\bw\}$, where
\begin{eqnarray*}
N^\bu &=&\Uppsi_1+\frac{\mu_r}{\alpha}+\frac{(\beta)^{1/2}}{2}
(\Uppi_1+1)\|\bm(\cdot,t)\|_{L^2}\quad\mbox{and}
\\
N^\bw &=&\Uppsi_2+\frac{\mu_r}{\alpha}(2C_{poi}+1)+\frac{(\beta)^{1/2}}{2}
(\Uppi_1+1)\|\bq(\cdot,t)\|_{L^2},
\end{eqnarray*}
we find that there exists $\Upxi_1,\Upxi_2$ and $\Upxi_3$ defined
as follows
\begin{eqnarray*}
 \Upxi_1&=&\Uppsi_4\mathcal{C}^{-1},
 \qquad
 \Upxi_2=\mathcal{C}^{-1}\left(\Uppsi_5+\mu_r\epsilon^\bw\alpha^{-1}
(2C_{poi}+1)\right),
\\
\Upxi_3&=&\mathcal{C}^{-1}
\max\Big\{\Uppsi_6+\epsilon^\bu\|\bm(\cdot,t)\|_{L^2},\;\Uppsi_7
+\epsilon^\bw\|\bq(\cdot,t)\|_{L^2}\Big\}
\end{eqnarray*}
with 
$
\mathcal{C}=\min\{2^{-1}(\mu+\mu_r),2^{-1}(c_0+2c_d),\;
1-\Uppsi_2-(\epsilon^\bu)^{-1}N^\bu,\;1-\Uppsi_2-(\epsilon^\bw)^{-1}N^\bw \}
$ and
such that the inequality
\begin{eqnarray}
&&\frac{d}{dt}\Big(\|\nabla \bu(\cdot,t)\|^{2}_{\bL^2}
+\|\nabla \bw(\cdot,t)\|^{2}_{\bL^2}\Big)
+\|(\sqrt{\rho} \bv_t)(\cdot,t)\|^{2}_{\bL^2}
+\|(\sqrt{\rho} \bw_t)(\cdot,t)\|^{2}_{\bL^2}
\le\Upxi_1
\Big[\|\nabla \bu(\cdot,t)\|^{6}_{\bL^2}
\nonumber\\&&
\qquad
+\|\nabla \bw(\cdot,t)\|^{6}_{\bL^2}\Big]
+\Upxi_2
\Big[\|\nabla \bu(\cdot,t)\|^{2}_{\bL^2}
+\|\nabla \bw(\cdot,t)\|^{2}_{\bL^2}\Big]
+\Upxi_3\Big[|f(t)|^{2}+|g(t)|^{2}\Big]
\label{lem:u_ut_w_wt_main_estimate}
\end{eqnarray}
holds for $t\in [0,T_{*}]$. Now,
making use of  Lemma~3 given on \cite{heywood_1980}, we conclude the
existence of $T_1$ depending on $\|\nabla \bu(\cdot,0)\|_{\bL^2}$ and
$\|\nabla \bw(\cdot,0)\|_{\bL^2}$ such  the estimate
\eqref{eq:lem:u_ut_w_wt_estimate} holds
with $\Uptheta$ depending only on 
$\Upxi_1,\Upxi_2,\Upxi_3,\|\nabla \bu(\cdot,0)\|_{\bL^2}$ and 
$\|\nabla \bw(\cdot,0)\|_{\bL^2}$.
\end{proof}

\begin{lemma}
\label{lem:estimates3}
Consider $T_1$ as is given on Lemma~\ref{lem:u_ut_w_wt_estimate}. Then,
there exists $\Upphi_i$, $i=1,\ldots,6,$ independent of $f$ and $g$ such that
the following estimate holds 
\begin{eqnarray}
&&\frac{d}{dt}\|\sqrt{\rho} \bu_t(\cdot,t)\|^2_{\bL^2}
+\| \nabla \bu_t(\cdot,t)\|^2_{\bL^2}
+\frac{d}{dt}\|\sqrt{\rho} \bw_t(\cdot,t)\|^2_{\bL^2}
+\| \nabla \bw_t(\cdot,t)\|^2_{\bL^2} 
\nonumber
\\&&
\quad
\le 
\Upphi_1
\Big[|f'(t)|
\|\sqrt{\rho} \bu_t(\cdot,t)\|_{\bL^2}
+|g'(t)|\|\sqrt{\rho} \bw_t(\cdot,t)\|_{\bL^2}\Big]
+\Upphi_2\Big[
|f(t)|\Big\{\|\nabla h_t(\cdot,t)\|_{\bL^2}+1\Big\}
\times
\nonumber
\\&&
\qquad\quad
\|\sqrt{\rho} \bu_t(\cdot,t)\|_{\bL^2}
+|g(t)|
\|\sqrt{\rho} \bw_t(\cdot,t)\|_{\bL^2}
\Big]
+
\Upphi_3\|\nabla\rho(\cdot,t)\|^2_{\bL^q}\Big[|f(t)|^2
\Big(\|h(\cdot,t)\|^2_{H^2}+1\Big)
\nonumber
\\&&
\qquad
+|g(t)|^2\|\nabla\rho(\cdot,t)\|^2_{\bL^q}
\Big]
+\Upphi_4
\|\nabla\rho(\cdot,t)\|^{4q/(3q-6)}_{\bL^q}
\Big[\|\sqrt{\rho}\bu_t(\cdot,t)\|^{2}_{\bH^2}
+\|\sqrt{\rho}\bw_t(\cdot,t)\|^{2}_{\bH^2}\Big]
\nonumber
\\&&
\qquad
+\Upphi_5
\|\nabla\rho(\cdot,t)\|^2_{L^q}\Big[\|\bu(\cdot,t)\|^{6/q}_{\bH^2}
+\|\bw(\cdot,t)\|^{6/q}_{\bH^2}\Big]
\nonumber
\\&&
\qquad
+
\Upphi_6\Big[\|\sqrt{\rho} \bu_t(\cdot,t)\|^{2}_{\bL^2}
+\|\sqrt{\rho} \bw_t(\cdot,t)\|^{2}_{\bL^2}\Big],
\label{eq:lem:estimates3}
\end{eqnarray}
for all $t\in [0,T_1].$
\end{lemma}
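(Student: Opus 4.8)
The plan is to differentiate the momentum equations in time, test against $\bu_t$ and $\bw_t$ respectively, and sum, mimicking the standard second-energy estimate for Navier--Stokes type systems but keeping careful track of the source terms through the Helmholtz representation \eqref{eq:helmholtz_f_uno}. First I would differentiate \eqref{eq:momento_lineal} with respect to $t$; using $\diver(\bu)=0$ the inertial term becomes $\rho_t\bu_t+\rho(\bu\cdot\nabla)\bu_t + \rho_t(\bu\cdot\nabla)\bu + \rho(\bu_t\cdot\nabla)\bu$, the viscous term $-(\mu+\mu_r)\Delta\bu_t$, and on the right $2\mu_r\curl\bw_t + (\rho f(\nabla h-\bm))_t$, where the last expression expands to $\rho_t f(\nabla h-\bm) + \rho f'(\nabla h-\bm) + \rho f(\nabla h_t - \bm_t)$. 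Testing against $\bu_t$, the viscous term yields $\|\nabla\bu_t\|^2_{\bL^2}$ after integration by parts (using $\bu_t=0$ on $\partial\Omega$, which follows from \eqref{eq:direct_problem_bc}), and the leading time-derivative term yields $\tfrac12\frac{d}{dt}\|\sqrt{\rho}\bu_t\|^2_{\bL^2}$ modulo a $\rho_t|\bu_t|^2$ term (controlled by \eqref{eq:ecuacion_continuidad}, writing $\rho_t=-\bu\cdot\nabla\rho$, and absorbed into $\Upphi_6\|\sqrt{\rho}\bu_t\|^2_{\bL^2}$ after Hölder and \eqref{eq:spoingag}). The analogous computation for \eqref{eq:momento_angular} tested against $\bw_t$ produces $\tfrac12\frac{d}{dt}\|\sqrt{\rho}\bw_t\|^2_{\bL^2}+\|\nabla\bw_t\|^2_{\bL^2}$ plus the reaction term $4\mu_r\|\bw_t\|^2_{\bL^2}$, which is harmless (also absorbed into $\Upphi_6$).

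Next I would bound each of the remaining terms on the right-hand side. The terms $f'(t)\int_\Omega \rho(\nabla h-\bm)\cdot\bu_t$ and $g'(t)\int_\Omega \rho\bq\cdot\bw_t$ are estimated by Hölder, Lemma~\ref{lema:aprioriestimatesforrho}, and the estimate \eqref{hx_and_rx_estimate} for $\|\nabla h\|_{\bL^2}$, giving the $\Upphi_1$ terms (here one uses that $\bm,\bq$ are given and hence $\|\bm(\cdot,t)\|_{\bL^2},\|\bq(\cdot,t)\|_{\bL^2}$ are bounded on $[0,T]$). The terms involving $f(t)$ without a derivative split into one piece carrying $\nabla h_t$ — bounded by Hölder and absorbed via Young into a $\|\sqrt\rho\bu_t\|_{\bL^2}$-weighted term, producing the $\Upphi_2$ bracket — and pieces carrying $\rho_t=-\bu\cdot\nabla\rho$ times $(\nabla h-\bm)$, which after Hölder with the exponents from \eqref{eq:spoingag}, the embedding $H^2\hookrightarrow L^\infty$, and the estimate \eqref{h_and_r_estimate} for $\|h\|_{H^2}$, yield the $\Upphi_3$ terms (the factor $\|\nabla\rho\|^2_{\bL^q}$ arising from squaring). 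The convective commutator terms $\int_\Omega \rho(\bu_t\cdot\nabla)\bu\cdot\bu_t$ and $\rho_t(\bu\cdot\nabla)\bu\cdot\bu_t$, $\rho_t\bu_t\cdot\bu_t$ require the Gagliardo--Nirenberg inequality \eqref{eq:spoingag} to interpolate $\|\bu_t\|_{\bL^{2q/(q-2)}}$ and $\|\nabla\bu_t\|$-type norms: after Hölder one gets factors of the form $\|\nabla\rho\|_{\bL^q}\|\bu_t\|_{\bL^{2q/(q-2)}}\cdots$, and $\|\bu_t\|_{\bL^{2q/(q-2)}}\le C\|\bu_t\|^{1-3/q}_{\bL^2}\|\bu_t\|^{3/q}_{\bH^2}$-style interpolation — actually using the first inequality in \eqref{eq:spoingag} with $\bu_t\in\bH^1_0$ to pass to $\|\nabla\bu_t\|_{\bL^2}$ — combined with Young's inequality to move a $\|\nabla\bu_t\|^2_{\bL^2}$ piece to the left-hand side and leave the weight $\|\nabla\rho\|^{4q/(3q-6)}_{\bL^q}$ against $\|\bu_t\|^2$-type norms; this is the source of the $\Upphi_4$ and $\Upphi_5$ terms, and the $\bL^2$ norms of the time-derivatives with a plain constant feed $\Upphi_6$. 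The curl coupling terms $2\mu_r\int\curl\bw_t\cdot\bu_t$ and $2\mu_r\int\curl\bu_t\cdot\bw_t$ are bounded by Young's inequality, absorbing $\varepsilon\|\nabla\bu_t\|^2_{\bL^2}+\varepsilon\|\nabla\bw_t\|^2_{\bL^2}$ to the left and the rest into $\Upphi_6$. Summing all contributions and choosing the Young parameters small enough that the viscous terms on the left survive with a positive constant yields \eqref{eq:lem:estimates3}; the constants $\Upphi_i$ depend only on the data and the uniform bounds from Lemmas~\ref{lema:aprioriestimatesforrho}--\ref{lem:u_ut_w_wt_estimate}, hence are independent of $f,g$.

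The main obstacle is the bookkeeping of the highest-order terms, specifically the convective commutators weighted by $\nabla\rho$: one must choose the interpolation exponents in \eqref{eq:spoingag} so that the resulting power of $\|\nabla\rho\|_{\bL^q}$ is exactly $4q/(3q-6)$ and the $\bH^2$-norm of $\bu_t$ (or $\bw_t$) appears only to the second power, so that after Young's inequality the $\|\nabla\bu_t\|^2_{\bL^2}$ term can be absorbed on the left and what remains is a term of the form (power of $\|\nabla\rho\|_{\bL^q}$) times $\|\sqrt\rho\bu_t\|^2_{\bH^2}$ and a lower-order $\|\sqrt\rho\bu_t\|^2_{\bL^2}$. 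Getting this exponent algebra to land precisely on the stated form — and similarly verifying that the $f$-dependent terms produce exactly the $\|h\|^2_{H^2}+1$ and $\|\nabla\rho\|^2_{\bL^q}$ structure claimed — is where the care is needed; everything else is routine Hölder--Young manipulation on top of the already-established a priori bounds.
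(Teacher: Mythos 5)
Your proposal follows essentially the same route as the paper: differentiate the two momentum equations in time, test with $\bu_t$ and $\bw_t$, sum, replace $\rho_t$ by $-\bu\cdot\nabla\rho$ via \eqref{eq:ecuacion_continuidad}, and bound the resulting groups of terms (the $f'$, $g'$ terms; the $f\nabla h_t$ terms; the $\rho_t$-weighted source and convective commutator terms; the curl couplings) with H\"older, Gagliardo--Nirenberg, the bounds of Lemmas~\ref{lema:aprioriestimatesforrho}--\ref{lem:u_ut_w_wt_estimate}, and Young's inequality to absorb $\|\nabla\bu_t\|^2_{\bL^2}$ and $\|\nabla\bw_t\|^2_{\bL^2}$ on the left. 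The term-by-term attribution to the $\Upphi_i$ matches the paper's decomposition $I_0,\dots,I_6$ (up to a minor bookkeeping slip in assigning the $\rho_t|\bu_t|^2$ term to $\Upphi_6$ rather than $\Upphi_4$), so the argument is correct and not a genuinely different approach.
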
 
\begin{proof}
Diferentiating \eqref{eq:momento_lineal} 
and  \eqref{eq:momento_angular} with respect to $t$; 
testing the results by $\bu_t$ and  $\bw_t$,
respectively;
summing the resulting equations; and rearranging the terms we get
\begin{eqnarray}
&&\frac{1}{2}\frac{d}{dt}\int_{\Omega}\rho|\bu_t(\bx,t)|^2d\bx
+(\mu+\mu_r)\int_{\Omega}|\nabla u_t(\bx,t)|^2d\bx
+\frac{1}{2}\frac{d}{dt}\int_{\Omega}\rho| \bw_t(\bx,t)|^2d\bx
\nonumber\\&&
\quad
+(c_a+c_d)\int_{\Omega}|\nabla \bw_t(\bx,t)|^2d\bx
=2\mu_r\left[\int_{\Omega}\curl \cdot \bw(x,t)\bu_t(\bx,t)d\bx
+\int_{\Omega}\curl \bu(x,t)\cdot\bw_t(\bx,t)d\bx\right]
\nonumber\\&&
\quad
+\left[\int_{\Omega}f'(t)\Big(\rho(\nabla h-\bm)\cdot\bu_t\Big)(\bx,t)d\bx
+\int_{\Omega}g'(t)\Big(\rho \bq\cdot\bw_t\Big)(\bx,t)d\bx\right]
\nonumber\\&&
\quad
+\left[\int_{\Omega}f(t)\Big(\rho(\nabla h_t-\bm_t)\cdot\bu_t\Big)(\bx,t)d\bx
+\int_{\Omega}g(t)\Big(\rho\bq_t\cdot\bw_t\Big)(\bx,t)d\bx\right]
\nonumber\\&&
\quad
+\left[
\int_{\Omega}f(t)\Big(\rho_t(\nabla h-\bm)\cdot\bu_t\Big)(\bx,t)d\bx
+\int_{\Omega}g(t)\Big(\rho_t\bq \cdot\bw_t\Big)(\bx,t)d\bx
\right]
\nonumber\\&&
\quad
-\frac{1}{2}\left[\int_{\Omega}\Big(\rho_t |\bu_t|^2\Big)(\bx,t)d\bx
+\int_{\Omega}\Big(\rho_t |\bw_t|^2\Big)(\bx,t)d\bx\right]
\nonumber\\&&
\quad
+\left[
\int_{\Omega}\Big(\rho_t (\bu\cdot\nabla )\bu\cdot \bu_t\Big)(\bx,t)d\bx
+\int_{\Omega}\Big(\rho_t (\bw\cdot\nabla )\bw \cdot\bw_t\Big)(\bx,t)d\bx
\right]
\nonumber\\&&
\quad
-\left[
\int_{\Omega}\Big(\rho (\bu_t\cdot\nabla )\bu\cdot \bu_t\Big)(\bx,t)d\bx
+\int_{\Omega}\Big(\rho (\bw_t\cdot\nabla )\bw\cdot \bw_t\Big)(\bx,t)d\bx
\right]
= \sum_{i=0}^6 I_i,
\label{eq:lem:estimates3_full}
\end{eqnarray}
where $I_i$ for 
$i=0,\ldots,6$ are defined by the brackets.
Hence, the proof of \eqref{eq:lem:estimates3} is reduced to
get some bounds for 
each $I_i$ based on Minkowski and H\"older inequalities
and the previous Lemmas as will be specified below.
First, by applying the  
Lemmas~\ref{lema:aprioriestimatesforrho},\ref{lem:u_ut_w_wt_estimate}
and Young inequality, we find that $I_0$ can be bounded as follows
\begin{eqnarray}
I_0
&\le& 2\mu_r\left|\int_{\Omega}\Big(\curl \bw\cdot\bu_t\Big)(\bx,t)d\bx
+\int_{\Omega}\Big(\curl \bu\cdot\bw_t\Big)(\bx,t)d\bx\right|
\nonumber
\\
\quad
&\le& 2\mu_r\Big(\|\nabla \bw(\cdot,t)\|_{\bL^2}\|\bu_t(\cdot,t)\|_{\bL^2}
+\|\nabla \bu(\cdot,t)\|_{\bL^2}\|\bw_t(\cdot,t)\|_{\bL^2}\Big)
\nonumber
\\
\quad
&\le &(\alpha)^{-1/2}\mu_r\Uptheta(t)\Big(
\|\sqrt{\rho}\bu_t(\cdot,t)\|^2_{\bL^2}+\|\sqrt{\rho}\bw_t(\cdot,t)\|^2_{\bL^2}
\Big),
\label{eq:lem:estimates3_I0}
\end{eqnarray}
for all $t\in [0,T_1].$
Now, by Lemmas~\ref{lema:aprioriestimatesforrho} and 
\ref{lema:aprioriestimatesforhandr}, we get that
\begin{eqnarray}
I_1
&\le&\left|\int_{\Omega}f'(t)\Big(\rho(\nabla h-\bm)\cdot\bu_t\Big)(\bx,t)d\bx
    +\int_{\Omega}g'(t)\Big(\rho \bq\cdot\bw_t\Big)(\bx,t)d\bx\right|
\nonumber
\\
\quad
&\le& \sqrt{\beta}\Big(
|f'(t)|\|(\nabla h-\bm)(\cdot,t)\|_{\bL^2}
\|\sqrt{\rho} \bu_t(\cdot,t)\|_{\bL^2}
+|g'(t)|\|\bq(\cdot,t)\|_{\bL^2}\|\sqrt{\rho} \bw_t(\cdot,t)\|_{\bL^2}\Big)
\nonumber
\\
\quad
&\le &\sqrt{\beta}
\Big(\Uppi_1+1\Big)\;
\Big(|f'(t)|\|\bm(\cdot,t)\|_{\bL^2}\|\sqrt{\rho} \bu_t(\cdot,t)\|_{\bL^2}
+|g'(t)|\|\bq(\cdot,t)\|_{\bL^2}\|\sqrt{\rho} \bw_t(\cdot,t)\|_{\bL^2}\Big)
\nonumber
\\
\quad
&\le &\overline{\Upphi}_1
\Big(|f'(t)|\|\sqrt{\rho} \bu_t(\cdot,t)\|_{\bL^2}
+|g'(t)|\|\sqrt{\rho} \bw_t(\cdot,t)\|_{\bL^2}\Big),
\label{eq:lem:estimates3_I1}
\end{eqnarray}
where
$
\overline{\Upphi}_1=\sqrt{\beta} \Big(\Uppi_1+1\Big)
\max\{\|\bm(\cdot,t)\|_{\bL^2},\|\bq(\cdot,t)\|_{\bL^2}\}.
$
In the case of $I_2$, by applying Lemma~\ref{lema:aprioriestimatesforrho}, 
we have that
\begin{eqnarray}
I_2
&\le &\left|\int_{\Omega}\Big(f\rho(\nabla h_t-\bm_t)\cdot\bu_t\Big)(\bx,t)d\bx
    +\int_{\Omega}\Big(g\rho \bq_t\cdot\bw_t\Big)(\bx,t)d\bx\right|
\nonumber
\\
\quad
&\le& \sqrt{\beta}\Big(
|f(t)|
\Big\{\|\nabla h_t(\cdot,t)\|_{\bL^2}+\|\bm_t(\cdot,t)\|_{\bL^2}\Big\}
\|\sqrt{\rho} \bu_t(\cdot,t)\|_{\bL^2}
\nonumber
\\&&
\hspace{1.5cm}
+|g(t)|\|\bq_t(\cdot,t)\|_{\bL^2}\|\sqrt{\rho} \bw_t(\cdot,t)\|_{\bL^2}\Big),
\nonumber
\\
\quad
&\le& \overline{\Upphi}_2\Big(
|f(t)|
\Big\{\|\nabla h_t(\cdot,t)\|_{\bL^2}+1\Big\}
\|\sqrt{\rho} \bu_t(\cdot,t)\|_{\bL^2}
\nonumber
\\&&
\hspace{1.5cm}
+|g(t)|\Big\{\|\nabla r_t(\cdot,t)\|_{\bL^2}+1\Big\}
\|\sqrt{\rho} \bw_t(\cdot,t)\|_{\bL^2}
\Big),
\label{eq:lem:estimates3_I2}
\end{eqnarray}
where
$
\overline{\Upphi}_2=\sqrt{\beta}\;
\max\{\|\bm_t(\cdot,t)\|_{\bL^2},\;
\|\bq_t(\cdot,t)\|_{\bL^2},\;1\}.
$
For $I_3$, by  equation \eqref{eq:ecuacion_continuidad},
inequality \eqref{eq:spoingag},
Lemmas~\ref{lema:aprioriestimatesforrho} and
\ref{lem:u_ut_w_wt_estimate} and noticing  that
\begin{eqnarray*}
\|(\nabla h-\bm)(\cdot,t)\|_{\bL^3}&\le 2^{-1}
\Big(C^2_{gn}\Uppi^2_1\|\bm(\cdot,t)\|_{\bL^2}\|h(\cdot,t)\|_{H^2}
+\|\bm(\cdot,t)\|^2_{\bL^3}\Big)
\end{eqnarray*}
we deduce that
\begin{eqnarray}
I_3
&\le&\left|\int_{\Omega}f(t)\Big(\rho_t(\nabla h-\bm)\cdot\bu_t\Big)(\bx,t)d\bx
    +\int_{\Omega}g(t)\Big(\rho_t\bq\cdot\bw_t\Big)(\bx,t)d\bx\right|
\nonumber
\\
\quad
&=&\left|\int_{\Omega}f(t)\Big((\bu\cdot\nabla\rho)(\nabla h-\bm)\cdot\bu_t\Big)(\bx,t)d\bx
    +\int_{\Omega}g(t)\Big((\bw\cdot\nabla\rho)\bq\cdot\bw_t\Big)(\bx,t)d\bx\right|
\nonumber
\\
\quad
&\le& 
|f(t)|
\|\bu(\cdot,t)\|_{\bL^6}\|\nabla\rho(\cdot,t)\|_{\bL^3}
\|(\nabla h-\bm)(\cdot,t)\|_{\bL^3}\|\bu_t(\cdot,t)\|_{\bL^6}
\nonumber
\\&&
\qquad
+|g(t)|\|\bw(\cdot,t)\|_{\bL^6}\|\nabla\rho(\cdot,t)\|_{\bL^3}
\|\bq(\cdot,t)\|_{\bL^3}\|\bw_t(\cdot,t)\|_{\bL^6}
\nonumber
\\
\quad
&\le&
C_{poi}\Uptheta(t)\Big\{
|f(t)|\|\nabla\rho(\cdot,t)\|_{\bL^3}
\|(\nabla h-\bm)(\cdot,t)\|_{\bL^3}\|u_t(\cdot,t)\|_{\bL^6}
\nonumber
\\&&
\hspace{2.1cm}
+|g(t)|\|\nabla\rho(\cdot,t)\|_{\bL^3}
\|\bq(\cdot,t)\|_{\bL^3}\|\bw_t(\cdot,t)\|_{\bL^6}\Big\}
\nonumber
\\
\quad
&\le&
2^{-1}C^2_{poi}\Uptheta(t) 
\Big\{
|f(t)|\|\nabla\rho(\cdot,t)\|_{\bL^3}
\Big(
C^2_{gn}\Uppi^2_1\|\bm(\cdot,t)\|_{\bL^2}\|h(\cdot,t)\|_{H^2}
+\|\bm(\cdot,t)\|^2_{\bL^3}
\Big)
\times
\nonumber
\\&&
\hspace{2.6cm}
\|\nabla \bu_t(\cdot,t)\|_{\bL^2}
+|g(t)|\|\nabla\rho(\cdot,t)\|_{\bL^3}
\|\bq(\cdot,t)\|^2_{L^3}\|\nabla \bw_t(\cdot,t)\|_{\bL^2}\Big\}
\nonumber
\\
\quad
&\le&
\frac{1}{2\epsilon_\bu}\|\nabla \bu_t(\cdot,t)\|^2_{\bL^2}
+\overline{\Upphi}^\bu_3|f(t)|^2\|\nabla\rho(\cdot,t)\|^2_{\bL^q}
\Big(\|h(\cdot,t)\|^2_{H^2}+1\Big)
\nonumber
\\&&
\quad\;\; +
\frac{1}{2\epsilon_\bw}\|\nabla \bw_t(\cdot,t)\|^2_{\bL^2}
+\overline{\Upphi}^\bw_3|g(t)|^2\|\nabla\rho(\cdot,t)\|^2_{\bL^q},
\label{eq:lem:estimates3_I3}
\end{eqnarray}
for all $t\in[0,T_1]$,
where $\overline{\Upphi}^{\ell}_3=3\epsilon_{\ell}
|\Omega|^{2(q-3)/3q}C^4_{poi}\Uptheta(t)^2\mathcal{L}^2/8$
with $\ell\in\{\bu,\bw\}$ and $\mathcal{L}$ is defined as follows
\begin{eqnarray*}
\mathcal{L}=
\max\Big\{2^{-1}C^2_{gn}\Uppi^2_1\|\bm(\cdot,t)\|^2_{\bL^2}+
\|\bm(\cdot,t)\|^2_{\bL^3},
2^{-1}C^2_{gn}\Uppi^2_1\|\bq(\cdot,t)\|^2_{\bL^2}+
\|\bq(\cdot,t)\|^2_{\bL^3}\}.
\end{eqnarray*}
The term $I_4$  can be bounded by the application of
equation \eqref{eq:ecuacion_continuidad}, the inequality \eqref{eq:spoingag}
and Lemma~\ref{lem:u_ut_w_wt_estimate}, since
we can perform the following calculus
\begin{eqnarray}
I_4
&\le& \frac{1}{2}\left|\int_{\Omega}\Big(\rho_t |\bu_t|^2\Big)(\bx,t)d\bx
    +\int_{\Omega}\Big(\rho_t |\bw_t|^2\Big)(\bx,t)d\bx\right|
\nonumber
\\
\quad
&=&\frac{1}{2}\left|\int_{\Omega}\Big((\bu\cdot\nabla\rho) |\bu_t|^2\Big)(\bx,t)d\bx
    +\int_{\Omega}\Big((\bw\cdot\nabla\rho) |\bw_t|^2\Big)(\bx,t)d\bx\right|
\nonumber
\\
\quad
&\le&
\|\bu(\cdot,t)\|_{\bL^6}\|\nabla\rho(\cdot,t)\|_{\bL^q}
\|\bu_t(\cdot,t)\|_{\bL^{2q/(q-2)}}
\|\bu_t(\cdot,t)\|_{\bL^3}
\nonumber
\\&&
\qquad
+\|\bw(\cdot,t)\|_{\bL^6}\|\nabla\rho(\cdot,t)\|_{\bL^q}
\|\bw_t(\cdot,t)\|_{\bL^{2q/(q-2)}}
\|\bw_t(\cdot,t)\|_{\bL^3}
\nonumber
\\
\quad
&\le&
C_{poi}\Uptheta(t)\Big\{
\|\nabla\rho(\cdot,t)\|_{\bL^q}\|\bu_t(\cdot,t)\|_{\bL^{2q/(q-2)}}
\|\bu_t(\cdot,t)\|_{\bL^3}
\nonumber
\\&&
\quad
+\|\nabla\rho(\cdot,t)\|_{\bL^q}\|\bw_t(\cdot,t)\|_{\bL^{2q/(q-2)}}
\|\bw_t(\cdot,t)\|_{\bL^3}
\Big\}
\nonumber
\\
\quad
&\le&
C_{poi}C^2_{gn}\Uptheta(t) \Big\{
\|\nabla\rho(\cdot,t)\|_{\bL^q}
\|\bu_t(\cdot,t)\|^{1-3/q}_{\bL^2}\|\nabla \bu_t(\cdot,t)\|^{3/q}_{\bL^2}
\|\bu_t(\cdot,t)\|^{1/2}_{\bL^2}\|\nabla \bu_t(\cdot,t)\|^{1/2}_{\bL^2}
\nonumber
\\&&
\qquad
+\|\nabla\rho(\cdot,t)\|_{\bL^q}
\|\bw_t(\cdot,t)\|^{1-3/q}_{\bL^2}\|\nabla \bw_t(\cdot,t)\|^{3/q}_{\bL^2}
\|\bw_t(\cdot,t)\|^{1/2}_{\bL^2}\|\nabla \bw_t(\cdot,t)\|^{1/2}_{\bL^2}
\Big\}
\nonumber
\\
\quad&\le&
\frac{1}{2\epsilon_\bu}\|\nabla \bu_t(\cdot,t)\|^{2}_{\bL^2}
+\overline{\Upphi}^\bu_4
\|\nabla\rho(\cdot,t)\|^{4q/(3q-6)}_{\bL^q}\|\sqrt{\rho}\bu_t(\cdot,t)\|^{2}_{\bH^2}
\nonumber
\\&&
\qquad
+\frac{1}{2\epsilon_\bw}\|\nabla \bw_t(\cdot,t)\|^{2}_{\bL^2}
+\overline{\Upphi}^\bw_4
\|\nabla\rho(\cdot,t)\|^{4q/(q+6)}_{\bL^q}\|\sqrt{\rho}\bw_t(\cdot,t)\|^{2}_{\bH^2},
\label{eq:lem:estimates3_I4}
\end{eqnarray}
where $\overline{\Upphi}^{\ell}_4=(\alpha)^{-1}\Big((2\epsilon_{\ell})^{(q+6)}
(C_{poi}C^2_{gn}\Uptheta(t))^{4q}\Big)^{1/(3q-6)}$
with $\ell\in\{\bu,\bw\}$.
An application of
equation \eqref{eq:ecuacion_continuidad}, inequality
\eqref{eq:spoingag}
and Lemma~\ref{lem:u_ut_w_wt_estimate} implies 
the following bound for~$I_5$
\begin{eqnarray}
I_5
&\le& \left|\int_{\Omega}\Big(\rho_t (\bu\cdot\nabla)\bu\cdot \bu_t\Big)(\bx,t)d\bx
    +\int_{\Omega}\Big(\bw\cdot\nabla)\bw \cdot\bw_t\Big)(\bx,t)d\bx\right|
\nonumber
\\
\quad
&=&\frac{1}{2}\left|\int_{\Omega}\Big((\bu\cdot\nabla\rho) \bu_t\cdot\bu_t\Big)(\bx,t)d\bx
    +\int_{\Omega}\Big((\bw\cdot\nabla\rho) \bw_t\cdot\bw_t\Big)(\bx,t)d\bx\right|
\nonumber
\\
\quad
&\le&
\|\bu(\cdot,t)\|^2_{\bL^6}\|\bu_t(\cdot,t)\|_{\bL^6}\|\nabla\rho(\cdot,t)\|_{\bL^q}
\|\nabla \bu(\cdot,t)\|_{\bL^{2q/(q-2)}}
\nonumber
\\&&
\qquad
+\|\bw(\cdot,t)\|^2_{L^6}\|\bw_t(\cdot,t)\|_{\bL^6}\|\nabla\rho(\cdot,t)\|_{\bL^q}
\|\nabla \bw(\cdot,t)\|_{\bL^{2q/(q-2)}}
\nonumber
\\
\quad
&\le&
C^3_{poi}\Uptheta(t)^2 C_{gn}\Big\{
\|\nabla \bu_t(\cdot,t)\|_{\bL^2}\|\nabla\rho(\cdot,t)\|_{\bL^q}
\|\nabla \bu(\cdot,t)\|^{1-3/q}_{\bL^2}\|\bu(\cdot,t)\|^{3/q}_{\bH^2}
\nonumber
\\&&
\qquad
+\|\nabla \bw_t(\cdot,t)\|_{\bL^2}\|\nabla\rho(\cdot,t)\|_{\bL^q}
\|\nabla \bw(\cdot,t)\|^{1-3/q}_{\bL^2}\|\bw(\cdot,t)\|^{3/q}_{\bH^2}
\Big\}
\nonumber
\\
\quad&\le&
\frac{1}{2\epsilon_\bu}\|\nabla \bu_t(\cdot,t)\|^{2}_{\bL^2}
+\overline{\Upphi}^u_5
\|\nabla\rho(\cdot,t)\|^2_{\bL^q}\|\bu(\cdot,t)\|^{6/q}_{\bH^2}
\nonumber
\\&&
\qquad+
\frac{1}{2\epsilon_\bw}\|\nabla \bw_t(\cdot,t)\|^{2}_{\bL^2}
+\overline{\Upphi}^\bw_5
\|\nabla\rho(\cdot,t)\|^2_{\bL^q}\|\bw(\cdot,t)\|^{6/q}_{\bH^2},
\label{eq:lem:estimates3_I5}
\end{eqnarray}
where $\overline{\Upphi}^{\ell}_5=2^{-1}\epsilon_{\ell}\;
C^6_{poi}\Uptheta(t)^{6(q-1)/q} C^2_{gn}$
with $\ell\in\{\bu,\bw\}$.
By inequality \eqref{eq:spoingag}
and Lemma~\ref{lem:u_ut_w_wt_estimate} we deduce that
\begin{eqnarray}
I_6
&\le&\left|\int_{\Omega}\Big(\rho (\bu_t\cdot\nabla)\bu\cdot \bu_t\Big)(\bx,t)d\bx
    +\int_{\Omega}\Big(\rho (\bw_t\cdot\nabla)\bw \cdot\bw_t\Big)dx\right|
\nonumber
\\
\quad
&\le&
\|\rho(\cdot,t)\|_{L^\infty}\|\nabla \bu(\cdot,t)\|_{\bL^2}
\| \bu_t(\cdot,t)\|_{L^3}\| \bu_t(\cdot,t)\|_{\bL^6}
\nonumber
\\&&
\qquad
+\|\rho(\cdot,t)\|_{L^\infty}\|\nabla \bw(\cdot,t)\|_{\bL^2}
\| \bw_t(\cdot,t)\|_{\bL^3}\| \bw_t(\cdot,t)\|_{\bL^6}
\nonumber
\\
\quad
&\le&
\beta C_{poi}C_{gn}\Uptheta(t) \Big\{
\| \bu_t(\cdot,t)\|^{1/2}_{\bL^2}\| \nabla \bu_t(\cdot,t)\|^{3/2}_{\bL^2}
+\| \bw_t(\cdot,t)\|^{1/2}_{\bL^2}\| \nabla \bw_t(\cdot,t)\|^{3/2}_{\bL^2}
\Big\}
\nonumber
\\
\quad&\le&
\frac{1}{2\epsilon_\bu }\|\nabla \bu_t(\cdot,t)\|^{2}_{\bL^2}
+\overline{\Upphi}^\bu_6\|\sqrt{\rho} \bu_t(\cdot,t)\|^{2}_{\bL^2}
+\frac{1}{2\epsilon_\bw }\|\nabla \bw_t(\cdot,t)\|^{2}_{\bL^2}
+\overline{\Upphi}^\bw_6
\|\sqrt{\rho} \bw_t(\cdot,t)\|^{2}_{\bL^2},
\qquad
\label{eq:lem:estimates3_I6}
\end{eqnarray}
where $\overline{\Upphi}^{\ell}_6=
(\alpha)^{-1}(2\epsilon_{\ell})^3(\beta C_{poi}C_{gn}\Uptheta(t))^4$
with $\ell\in\{\bu,\bw\}$.
Inserting \eqref{eq:lem:estimates3_I1}-\eqref{eq:lem:estimates3_I6} in
\eqref{eq:lem:estimates3_full} and selecting $\epsilon_\bu=2(\mu+\mu_r)^{-1}$
and $\epsilon_\bw=2(c_a+c_d)^{-1}$,
we deduce that \eqref{eq:lem:estimates3} holds
with $\Upphi_i=2\max\{\overline{\Upphi}^{\bu}_i,\overline{\Upphi}^{\bw}_i\}$
for $i=1,\ldots,6$.
\end{proof}

\subsection{Proof of Theorem~\ref{teo:global_estimates}}
\label{subsec:proof_of_teo:global_estimates}

The existence of $T_1$ and $\upkappa_1$ follows from 
\eqref{lem:u_ut_w_wt_main_estimate}.  Now, before starting the
proof of \eqref{teo:global_estimates_kapa2}-\eqref{teo:global_estimates_kapa14},
we deduce two estimates. First, differentiating \eqref{eq:ecuacion_continuidad}
with respect to $x_i$, using \eqref{eq:incompresibilidad}, testing the result 
by $|\rho_{x_i}|^{q-2}\rho_{x_i}$ and applying the Sobolev inequality we 
deduce that there exists $C_{sob}$ independent of $f$ and $g$ such that
\begin{eqnarray}
\frac{d}{dt}\|\nabla\rho(\cdot,t)\|^q_{\bL^q}\le C_{sob} 
\|\bu(\cdot,t)\|_{\bW^{2,s}}\|\nabla\rho(\cdot,t)\|_{\bL^q},\quad
\mbox{for $t\in [0,T_*]$.}
\label{eq:sobolev}
\end{eqnarray}
Second, by the regularity of the solutions for \eqref{eq:stokes_problem}
we have that there exists $C^{reg}_3$ depending only on $\mu,\mu_r$ and
$\Omega$ such that
\begin{eqnarray*}
\|\bu(\cdot,t)\|_{\bW^{2,s}}+\|p(\cdot,t)\|_{\bW^{1,s}}
\le C^{reg}_3\; \Big\|\Big(2\mu_r \curl \bw
+\rho f(\nabla h-\bm)
-\rho \bu_t-\rho (\bu\cdot\nabla) \bu\Big)(\cdot,t)\Big\|_{\bL^s}.
\end{eqnarray*}
Hence, by the Minkowski and H\"older inequalities and \eqref{eq:spoingag},
we find that there exists 
$\upxi_1=2\mu_rC^{reg}_3C_{gn},$
$\upxi_2=\beta C^{reg}_3\max\{C_{gn},\|\bm(\cdot,t)\|_{\bL^2}\}$, 
$\upxi_3=C^{reg}_3C_{gn}C^{2,\infty}_{iny}$ and 
$\upxi_4=\beta C^{reg}_3C_{poi}$, such that
\begin{eqnarray}
\|\bu(\cdot,t)\|_{\bW^{2,s}}+\|p(\cdot,t)\|_{W^{1,s}}
\le 
\upxi_1\|\bw(\cdot,t)\|_{\bH^2}
+\upxi_2|f(t)|\Big(\|h(\cdot,t)\|_{H^2}+1\Big)
\nonumber
\\
\hspace{2.8cm}
+\upxi_3\|\bu(\cdot,t)\|^2_{\bH^2}
+\upxi_4 \|\nabla \bu_t(\cdot,t)\|_{\bL^2},
\label{eq:w2s_regularity}
\end{eqnarray}
for $t\in [0,T_*]$.
Therefore, we derive the proof of  \eqref{teo:global_estimates_kapa2} by
inserting \eqref{eq:w2s_regularity} in \eqref{eq:sobolev} and using the estimates
\eqref{h_and_r_estimate}, \eqref{ht_and_rt_estimate} and \eqref{eq:lem:estimates3}. 
The estimate \eqref{teo:global_estimates_kapa3_4} is deduced
from \eqref{teo:global_estimates_kapa2} and
\eqref{h_and_r_estimate}.
The inequality \eqref{teo:global_estimates_kapa5_8} is obtained 
from \eqref{teo:global_estimates_kapa2}, \eqref{eq:uppi_cuatro}, \eqref{eq:uppi_cinco},
\eqref{ht_and_rt_estimate} and \eqref{eq:regularity_u_p_w}.
The estimate \eqref{teo:global_estimates_kapa9_10} is proved by the application of 
\eqref{teo:global_estimates_kapa1}, \eqref{eq:uppi_cuatro}, \eqref{eq:uppi_cinco} and
\eqref{eq:regularity_u_p_w}.
The estimate \eqref{teo:global_estimates_kapa11_13} follows
from \eqref{eq:ecuacion_continuidad}, \eqref{teo:global_estimates_kapa1}
and \eqref{teo:global_estimates_kapa9_10}.
We complete the proof of the theorem deducing the inequality
\eqref{teo:global_estimates_kapa14} 
by combining the results given on
\eqref{teo:global_estimates_kapa2} and \eqref{eq:w2s_regularity}.

\section{Well-posedness of the direct problem.}
\label{sec:dp}
 
The well-posedness of the direct problem is given
by the following Theorem:

\begin{theorem}
\label{teo:direct_problem_solution}
Consider that the functions $\rho_0,\bv_0,\bw_0,\bm,\bq,h$ and $r$ 
satisfy the hypothesis
(H$_1$)-(H$_4$) and $(f,g)\in [H^1(0,T)]^2$. Then, the direct problem
\eqref{eq:momento_lineal}-\eqref{eq:direct_problem_ic}
and \eqref{eq:helmholtz_f_uno}-\eqref{eq:helmholtz_f_cuatro}
possesses a unique solution $\{\bu,\bw,\rho,p,h\}$ in the sense of
the definition \ref{def:strong_solutions_dp}.
\end{theorem}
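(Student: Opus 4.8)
The plan is to prove existence by a linearization--fixed-point scheme on a short time interval $[0,T_*]\subset[0,T]$, using the a priori bounds of Theorem~\ref{teo:global_estimates} to keep the iterates confined to a fixed ball, and to establish uniqueness separately through an energy estimate for the difference of two solutions, in the spirit of \cite{boldrini_2003}.

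\emph{The iteration.} I would fix $R>0$ and $T_*\in\,]0,T]$, to be adjusted below, and let $\mathcal{B}$ be the closed ball of radius $R$ in the space of pairs $(\tilde\bu,\tilde\bw)$ with $\tilde\bu\in L^\infty([0,T_*];D(A))\cap L^2([0,T_*];\bW^{2,s}(\Omega))$, $\tilde\bu_t\in L^\infty([0,T_*];\bL^2(\Omega))$, and the analogous bounds for $\tilde\bw$ with $D(L)$ in place of $D(A)$, all carrying the prescribed initial data $\bu_0,\bw_0$. To a pair in $\mathcal B$ I associate $\Lambda(\tilde\bu,\tilde\bw)=(\bu,\bw)$ via three auxiliary problems: (i) because $\diver\tilde\bu=0$ and $\tilde\bu\in L^2([0,T_*];\bW^{2,s}(\Omega))\subset L^2([0,T_*];\bW^{1,\infty}(\Omega))$, the transport equation \eqref{eq:ecuacion_continuidad} with datum $\rho_0$ from (H$_1$) has a unique solution $\rho\in C^1(\overline\Omega\times[0,T_*])$ by the method of characteristics, with $\alpha\le\rho\le\beta$ (Lemma~\ref{lema:aprioriestimatesforrho}) and $\nabla\rho$ bounded in $L^\infty([0,T_*];\bL^q(\Omega))$ by \eqref{eq:sobolev}; (ii) for this $\rho$, the Neumann problem \eqref{eq:helmholtz_f_dos}--\eqref{eq:helmholtz_f_cuatro} is solved pointwise in $t$ by the Lax--Milgram theorem in $H^1(\Omega)/\R$ — the compatibility condition being precisely \eqref{eq:helmholtz_f_tres} — together with the normalization, and elliptic regularity and Lemma~\ref{lema:aprioriestimatesforhandr} give $h\in L^\infty([0,T_*];H^2(\Omega))$ and $\nabla h_t\in L^\infty([0,T_*];\bL^2(\Omega))$; (iii) freezing $\rho$, $h$ and the advecting fields $\tilde\bu,\tilde\bw$ in all nonlinear terms turns \eqref{eq:momento_lineal}--\eqref{eq:incompresibilidad} into a linear nonstationary Stokes system for $(\bu,p)$ and \eqref{eq:momento_angular} into a linear uniformly elliptic evolution equation for $\bw$ (now decoupled), each solved by a Galerkin method in the eigenbasis of $A$, respectively of $L$ — the $\rho$-weighted mass matrices being invertible since $\rho\ge\alpha>0$ — passing to the limit by the energy identity \eqref{eq:ml_and_ma_times_ut_wt} and its differentiated version \eqref{eq:lem:estimates3_full}, with the pressure $p$ recovered from the Stokes system by de Rham's lemma.

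\emph{Fixed point.} Since the estimates of Theorem~\ref{teo:global_estimates} were derived from the structure of the equations alone, they apply verbatim to $\Lambda(\tilde\bu,\tilde\bw)$; hence for $R$ large enough (depending on the data and on $\|(f,g)\|_{[H^1(0,T)]^2}$) and $T_*=\min\{T_1,T_2\}$ small enough, $\Lambda$ maps $\mathcal B$ into itself. To extract a fixed point I would compare the images of two elements of $\mathcal B$: the density difference is controlled in $C([0,T_*];L^2(\Omega))\cap L^\infty(\Omega\times[0,T_*])$ by $\|\tilde\bu_1-\tilde\bu_2\|_{L^1([0,T_*];\bL^\infty(\Omega))}$ through the flow, the potential difference by the elliptic estimate, and subtracting the two linear momentum systems and testing with $\bu_1-\bu_2$ and $\bw_1-\bw_2$, using the uniform bounds on $\mathcal B$ and Gronwall's inequality, yields — after a further reduction of $T_*$ — a contraction in the weaker norm of $C([0,T_*];\bL^2(\Omega))\cap L^2([0,T_*];\bH^1_0(\Omega))$, for which $\mathcal B$ is complete (it is closed for this metric by weak-$\ast$ compactness and lower semicontinuity of the strong norms). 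Banach's fixed point theorem then produces $\{\bu,\bw,\rho,p,h\}$; the regularity \eqref{eq:reg_of_u}--\eqref{eq:reg_of_rho} follows from the uniform bounds in the strong norm together with the standard upgrade from weak to strong continuity in time, and the integral identities and the initial and boundary conditions pass to the limit directly, so that $\{\bu,\bw,\rho,p,h\}$ is a solution in the sense of Definition~\ref{def:strong_solutions_dp}.

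\emph{Uniqueness, and the main obstacle.} For uniqueness I would take two solutions on a common interval, observe that their differences solve the same system with zero forcing and zero data, bound the density difference as above, test the momentum equations against the velocity and angular-velocity differences, and conclude by Gronwall that all differences vanish. The real difficulty, both here and in the contraction step, is the low regularity transported by the density: differences of densities are controlled in no space better than $L^\infty$ and $C([0,T_*];L^2)$, so the contraction cannot be closed in the natural energy space of the iterates and must be carried out in a weaker metric, with the higher a priori bounds playing the role of compactness; keeping the terms containing $\rho_t=-\tilde\bu\cdot\nabla\rho$ and $\nabla\rho$ under control in the differentiated estimates — exactly the terms already handled in Section~\ref{sec:priori_estimates} and, for the incompressible Navier--Stokes analogue, in \cite{boldrini_2003,fan_2008,simon_1990} — is where the bulk of the work lies.
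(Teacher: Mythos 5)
Your overall strategy is sound and, for the uniqueness half, coincides with what the paper actually does: the paper derives uniqueness from the continuous-dependence estimate of Lemma~\ref{lema:direct_problem_solution}, which, specialized to $f_1=f_2$ and $g_1=g_2$, is exactly your energy estimate for the difference of two solutions followed by Gronwall. For existence, however, the paper offers only a one-sentence deferral to the semi-Galerkin construction of Boldrini, Rojas-Medar and Fern\'andez-Cara \cite{boldrini_2003}, in which the nonlinear system itself is approximated spectrally in the velocity variables while the density is transported exactly along the approximate characteristics, and a solution is extracted by compactness. Your route --- linearize, solve the decoupled transport/Neumann/Stokes/elliptic subproblems, and run a Banach iteration that is a self-map in the strong norm but a contraction only in the weaker metric $C([0,T_*];\bL^2(\Omega))\cap L^2([0,T_*];\bH^1_0(\Omega))$ --- is genuinely different and more self-contained; it buys an explicit existence-with-uniqueness argument at the price of having to re-derive the a priori bounds for the linearized iterates. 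On that point your phrase ``apply verbatim'' is too quick: the estimates of Theorem~\ref{teo:global_estimates} are stated for solutions of the nonlinear system, and for the iterates the trilinear terms carry $\tilde\bu$ and $\bu$ in different slots, so the differential inequalities must be rechecked with constants uniform over the ball $\mathcal B$ (routine, but not literal). A second small imprecision: in the contraction step you invoke an $L^\infty(\Omega\times[0,T_*])$ bound on the density difference via the flow, which would require control of $\|\tilde\bu_1-\tilde\bu_2\|_{L^1([0,T_*];\bL^\infty(\Omega))}$ --- a quantity not dominated by your weak metric in three dimensions. The $L^2$ bound obtained as in \eqref{lema:direct_problem_solution:ec1}, using the H\"older triple $L^s$--$L^{2s/(s-2)}$--$L^2$ together with the Poincar\'e--Sobolev inequality \eqref{eq:spoingag}, is what is both available and sufficient, and it is what the paper uses.
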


We note the proof of existence of solutions can be devolping by
applying the ideas of Boldrini~et. al. \cite{boldrini_2003}.
Meanwhile, the uniqueness is a straightforward consequence  
of a continuous dependence of the system unknowns with respect
to the source coefficients, which is proved in the following
Lemma.

\begin{lemma}
\label{lema:direct_problem_solution}
Consider that the functions $\rho_0,\bv_0,\bw_0$ and $h$ satisfying the hypothesis
(H$_1$)-(H$_4$). Suppose that $\{\bu_i,\bw_i,\rho_i,p_i,h_i\}$, $i=1,2$,
are two solutions of the direct problem
\eqref{eq:momento_lineal}-\eqref{eq:direct_problem_ic}
and \eqref{eq:helmholtz_f_uno}-\eqref{eq:helmholtz_f_cuatro}
corresponding to the coefficients $(f_i,g_i)\in [H^1(0,T)]^2$, $i=1,2$,
respectively.
There exists $C$ independent of  $(f_i,g_i)$, $i=1,2$, such that
the following estimate hold:
\begin{eqnarray}
&&\| \bu_1-\bu_2\|_{L^\infty([0,t];\bL^2(\Omega))}
+\|\bw_1-\bw_2\|_{L^\infty([0,t];\bL^2(\Omega))}
\nonumber\\&&
\hspace{3.1cm}
+\|\rho_1-\rho_2\|_{L^\infty([0,t];L^2(\Omega))}
+\|\nabla (h_1-h_2)\|_{L^\infty([0,t];\bL^2(\Omega))}
\nonumber\\&&
\hspace{3.1cm}
\le C\Big( \|f_1-f_2\|_{L^2([0,t])} + \|g_1-g_2\|_{L^2([0,t])}\Big)
\label{eq:continous_dependence}
\end{eqnarray}
for all $t$ belongs to the maximal interval where the solutions
are defined.

\end{lemma}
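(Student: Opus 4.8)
The plan is to derive energy estimates for the differences of the two solutions. Introduce the notation $\bu=\bu_1-\bu_2$, $\bw=\bw_1-\bw_2$, $\sigma=\rho_1-\rho_2$, $P=p_1-p_2$, $\eta=h_1-h_2$, $\phi=f_1-f_2$, $\psi=g_1-g_2$, and write the equations satisfied by these differences. Subtracting the momentum equations \eqref{eq:momento_lineal} and \eqref{eq:momento_angular} for the two solutions, and similarly \eqref{eq:ecuacion_continuidad} and the Helmholtz system \eqref{eq:helmholtz_f_dos}--\eqref{eq:helmholtz_f_cuatro}, produces linear equations in $(\bu,\bw,\sigma,\eta)$ with forcing terms that are either bilinear in the differences and one of the (bounded) reference solutions, or proportional to $\phi$ and $\psi$. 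The key structural point is that every troublesome nonlinear term carries at least one factor from $\{\bu,\bw,\sigma,\nabla\eta\}$, so after testing against the natural multipliers the right-hand side will be controlled by the $L^2$-norm of the difference vector times a locally bounded coefficient, plus $C(|\phi(t)|^2+|\psi(t)|^2)$.

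Concretely, I would first test the difference of \eqref{eq:momento_lineal} with $\bu$ and the difference of \eqref{eq:momento_angular} with $\bw$, and add. The viscous terms yield $(\mu+\mu_r)\|\nabla\bu\|_{\bL^2}^2$ and the coercive part of $L$ applied to $\bw$; the coupling terms $2\mu_r(\curl\bw,\bu)+2\mu_r(\curl\bu,\bw)$ are absorbed by Young's inequality into the dissipation. The convective and density-difference terms of the form $\int(\rho_1\bu_1\otimes\bu_1-\rho_2\bu_2\otimes\bu_2):\nabla\bu$ are split as $\rho_1$ times a difference of quadratics plus $\sigma$ times a reference quadratic; using Lemma~\ref{lema:aprioriestimatesforrho} ($\alpha\le\rho_i\le\beta$), the a priori bounds from Theorem~\ref{teo:global_estimates} on the reference solutions (so their $\bH^2$, $\bL^\infty$, $\bL^6$ norms are bounded on the common interval), and the inequalities \eqref{eq:spoingag}, these are dominated by $\varepsilon\|\nabla\bu\|_{\bL^2}^2+C_\varepsilon(\|\bu\|_{\bL^2}^2+\|\sigma\|_{L^2}^2)$. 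The source terms $\int\rho_1 f_1(\nabla h_1-\bm)\cdot\bu-\int\rho_2 f_2(\nabla h_2-\bm)\cdot\bu$ decompose into $f_1\rho_1\nabla\eta\cdot\bu$, $\phi\,\rho_1(\nabla h_2-\bm)\cdot\bu$, and $f_2\,\sigma(\nabla h_2-\bm)\cdot\bu$, each bounded using the uniform bounds on $h_i$ and $f_i$ in terms of $\|\nabla\eta\|_{\bL^2}^2+\|\bu\|_{\bL^2}^2+\|\sigma\|_{L^2}^2+|\phi(t)|^2$; analogously for $\bw$ and $g$. For the density difference I would test the subtracted \eqref{eq:ecuacion_continuidad}, namely $\sigma_t+\bu_1\cdot\nabla\sigma+\bu\cdot\nabla\rho_2=0$, with $\sigma$ to get $\tfrac{d}{dt}\|\sigma\|_{L^2}^2\le C\|\nabla\rho_2\|_{\bL^q}\|\bu\|_{\bL^6}\|\sigma\|_{L^{2q/(q-2)}}$ and close it with interpolation; since we only need the $\bL^2$-level estimate for $\bu$ and $\bw$ on the left, I would in fact first establish an $\bH^1$-bound on $\bu,\bw$ from the step above (which the Stokes/elliptic regularity and the multiplier $\bu,\bw$ provide) so that $\|\bu\|_{\bL^6}$ is controlled. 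Finally, for $\eta$ I apply $\partial_t$ to the difference of \eqref{eq:helmholtz_f_dos}, which gives $\diver(\rho_1\nabla\eta_t)=-\diver(\sigma_t\nabla h_2)+\diver(\sigma_t\bm)-\diver((\rho_1)_t\nabla\eta)$ type terms, test with $\eta_t$, and recover $\|\nabla\eta\|_{\bL^2}$ controlled by the others (or, more simply, differentiate \eqref{eq:helmholtz_f_dos} difference in space and use elliptic estimates to bound $\|\nabla\eta\|_{\bL^2}$ directly by $\|\sigma\|_{L^2}$ up to the bounded data).

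Adding the four differential inequalities, all the dissipative terms on the left are nonnegative, and what remains is
\begin{eqnarray*}
\frac{d}{dt}\Big(\|\bu\|_{\bL^2}^2+\|\bw\|_{\bL^2}^2+\|\sigma\|_{L^2}^2+\|\nabla\eta\|_{\bL^2}^2\Big)
\le C(t)\Big(\|\bu\|_{\bL^2}^2+\|\bw\|_{\bL^2}^2+\|\sigma\|_{L^2}^2+\|\nabla\eta\|_{\bL^2}^2\Big)
+C\big(|\phi(t)|^2+|\psi(t)|^2\big),
\end{eqnarray*}
where $C(t)$ is integrable on the maximal interval thanks to Theorem~\ref{teo:global_estimates}. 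Since the initial data coincide, the difference vector vanishes at $t=0$, so Gronwall's inequality gives
\begin{eqnarray*}
\|\bu(t)\|_{\bL^2}^2+\|\bw(t)\|_{\bL^2}^2+\|\sigma(t)\|_{L^2}^2+\|\nabla\eta(t)\|_{\bL^2}^2
\le C\int_0^t\big(|\phi(s)|^2+|\psi(s)|^2\big)\,ds,
\end{eqnarray*}
and taking the supremum over $t$ and a square root yields \eqref{eq:continous_dependence}. Uniqueness of the direct problem follows by setting $f_1=f_2$, $g_1=g_2$.

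I expect the main obstacle to be the bookkeeping of the nonlinear difference terms: showing that each of $\rho_1\bu_1\otimes\bu_1-\rho_2\bu_2\otimes\bu_2$, the analogous angular-momentum term, and the transport term $\bu\cdot\nabla\rho_2$ can genuinely be absorbed, i.e. that no factor of $\|\nabla\bu\|_{\bL^2}$ or $\|\nabla\bw\|_{\bL^2}$ survives with a coefficient that is not small or not locally integrable. This is where one must be careful to test at the right level (obtaining an $\bH^1$ a priori bound on $\bu,\bw$ as an intermediate step, using Lemma~\ref{prop:auxiliar_stokes_elliptic_results} and Lemma~\ref{lem:u_ut_w_wt_estimate} for the reference solutions) and to use the Poincaré and Gagliardo--Nirenberg inequalities \eqref{eq:spoingag} with the correct exponents; the density term in particular only closes because $\nabla\rho_2\in L^\infty(0,T_*;\bL^q)$ from \eqref{teo:global_estimates_kapa2}. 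The coupling between $\sigma$ and $\nabla\eta$ through the elliptic problem is routine once the elliptic regularity estimate from the proof of Lemma~\ref{lema:aprioriestimatesforhandr} is invoked.
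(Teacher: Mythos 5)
Your proposal is correct and follows essentially the same route as the paper: $L^2$-level energy estimates obtained by testing the subtracted momentum and continuity equations with the differences themselves, absorption of the gradient terms into the viscous dissipation via Young's inequality using the a priori regularity of the reference solutions, the algebraic elliptic bound $\|\nabla(h_1-h_2)\|_{\bL^2}\le C\|\rho_1-\rho_2\|_{L^2}$ obtained by testing the subtracted Helmholtz equation with $h_1-h_2$ (your second, simpler option is exactly what the paper does, with no time differentiation), and Gronwall. The only bookkeeping caveat is that $\|\nabla(h_1-h_2)\|_{\bL^2}^2$ should not sit under the $d/dt$ in the combined inequality; it is controlled algebraically by $\|\rho_1-\rho_2\|_{L^2}^2$ and substituted into the Gronwall step, as in the paper.
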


\begin{proof}
In order to simplify the presentation of the estimates
we introduce the following notation
\begin{eqnarray*}
&&\delta \bu=\bu_1-\bu_2,\quad 
\delta \rho=\rho_1-\rho_2,\quad
\delta h=h_1-h_2,\quad \delta \bw=\bw_1-\bw_2,
\\&&
\delta f=f_1-f_2,\quad
\delta g=g_1-g_2,\quad
\delta p=p_1-p_2
\quad\mbox{and}\quad
\overline{s}=2s/(s-2),
\end{eqnarray*}
where $s\in [2,\infty[$.
Hence,
by algebraic rearrangements  of both forward problems ($i=1$ and $i=2$), we get the
following system
\begin{eqnarray}
&&\rho_1\delta \bu_t
+\rho_1(\bu_1\cdot\nabla )\delta \bu
-(\mu+\mu_r)\Delta\delta \bu
+\nabla \delta p
=-\delta  \rho (\bu_2)_t
+\Big[(\delta \rho \bu_1+\rho_2 \delta \bu)\cdot\nabla \Big]\bu_2
\nonumber
\\&&
\hspace{2cm}
+2\mu_r\curl \delta \bw 
+\delta \rho f_1\nabla h_1
+\rho_2 \delta f\nabla h_1
\nonumber
\\&&
\hspace{2cm}
+\rho_2 f_2 \nabla \delta h
-\Big(\delta \rho f_1+\rho_2 \delta f\Big)\bm,
\quad\mbox{in}\quad Q_T,
\label{eq:mom_lin_dif}
\\&& 
\diver\;\delta \bu=0,
	\quad\mbox{in}\quad Q_T,
\label{eq:incom_dif}
\\&&
\rho_1\delta \bw_t
+\rho_1(\bw_1\cdot\nabla )\delta \bw
-(c_a+c_d)\Delta\delta \bw
=-\delta \rho (\bw_2)_t
+\Big[(\delta \rho \bw_1+\rho_2\delta \bw)\cdot\nabla \Big]\bw_2-4\mu_r\delta \bw
\nonumber
\\&&
\hspace{2cm}
	+(c_o+c_d-c_a)\nabla\diver\delta \bw 
	+2\mu_r \curl\delta \bu
	+\Big(\delta \rho g_1+\rho_2 \delta g\Big)\bq,
	\quad\mbox{in}\quad Q_T,
\label{eq:mom_ang_dif}
\\&&
 \delta \rho_t+\delta \bu\cdot \nabla \rho_1+\bu_2\cdot \nabla \delta \rho=0,
	\quad\mbox{in}\quad Q_T,
\label{eq:ec_con_dif}
\\&&
\delta \bu(\bx,t)=\delta \bw(\bx,t)=0,
	\quad\mbox{on}\quad \Sigma_T,
\label{eq:dir_prob_bc_dif}
\\&&
\delta \bu(\bx,0)=\delta \bw(\bx,0)=\delta \rho(\bx,0)=0,
	\quad\mbox{on}\quad \Omega,
\label{eq:dir_prob_ic_dif}
\\&&
\diver(\rho_1\nabla \delta h)=\diver(\delta \rho (\bm-\nabla h_2)),
\;\mbox{in}\quad\Omega,
	\label{eq:helm_f_dos_dif}
\\&&
\frac{\partial \delta h}{\partial \bn}(\bx,t)=0,
	\;\mbox{on}\quad \Sigma_T,
	\label{eq:helm_f_tres_dif}
\\&&
\int_{\Omega}\delta h(\bx,t)d\bx=0, 
	\;t\in[0,T].
	\label{eq:helm_f_cuatro_dif}
\end{eqnarray}
Now,  to prove \eqref{eq:continous_dependence} we proceed in two big steps: first we obtain
five a priori estimates for 
the system \eqref{eq:mom_lin_dif}-\eqref{eq:helm_f_cuatro_dif}
and then we apply the Gronwall inequality.

First, by equations \eqref{eq:incompresibilidad} for $u_2$ 
and \eqref{eq:ec_con_dif}; 
the boundary condition \eqref{eq:dir_prob_bc_dif};
integration by parts; the H\"older and Young's inequalities; 
and \eqref{eq:spoingag}
for $p=2$ and $q=\overline{s}\in ]2,6[$, we have that
\begin{eqnarray}
 \frac{1}{2}\frac{d}{dt}\int_{\Omega}\delta \rho^2(\bx,t)d\bx
&=&\int_{\Omega}(\delta \rho_t\delta \rho)(\bx,t)d\bx
=\int_{\Omega}-\Big[\delta \bu\cdot\nabla\rho_1
	+\bu_2\cdot\nabla\delta \rho\Big]\delta \rho(\bx,t)d\bx
\nonumber
\\
&=&
-\int_{\Omega}\delta \rho(\delta \bu\cdot\nabla\rho_1)(\bx,t)d\bx
+\frac{1}{2}\int_{\Omega}\delta \rho^2\diver(\bu_2)(\bx,t)d\bx
\nonumber
\\
&\le&
\|\nabla\rho_1(\cdot,t)\|_{L^s(\Omega)}
\|\delta \bu(\cdot,t)\|_{L^{\overline{s}}(\Omega)}
\|\delta \rho(\cdot,t)\|_{L^2(\Omega)}.
\nonumber
\\
&\le&
 \frac{1}{2\epsilon_\bu}\|\nabla\rho_1(\cdot,t)\|^2_{L^s(\Omega)}
\|\delta \rho(\cdot,t)\|^2_{L^2(\Omega)}
+
\frac{\epsilon_\bu}{2}C_{poi}
\|\nabla\delta \bu(\cdot,t)\|^2_{L^2(\Omega)},
\label{lema:direct_problem_solution:ec1}
\end{eqnarray}
where $\epsilon_\bu>0$ is the parameter used for the Young's inequality.
In the second place, by equation \eqref{eq:ecuacion_continuidad}
for $\rho_1$ and the boundary condition \eqref{eq:dir_prob_bc_dif},
we note that the left hand side of 
\eqref{eq:mom_lin_dif} multiplied by $\delta \bu$ 
can be integrated by parts and  simplified as follows
\begin{eqnarray}
&&
\int_{\Omega}\Big[\rho_1\delta \bu_t
+\rho_1(\bu_1\cdot\nabla )\delta \bu
-(\mu+\mu_r)\Delta\delta \bu
+\nabla \delta p\Big]\cdot\delta \bu (x,t) dx
\nonumber\\
&&
\hspace{0.5cm}
=\int_{\Omega}\Big[\frac{1}{2}\Big\{(\rho_1|\delta \bu|^2)_t-(\rho_1)_t|\delta \bu|^2\Big\}
+\rho_1[(\bu_1\cdot\nabla )\delta \bu]\cdot\delta \bu
\nonumber\\
&&
\hspace{1.5cm}
-(\mu+\mu_r)\Delta\delta \bu\cdot\delta \bu
+\nabla \delta p\cdot\delta \bu\Big](\bx,t)d\bx
\nonumber
\\
&&
\hspace{0.5cm}
=\frac{1}{2}\frac{d}{dt}
\int_{\Omega}(\rho_1|\delta \bu|^2)(\bx,t)d\bx
+(\mu+\mu_r)\int_{\Omega}|\nabla\delta \bu|^2(\bx,t)d\bx.
\label{lema:direct_problem_solution:ec3}
\end{eqnarray}
Then, a multiplication of \eqref{eq:mom_lin_dif} by $\delta u$,
integration by parts and  application of the H\"older inequality leads to 
\begin{eqnarray}
&&
\frac{1}{2}\frac{d}{dt}
\int_{\Omega}(\rho_1|\delta \bu|^2)(\bx,t)d\bx
+(\mu+\mu_r)\int_{\Omega}|\nabla\delta \bu|^2(\bx,t)d\bx
\nonumber
\\
&&
\hspace{0.3cm}
\le
\|(\bu_2)_t(\cdot,t)\|_{\bL^s}\|\delta \rho(\cdot,t)\|_{\bL^2}
	\|\delta \bu(\cdot,t)\|_{\bL^s}
+\|\bu_1(\cdot,t)\|_{\bL^{\infty}}\|\nabla \bu_2(\cdot,t)\|_{\bL^s}
	\|\delta \rho(\cdot,t)\|_{\bL^2}
	\|\delta \bu(\cdot,t)\|_{\bL^{\overline{s}}}
\nonumber
\\
&&
\hspace{0.3cm}+
\|\rho_2(\cdot,t)\|_{\bL^\infty}\|\nabla \bu_2(\cdot,t)\|_{\bL^s}
	\|\delta \bu(\cdot,t)\|_{\bL^2}
	\|\delta \bu(\cdot,t)\|_{\bL^{\overline{s}}}
+2\mu_r \|\nabla\delta \bw(\cdot,t)\|_{\bL^2} \|\delta \bu(\cdot,t)\|_{\bL^2}
\nonumber
\\
&&
\hspace{0.3cm}+
|f_1(t)|\|\nabla h_1(\cdot,t)\|_{\bL^s}\|\delta \rho(\cdot,t)\|_{\bL^2}
	\|\delta \bu(\cdot,t)\|_{\bL^{\overline{s}}}
\nonumber
\\
&&
\hspace{0.3cm}	
+|\delta f(t)|\|\nabla h_1(\cdot,t)\|_{\bL^s}\|\rho_2(\cdot,t)\|_{\bL^\infty}
	\|\delta \bu(\cdot,t)\|_{\bL^2}|\Omega|^{1/\overline{s}}
\nonumber
\\
&&
\hspace{0.3cm}+
|f_2(t)|\|\nabla \delta h(\cdot,t)\|_{\bL^2}\|\rho_2(\cdot,t)\|_{\bL^\infty}
	\|\delta \bu(\cdot,t)\|_{\bL^2}
	+|f_1(t)|\|\bm(\cdot,t)\|_{\bL^\infty}
	\|\delta \rho(\cdot,t)\|_{\bL^2}\|\delta \bu(\cdot,t)\|_{\bL^2}
\nonumber
\\
&&
\hspace{0.3cm}+
|\delta f(t)|\|\bm(\cdot,t)\|_{\bL^\infty}
\|\rho_2(\cdot,t)\|_{L^\infty}\|\delta \bu(\cdot,t)\|_{\bL^2}
|\Omega|^{1/2}.
\label{lema:direct_problem_solution:ec4}
\end{eqnarray}
Hence by the Young's inequality and \eqref{eq:spoingag}, 
we have that 
there exists $\Gamma^\bu_i:=\Gamma^\bu_i(t)>0$, $i\in\{1,2,3\}$, such that
(see \ref{app:constantes_gamma} for details)
\begin{eqnarray}
&&
\frac{1}{2}\frac{d}{dt}
\int_{\Omega}(\rho_1|\delta \bu|^2)(\bx,t)d\bx
+(\mu+\mu_r)\int_{\Omega}|\nabla\delta \bu|^2(\bx,t)d\bx
\nonumber
\\
&&
\hspace{2cm}
\le
\Gamma^u_1 \Big(\|\sqrt{\rho_1}\delta \bu(\cdot,t)\|^2_{\bL^2(\Omega)}
	+\|\delta \rho(\cdot,t)\|^2_{\bL^2(\Omega)}\Big)
	+\Gamma^\bu_2\|\nabla \delta h(\cdot,t)\|^2_{\bL^2(\Omega)}
	+\Gamma^\bu_3|\delta f(t)|^2
\nonumber
\\
&&
\hspace{2.3cm}
+2\epsilon_\bu C_{poi}\;\|\nabla\delta \bu(\cdot,t)\|^2_{\bL^2(\Omega)}
+\frac{\epsilon_\bw}{2}\;\|\nabla\delta \bw(\cdot,t)\|^2_{\bL^2(\Omega)}.
\label{lema:direct_problem_solution:ec5}
\end{eqnarray}
For the third estimate, we start from \eqref{eq:mom_ang_dif}
and proceeding  by a similar reasoning to the steps 
\eqref{lema:direct_problem_solution:ec3}-\eqref{lema:direct_problem_solution:ec5}
we deduce that
there exists $\Gamma^\bw_i:=\Gamma^\bw_i(t)>0$, $i\in\{1,2\}$, such that
(see \ref{app:constantes_gamma})
\begin{eqnarray}
&&
\frac{1}{2}\frac{d}{dt}
\int_{\Omega}(\rho_1|\delta \bw|^2)(\bx,t)d\bx
+(c_a+c_d)\int_{\Omega}|\nabla\delta \bw|^2(\bx,t)d\bx
\\
&&
\hspace{2cm}
+(c_0+c_d-c_a)\int_{\Omega}|\diver\delta \bw|^2(\bx,t)d\bx
+4u_r\int_{\Omega}|\delta \bw|^2(\bx,t)d\bx
\nonumber
\\
&&
\hspace{2cm}
\le
\Gamma^\bw_1 \Big(\|\sqrt{\rho_1}\delta \bw(\cdot,t)\|^2_{\bL^2(\Omega)}
+\|\delta \rho(\cdot,t)\|^2_{\bL^2(\Omega)}\Big)
+\Gamma^\bw_2|\delta g(t)|^2
\nonumber
\\
&&
\hspace{2.3cm}
+\frac{\epsilon_\bu}{2}\;\|\nabla\delta \bu(\cdot,t)\|^2_{\bL^2(\Omega)}
+2\epsilon_\bw C_{poi}\;\|\nabla\delta \bw(\cdot,t)\|^2_{\bL^2(\Omega)}.
\label{lema:direct_problem_solution:ec6}
\end{eqnarray}
In the fourth place, we deduce an estimate related to $\delta h$.
Indeed, by equations for $\delta h$ given on 
\eqref{eq:helm_f_dos_dif}-\eqref{eq:helm_f_cuatro_dif},
integration by parts,
the H\"older and Minkowski inequalities,
and Lemma~\ref{lema:aprioriestimatesforrho},
we deduce that
\begin{eqnarray*}
&&\alpha\|\nabla \delta h(\cdot,t)\|^2_{\bL^2(\Omega)}
\le \int_{\Omega} \Big(\rho_1 |\nabla \delta h|^2\Big)(\bx,t) d\bx
=-\int_{\Omega}\diver(\rho_1 \nabla \delta h)\delta h (\bx,t)d\bx
\nonumber\\&&
\hspace{2cm}
=-\int_{\Omega}\Big[\diver(\delta \rho (\bm-\nabla h_2))\delta h\Big](\bx,t) d\bx
=\int_{\Omega}\Big[\delta \rho (\bm-\nabla h_2)\nabla \delta h\Big](\bx,t) d\bx
\nonumber\\&&
\hspace{2cm}
\le \|\delta \rho (\bm-\nabla h_2)(\cdot,t)\|_{\bL^2(\Omega)}
\|\nabla \delta h (\cdot,t)\|_{\bL^2(\Omega)}
\nonumber\\&&
\hspace{2cm}
\le 
\Big(\|\bm(\cdot,t)\|_{\bL^\infty(\Omega)}
+\|\nabla h_2(\cdot,t)\|_{\bL^\infty(\Omega)}\Big)
\|\delta \rho(\cdot,t)\|_{L^2(\Omega)}
\|\nabla \delta h(\cdot,t) \|_{\bL^2(\Omega)}.
\end{eqnarray*}
Thus, we have the following two estimates
\begin{eqnarray}
\|\nabla \delta h(\cdot,t)\|_{\bL^2(\Omega)}
\le 
\frac{1}{\alpha}
\Big(\|\bm(\cdot,t)\|_{\bL^\infty(\Omega)}
+\|\nabla h_2(\cdot,t)\|_{\bL^\infty(\Omega)}\Big)
\|\delta \rho(\cdot,t)\|_{L^2(\Omega)}.
\label{lema:direct_problem_solution:ec7}
\end{eqnarray}

\vspace{0.5cm}
From 
\eqref{lema:direct_problem_solution:ec1} 
and \eqref{lema:direct_problem_solution:ec5},
selecting $\epsilon_\bu=2(\mu+\mu_r)(5C_{poi}+1)^{-1}$
and $\epsilon_\bw=2(c_a+c_d)(5C_{poi}+1)^{-1}$,
we deduce that
\begin{eqnarray*}
&&\frac{d}{dt}
\int_{\Omega}(\rho_1|\delta \bu|^2+\rho_1|\delta \bw|^2+\delta \rho^2)(\bx,t)d\bx
+\int_{\Omega}(\nabla\delta h)^2(\bx,t)d\bx
\\&&
\hspace{2cm}
\le 
\Gamma^\delta(t)
\int_{\Omega}(\rho_1|\delta \bu|^2+\rho_1|\delta \bw|^2+\delta \rho^2)(\bx,t)d\bx
+\Gamma^\bu_3(t)|\delta f(t)|^2
+\Gamma^\bw_3(t)|\delta g(t)|^2.
\end{eqnarray*}
where 
$
\Gamma^\delta(t)=\max\{\Gamma^\bu_1(t),\Gamma^\bw_1(t),C^\delta(t)\},
$
with $C^\delta$ defined as follows
\begin{eqnarray*}
C^\delta(t)&=&\frac{1}{2\epsilon_\bu}\|\nabla \rho_1(\cdot,t)\|_{\bL^2(\Omega)}
+\Gamma^\bu_1(t)+\frac{\Gamma^\bu_2(t)}{\alpha^2}
\Big(\|\bm(\cdot,t)\|_{\bL^\infty(\Omega)}
+\|\nabla h_2(\cdot,t)\|_{\bL^\infty(\Omega)}\Big)^2
\\
&&+\Gamma^\bw_1(t)+\frac{\Gamma^\bw_2(t)}{\alpha^2}
\Big(\|\bq(\cdot,t)\|_{\bL^\infty(\Omega)}
+\|\nabla g_2(\cdot,t)\|_{\bL^\infty(\Omega)}\Big)^2.
\end{eqnarray*}
We note that 
$\|\Gamma^\delta\|_{L^1(0,T)}<\infty $ and $\Gamma^\bu_3(t)<\infty$.
Hence, we complete the proof of \eqref{eq:continous_dependence} by application of the
Gronwall inequality.
\end{proof}

\section{Well-posedness of the inverse problem}
\label{sec:ip}

The main result of this section is the derivation of
a well-posedness result for the inverse problem, which is 
formalized by the following theorem:
\begin{theorem}
\label{teo:wellposeIP}
Assume that (H$_1$)-(H$_7$) are satisfied. Then there exists 
a unique solution $\{\bu,\bw,\rho,p,h,f,g\}$ of the inverse problem
defined on a small enough time $T_{\star}$.
\end{theorem}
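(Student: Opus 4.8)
The plan is to carry out step E$_3$: rewrite the inverse problem as a fixed point equation for the coefficient pair $(f,g)$ and solve it with the Tikhonov fixed point theorem (Theorem~\ref{teo:tikhonov}), using Theorem~\ref{teo:direct_problem_solution} as the direct solver and Theorem~\ref{teo:global_estimates} and Lemma~\ref{lema:direct_problem_solution} as the estimates that close its hypotheses. \emph{Deriving the operator equation.} For $(f,g)$ in a ball of $[H^1([0,T])]^2$ let $\{\bu,\bw,\rho,p,h\}$ be the associated direct solution. Differentiating the two conditions in \eqref{eq:direct_problem_ovc} in $t$, substituting \eqref{eq:formulacion_variacional_u} with $\bv=\bpsi^{\bu}$ (which lies in $V$ by (H$_5$), so the pressure drops out) and \eqref{eq:formulacion_variacional_w} with $\bvarphi=\bpsi^{\bw}\in D(L)$, and then using \eqref{eq:helmholtz_f_uno}, the identity \eqref{eq:ident_stokes}, the symmetry of $L$ on $D(L)$, and an integration by parts that transfers every spatial derivative onto the fixed test functions, we arrive at
\begin{align*}
f(t)\,\Lambda^{\bu}(t)&=(\phi^{\bu})'(t)-\int_{\Omega}\rho\,\bu\otimes\bu:\nabla\bpsi^{\bu}\,d\bx-(\mu+\mu_r)\int_{\Omega}\bu\cdot\Delta\bpsi^{\bu}\,d\bx-2\mu_r\int_{\Omega}\bw\cdot\curl\bpsi^{\bu}\,d\bx,\\
g(t)\,\Lambda^{\bw}(t)&=(\phi^{\bw})'(t)-\int_{\Omega}\rho\,\bw\otimes\bw:\nabla\bpsi^{\bw}\,d\bx+\int_{\Omega}\bw\cdot L\bpsi^{\bw}\,d\bx-2\mu_r\int_{\Omega}\bu\cdot\curl\bpsi^{\bw}\,d\bx,
\end{align*}
with $\Lambda^{\bu}(t)=\int_{\Omega}\rho(\nabla h-\bm)\cdot\bpsi^{\bu}\,d\bx$ and $\Lambda^{\bw}(t)=\int_{\Omega}\rho\,\bq\cdot\bpsi^{\bw}\,d\bx$. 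At $t=0$ these depend only on the data, and by (H$_7$) $|\Lambda^{\bu}(0)|\ge2h^{\epsilon}$, $|\Lambda^{\bw}(0)|\ge2r^{\epsilon}$; since $(\Lambda^{\bu})'$, $(\Lambda^{\bw})'$ are bounded on the ball uniformly in $(f,g)$ by Theorem~\ref{teo:global_estimates} and Lemma~\ref{lema:aprioriestimatesforhandr}, there is $T_{\star}\in\,]0,\min\{T_1,T_2\}]$ on which $|\Lambda^{\bu}|\ge h^{\epsilon}$, $|\Lambda^{\bw}|\ge r^{\epsilon}$. Dividing defines a map $\mathcal{A}=(\mathcal{A}_1,\mathcal{A}_2)$ on that ball in $[H^1([0,T_{\star}])]^2$, and (using the compatibility $\int_{\Omega}\rho_0\bu_0\cdot\bpsi^{\bu}=\phi^{\bu}(0)$ and its analogue, forced by (iii) of Definition~\ref{def:strong_solutions_ip}) a fixed point of $\mathcal{A}$ is exactly a solution of the inverse problem, and conversely.

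\emph{Self-mapping (the main point).} Fix $\eta\in\R^{+}$ and put $\mathcal{K}=\{(f,g):\|(f,g)\|_{[H^1([0,T_{\star}])]^2}\le\eta\}$. For $(f,g)\in\mathcal{K}$ each term of $\mathcal{A}(f,g)$ and of $(\mathcal{A}(f,g))'$ is — up to the division by $\Lambda^{\bu}$ or $\Lambda^{\bw}$, whose moduli stay bounded below — a finite sum of $\Omega$-integrals of products of $\rho,\bu,\bw,\nabla h$ (and of $\rho_t,\bu_t,\bw_t,\nabla h_t$), all bounded on $[0,T_{\star}]$ by Lemma~\ref{lema:aprioriestimatesforrho} and Theorem~\ref{teo:global_estimates}, against the fixed functions $\nabla\bpsi^{\bu},\Delta\bpsi^{\bu},\curl\bpsi^{\bu},\nabla\bpsi^{\bw},L\bpsi^{\bw},\curl\bpsi^{\bw},\bm,\bq$; in particular $\bu,\bw$ occur undifferentiated in space, which is what the integration by parts of the previous step achieves. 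Bounding each integral in $L^{\infty}([0,T_{\star}])$ and paying a power of $T_{\star}$ when passing to $L^2([0,T_{\star}])$ — and using $\|(\phi^{\bu})''\|_{L^2(0,T_{\star})}+\|(\phi^{\bw})''\|_{L^2(0,T_{\star})}\to0$ as $T_{\star}\to0$ by (H$_6$) — we obtain $\|\mathcal{A}(f,g)\|_{[H^1([0,T_{\star}])]^2}\le\omega(T_{\star})+T_{\star}^{1/2}Q(\eta)$, where $\omega(T_{\star})\to0$ and $Q$ is a fixed polynomial (coming from the bounds of Theorem~\ref{teo:global_estimates} of the form $\upkappa+\upkappa'\upeta$). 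Shrinking $T_{\star}$ once more gives $\mathcal{A}(\mathcal{K})\subset\mathcal{K}$.

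\emph{Continuity and existence.} $\mathcal{K}$ is convex and weakly compact in the separable Hilbert space $[H^1([0,T_{\star}])]^2$, on which the weak topology is metrizable. If $(f_n,g_n)\rightharpoonup(f,g)$ in $\mathcal{K}$, the compact embedding $H^1([0,T_{\star}])\hookrightarrow C([0,T_{\star}])$ gives $(f_n,g_n)\to(f,g)$ in $[L^2(0,T_{\star})]^2$; then Lemma~\ref{lema:direct_problem_solution} makes the corresponding direct solutions converge in the norms of \eqref{eq:continous_dependence}, which together with the uniform bounds of Lemma~\ref{lema:aprioriestimatesforrho} and Theorem~\ref{teo:global_estimates} yields $\mathcal{A}(f_n,g_n)\to\mathcal{A}(f,g)$ in $[L^2(0,T_{\star})]^2$, hence $\mathcal{A}(f_n,g_n)\rightharpoonup\mathcal{A}(f,g)$ in $[H^1([0,T_{\star}])]^2$ by the uniform $H^1$-bound. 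So $\mathcal{A}:\mathcal{K}\to\mathcal{K}$ is weakly continuous and Theorem~\ref{teo:tikhonov} gives a fixed point $(f,g)\in\mathcal{K}$; with its direct solution this is a solution $\{\bu,\bw,\rho,p,h,f,g\}$ of the inverse problem on $[0,T_{\star}]$, and $f,g\in H^1([0,T_{\star}])$.

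\emph{Uniqueness, and the main obstacle.} If $\{\bu_i,\bw_i,\rho_i,p_i,h_i,f_i,g_i\}$, $i=1,2$, both solve the inverse problem on $[0,T_{\star}]$, then $f_i=\mathcal{A}_1(f_i,g_i)$, $g_i=\mathcal{A}_2(f_i,g_i)$; subtracting, the data terms cancel and every remaining difference is an $\bL^2$-pairing, against the fixed test functions above, of quantities of the form $\delta\bu,\delta\bw,\delta\rho,\nabla\delta h,(\Lambda^{\bu}_1)^{-1}-(\Lambda^{\bu}_2)^{-1},\dots$, each controlled in $L^{\infty}([0,t];\bL^2)$ by $\|f_1-f_2\|_{L^2(0,t)}+\|g_1-g_2\|_{L^2(0,t)}$ through \eqref{eq:continous_dependence} (the other factors being uniformly bounded by Theorem~\ref{teo:global_estimates} and the lower bounds on $|\Lambda^{\bu}_i|,|\Lambda^{\bw}_i|$). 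Taking the $L^2(0,t)$-norm and extracting the factor $\sqrt{t}$ from Cauchy--Schwarz in time gives
\[
\|f_1-f_2\|^2_{L^2(0,t)}+\|g_1-g_2\|^2_{L^2(0,t)}\le C\int_0^t\Big(|f_1-f_2|^2+|g_1-g_2|^2\Big)(\tau)\,d\tau,\qquad t\in[0,T_{\star}],
\]
so $f_1\equiv f_2$, $g_1\equiv g_2$ by Gronwall, and then $\{\bu_1,\bw_1,\rho_1,p_1,h_1\}=\{\bu_2,\bw_2,\rho_2,p_2,h_2\}$ by the uniqueness part of Theorem~\ref{teo:direct_problem_solution}. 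The hard part is the self-mapping step: one must check that the estimates of Theorem~\ref{teo:global_estimates}, whose dependence on $\upeta$ is only polynomial, can — thanks to the integration by parts that expresses $\mathcal{A}$ through $\bL^2$-norms of the fields alone — be reorganized into the bound $\omega(T_{\star})+T_{\star}^{1/2}Q(\eta)$, letting $\eta$ be fixed first and $T_{\star}$ chosen small afterwards; the same rewriting is what makes the uniqueness estimate closable with only the regularity that \eqref{eq:continous_dependence} provides.
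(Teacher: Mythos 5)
Your proposal follows essentially the same route as the paper: the operator equation $\mathcal{R}(f,g)=(f,g)$ of Subsection~\ref{section:inv_operator_form}, the lower bounds \eqref{eq:remark_h7} coming from (H$_7$), the self-mapping and weak-continuity properties (Lemmas~\ref{lem:Rmapitself} and~\ref{lem:weaklycontinous}), Tikhonov's theorem for existence, and the continuous-dependence estimate \eqref{eq:continous_dependence} for uniqueness (the paper packages that last step as the power-contraction Lemma~\ref{lem:contraction} with the $\left(C^nT^n/n!\right)^{1/2}$ bound, while you invoke Gronwall directly; the two are equivalent). One small caveat: your displayed uniqueness inequality literally reads $X(t)\le C\,X(t)$ with $X(t)=\|f_1-f_2\|^2_{L^2(0,t)}+\|g_1-g_2\|^2_{L^2(0,t)}$, which is vacuous as written --- the right-hand side should be the nested integral $C\int_0^t X(\tau)\,d\tau$, as your preceding sentence (and the paper's Lemma~\ref{lem:contraction}) makes clear is what is intended.
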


The proof of Theorem~\ref{teo:wellposeIP} is detailed on 
subsection~\ref{subsec:proofteoIP}. It is based on a reformulation
of the inverse problem like as an operator equation of second kind 
and then by application of the fixed point argument.

\subsection{Formulation of the inverse problem
\eqref{eq:momento_lineal}-\eqref{eq:helmholtz_f_cuatro}
as and operator equation}
\label{section:inv_operator_form}

We define the nonlinear operator
\begin{eqnarray}
\begin{array}{rcclc}
\mathcal{R}&:&[H^1(0,T)]^2&\to&[H^1(0,T)]^2
\\
& &(f,g) &\mapsto & \displaystyle
(\mathcal{R}_1,\mathcal{R}_2):
	    =\left(\frac{\mathcal{N}_1}{\gamma_1},
	    \frac{\mathcal{N}_2}{\gamma_2}\right)
\end{array}
\label{eq:operador}
\end{eqnarray}
where 
\begin{eqnarray}
 \mathcal{N}_1(t)&=&
	\frac{d\phi^{\bu}}{dt}(t)-\int_{\Omega}\rho \bu\otimes \bu:\nabla \bpsi^{\bu} dx
	+(\mu +\mu_r)\Big(A \bu,\bpsi^{\bu} \Big)
	-2\mu_r \Big(\curl \bw,\bpsi^{\bu}\Big),
\label{eq:operador_N1}
\\
\mathcal{N}_2(t)&=&
	\frac{d\phi^{\bw}}{dt}(t)-\int_{\Omega}\rho \bw\otimes \bw:\nabla \bpsi^{\bw} dx
	+\Big(L \bw,\bpsi^{\bw} \Big)
	-2\mu_r \Big(\curl \bu,\bpsi^{\bw}\Big),
\label{eq:operador_N2}
\\
 \gamma_1(t)&=&\int_{\Omega}\rho(x,t)(\nabla h(x,t)- \bm(x,t))\cdot\bpsi^{\bu}(x)dx
	\quad\mbox{and}
\label{eq:operador_gamma1}
\\
 \gamma_2(t)&=&\int_{\Omega}\rho(x,t)\bq(x,t)\cdot\bpsi^{\bw}(x)dx.
\label{eq:operador_gamma2}
\end{eqnarray}
We note that the solvability of the inverse problem 
\eqref{eq:momento_lineal}-\eqref{eq:helmholtz_f_cuatro} is connected with the following
operator equation of the second kind 
\begin{eqnarray}
\hspace{1cm}
\mathcal{R}(f,g)=(f,g)
\quad
\mbox{over}
\quad
D:=\{(f,g)\in [H^1(0,T)]^2:\|(f,g)\|_{[H^1(0,T)]^2}\le \upeta\;\},
\label{eq:operador_ecuation}
\end{eqnarray}
where $\upeta\in\mathbb{R}^+$ will be constructed  in order to get
the solvability of~\eqref{eq:operador_ecuation} by application
of the fixed point argument.
More specifically, we have the following  characterization result
of the solvability of the inverse problem. 

\begin{theorem}
\label{teo:ip_vs_operator_ec}
Let (H$_1$)-(H$_7$) be satisfied. Then the following two
assertions are valid:
\begin{enumerate}
\item[(i)] If the inverse problem 
\eqref{eq:momento_lineal}-\eqref{eq:helmholtz_f_cuatro}
is solvable, then so is equation \eqref{eq:operador_ecuation}, and
\item[(ii)] If \eqref{eq:operador_ecuation}  is solvable 
and the following compatibility conditions
\begin{eqnarray}
\int_{\Omega}\rho(\bx,0)\bu(\bx,0)\cdot\bpsi^{\bu}(\bx)d\bx=\phi^{\bu}(0)
	\quad\mbox{and}\quad
	\int_{\Omega}\rho(\bx,0)\bw(\bx,0)\cdot\bpsi^{\bw}(\bx)d\bx=\phi^{\bw}(0)
\label{eq:compat_cond}
\end{eqnarray}
are satisfied, then there exists a solution of 
the inverse problem \eqref{eq:momento_lineal}-\eqref{eq:helmholtz_f_cuatro}.
\end{enumerate}
\end{theorem}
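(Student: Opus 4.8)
The plan is to exploit the equivalence between the overdetermination conditions \eqref{eq:direct_problem_ovc} and the operator equation \eqref{eq:operador_ecuation}, using the variational identities \eqref{eq:formulacion_variacional_u}--\eqref{eq:formulacion_variacional_w} as the bridge. The key observation is that the integral overdetermination condition, after differentiation in time, should coincide exactly with the weak formulation tested against the admissible functions $\bpsi^{\bu},\bpsi^{\bw}$ from hypothesis (H$_5$), which is precisely why those particular test functions were chosen.

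For part (i), suppose $\{\bu,\bw,\rho,p,h,f,g\}$ solves the inverse problem. Then the variational identities \eqref{eq:formulacion_variacional_u}--\eqref{eq:formulacion_variacional_w} hold with $\bv=\bpsi^{\bu}$ (this is legitimate since $\bpsi^{\bu}\in V$ by (H$_5$)) and $\bvarphi=\bpsi^{\bw}$. On the right-hand side the source term $(\rho\bF,\bpsi^{\bu})$ equals $f(t)\int_\Omega\rho(\nabla h-\bm)\cdot\bpsi^{\bu}\,d\bx=f(t)\gamma_1(t)$ by \eqref{eq:helmholtz_f_uno} and \eqref{eq:operador_gamma1}, and similarly $(\rho\bG,\bpsi^{\bw})=g(t)\gamma_2(t)$. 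On the left, differentiating the overdetermination identity \eqref{eq:direct_problem_ovc} in $t$ gives $\frac{d}{dt}\int_\Omega\rho\bu\cdot\bpsi^{\bu}\,d\bx=\big((\rho\bu)_t,\bpsi^{\bu}\big)=\frac{d\phi^{\bu}}{dt}(t)$ (and analogously for $\bw$); here one needs the regularity \eqref{eq:reg_of_u}--\eqref{eq:reg_of_rho} to justify moving the derivative inside. Substituting into \eqref{eq:formulacion_variacional_u}--\eqref{eq:formulacion_variacional_w} and solving for $f(t),g(t)$ yields $f(t)=\mathcal{N}_1(t)/\gamma_1(t)$ and $g(t)=\mathcal{N}_2(t)/\gamma_2(t)$, i.e. $(f,g)=\mathcal{R}(f,g)$. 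One must check $\gamma_1(0),\gamma_2(0)\neq0$: this follows from (H$_7$) together with Lemma~\ref{lema:aprioriestimatesforrho} and the time-continuity of $\gamma_1,\gamma_2$ (a consequence of the regularity assumptions), possibly after restricting to a smaller time interval. Finally $(f,g)\in[H^1(0,T)]^2$ holds by Definition~\ref{def:strong_solutions_ip}(i), so $(f,g)\in D$ for $\upeta$ chosen large enough.

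For part (ii), suppose $(f,g)$ solves \eqref{eq:operador_ecuation}. By Theorem~\ref{teo:direct_problem_solution} the direct problem has a unique solution $\{\bu,\bw,\rho,p,h\}$ for this data, so conditions (i) and (ii) of Definition~\ref{def:strong_solutions_ip} hold; it remains to verify the overdetermination condition (iii), i.e. \eqref{eq:direct_problem_ovc}. Define $\eta^{\bu}(t):=\int_\Omega\rho\bu\cdot\bpsi^{\bu}\,d\bx-\phi^{\bu}(t)$ and $\eta^{\bw}(t):=\int_\Omega\rho\bw\cdot\bpsi^{\bw}\,d\bx-\phi^{\bw}(t)$. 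The compatibility conditions \eqref{eq:compat_cond} give $\eta^{\bu}(0)=\eta^{\bw}(0)=0$. Testing \eqref{eq:formulacion_variacional_u}--\eqref{eq:formulacion_variacional_w} with $\bpsi^{\bu},\bpsi^{\bw}$ and using the fixed-point identity $f\gamma_1=\mathcal{N}_1$, $g\gamma_2=\mathcal{N}_2$ together with the definitions \eqref{eq:operador_N1}--\eqref{eq:operador_N2}, the terms cancel and one obtains $\frac{d}{dt}\eta^{\bu}(t)=0$ and $\frac{d}{dt}\eta^{\bw}(t)=0$ on the interval of existence. Hence $\eta^{\bu}\equiv\eta^{\bw}\equiv0$, which is exactly \eqref{eq:direct_problem_ovc}, so $\{\bu,\bw,\rho,p,h,f,g\}$ solves the inverse problem.

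The main obstacle I anticipate is the bookkeeping at $t=0$: one must ensure $\gamma_1,\gamma_2$ stay bounded away from zero so that the division in the definition of $\mathcal{R}$ makes sense, and so that $\mathcal{R}$ maps into $[H^1(0,T)]^2$ rather than merely $[L^2(0,T)]^2$. This forces passing to a possibly shorter time $T_\star$ (as in the statement of Theorem~\ref{teo:wellposeIP}) on which, by continuity and (H$_7$), $|\gamma_1(t)|\ge h^\epsilon$ and $|\gamma_2(t)|\ge r^\epsilon$; one then checks that $\mathcal{N}_1/\gamma_1$ inherits $H^1$ regularity from the a priori bounds of Theorem~\ref{teo:global_estimates} (in particular \eqref{teo:global_estimates_kapa5_8} controlling $\nabla h_t$, hence $\gamma_1'$). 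The rest is essentially an exercise in matching terms between the weak formulation and the definition of the operator $\mathcal{R}$.
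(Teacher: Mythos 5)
Your proposal is correct and follows essentially the same route as the paper: test the variational identities \eqref{eq:formulacion_variacional_u}--\eqref{eq:formulacion_variacional_w} with $\bpsi^{\bu},\bpsi^{\bw}$, identify $(\rho\bF,\bpsi^{\bu})=f\gamma_1$ and $(\rho\bG,\bpsi^{\bw})=g\gamma_2$, and in part (ii) observe that the difference of the tested identity and the operator equation forces $\tfrac{d}{dt}\big(\int_\Omega\rho\bu\cdot\bpsi^{\bu}\,d\bx-\phi^{\bu}\big)=0$, which integrates to \eqref{eq:direct_problem_ovc} via the compatibility conditions. Your additional remarks on the nonvanishing of $\gamma_1,\gamma_2$ and the $H^1$ regularity are handled by the paper in Lemma~\ref{lem:Rmapitself} rather than in this proof, but they do not change the argument.
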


\begin{proof}
{\it (i)} Let us consider $\{\bu,\bw,\rho,p,h,f,g\}$ a solution of the inverse problem
\eqref{eq:momento_lineal}-\eqref{eq:helmholtz_f_cuatro}. By the
overdetermination condition \eqref{eq:direct_problem_ovc} and the definitions
of $\gamma_1$ and $\gamma_2$, given on \eqref{eq:operador_gamma1}
and \eqref{eq:operador_gamma2}, we deduce
the following identities 
\begin{eqnarray*}
&&\Big((\rho \bu)_t,\psi^\bv\Big)=\frac{d\phi^{\bu}}{dt}(t),
\quad 
\Big((\rho \bw)_t,\bpsi^{\bw}\Big)=\frac{d\phi^{\bw}}{dt}(t),
\\&&
\Big(\rho \bF,\psi^\bv\Big)=f(t)\gamma_1(t)
\quad
\mbox{and}
\quad
\Big(\rho \bG,\bpsi^{\bw}\Big)=g(t)\gamma_2(t).
\end{eqnarray*}
Hence, the selection $\bv=\bpsi^{\bu}$ and $\bvarphi=\bpsi^{\bw}$ on
the integral
identities \eqref{eq:formulacion_variacional_u}-\eqref{eq:formulacion_variacional_w}
leads to
\begin{eqnarray}
&&\frac{d\phi^{\bu}}{dt}(t)-\int_{\Omega}\rho \bu\otimes \bu:\nabla \bpsi^{\bu} d\bx
	+(\mu +\mu_r)\Big(A \bu,\bpsi^{\bu} \Big)
	-2\mu_r \Big(\curl \bw,\bpsi^{\bu}\Big)=f(t)\gamma_1(t),
\label{eq:operator_eq_i}
\\
&&\frac{d\phi^{\bw}}{dt}(t)-\int_{\Omega}\rho \bw\otimes \bw:\nabla \bpsi^{\bw} d\bx
	+\Big(L \bw,\bpsi^{\bw} \Big)
	-2\mu_r \Big(\curl \bu,\bpsi^{\bw}\Big)=g(t)\gamma_2(t).
\label{eq:operator_eq_ii}
\end{eqnarray}
Clearly, in view of \eqref{eq:operator_eq_i}-\eqref{eq:operator_eq_ii}, we
conclude that $(f,g)$ solves the operator equation~\eqref{eq:operador_ecuation}.

\noindent
{\it (ii)}. Let us consider 
$(f,g)\in [H^1(0,T)]^2$ a solution of 
the operator equation~\eqref{eq:operador_ecuation}.
Then the relations \eqref{eq:operator_eq_i} and \eqref{eq:operator_eq_ii}
are satisfied and there exists 
$\{\bu,\bw,\rho,p,h\}$ a 
solution of the direct problem \eqref{eq:momento_lineal}-\eqref{eq:direct_problem_ic}
and \eqref{eq:helmholtz_f_uno}-\eqref{eq:helmholtz_f_cuatro}.
By selecting $\bv=\bpsi^{\bu}$ and $\bvarphi=\bpsi^{\bw}$ on the integral
identities \eqref{eq:formulacion_variacional_u}-\eqref{eq:formulacion_variacional_w},
we arrive at
\begin{eqnarray}
&&\Big((\rho \bu)_t,\bpsi^{\bu}\Big)
	-\int_{\Omega}\rho \bu\otimes \bu:\nabla \bpsi^{\bu} d\bx
	+(\mu +\mu_r)\Big(A \bu,\bpsi^{\bu} \Big)
-2\mu_r \Big(\curl \bw,\bpsi^{\bu}\Big)=
	f(t)\gamma_1(t),\qquad\quad
\label{eq:operator_eq_iii}
\\&&
\Big((\rho \bw)_t,\bpsi^{\bw}\Big)
	-\int_{\Omega}\rho \bw\otimes \bw:\nabla \bpsi^{\bw} dx
	+\Big(L \bw,\bpsi^{\bw} \Big)
-2\mu_r \Big(\curl \bu,\bpsi^{\bw}\Big)
=g(t)\gamma_2(t).
\label{eq:operator_eq_iv}
  \end{eqnarray}
Subtracting, \eqref{eq:operator_eq_iii} from \eqref{eq:operator_eq_i}
and \eqref{eq:operator_eq_iv} from \eqref{eq:operator_eq_ii} we deduce
that
\begin{eqnarray}
\Big((\rho \bu)_t,\bpsi^{\bu}\Big)-\frac{d\phi^{\bu}}{dt}(t)=0
	\quad\mbox{and}\quad
\Big((\rho \bw)_t,\bpsi^{\bw}\Big)-\frac{d\phi^{\bw}}{dt}(t)=0.
\label{eq:operator_eq_v}
  \end{eqnarray} 
Integrating the equations \eqref{eq:operator_eq_v} and using the 
compatibility conditions given on \eqref{eq:compat_cond}, we deduce that the
overdetermination conditions \eqref{eq:direct_problem_ovc}
are satisfied. Thus, we have that  $\{\bu,\bw,\rho,p,h,f,g\}$ is a
solution of  the inverse problem 
\eqref{eq:momento_lineal}-\eqref{eq:helmholtz_f_cuatro}.  
\end{proof}

\subsection{Properties of the operator $\mathcal{R}$ 
defined on \eqref{eq:operador}-\eqref{eq:operador_gamma2}}

\begin{lemma}
\label{lem:Rmapitself}
Assume that (H$_1$)-(H$_7$) are satisfied. Then there exists 
a small enough time $T_{\star}$ and there exists $\upeta\in\R^+$
such that $\mathcal{R}$ maps $D$ into itself.
\end{lemma}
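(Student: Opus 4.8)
The plan is to fix $\upeta>0$ first --- any finite value works --- and then to exhibit a sufficiently small final time $T_\star$ for which the bound holds, taking care throughout that every constant depends only on $\upeta$ and the data in (H$_1$)--(H$_7$), never on the particular pair $(f,g)\in D$. For such a pair, Theorem~\ref{teo:direct_problem_solution} yields a unique solution $\{\bu,\bw,\rho,p,h\}$ of the direct problem, and Theorem~\ref{teo:global_estimates} supplies the a priori estimates \eqref{teo:global_estimates_kapa1}--\eqref{teo:global_estimates_kapa14} with constants $\upkappa_j$ and times $T_1,T_2$ that are independent of $(f,g)$. Setting $\bar T:=\min\{T_1,T_2\}$, it then suffices to bound $\|\mathcal{R}(f,g)\|_{[H^1(0,T_\star)]^2}$ on a subinterval $[0,T_\star]\subseteq[0,\bar T]$ and to choose $T_\star$ so small that this quantity does not exceed $\upeta$; as in the rest of the paper, $T_\star$ now plays the role of the terminal time, i.e. $D$ is read with $T$ replaced by $T_\star$.

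First I would bound the denominators $\gamma_1,\gamma_2$ away from zero. Using \eqref{eq:operador_gamma1}--\eqref{eq:operador_gamma2}, Lemma~\ref{lema:aprioriestimatesforrho}, hypothesis (H$_5$), the assumed regularity of $\bm,\bq$, and the bounds \eqref{teo:global_estimates_kapa2}, \eqref{teo:global_estimates_kapa3_4}, \eqref{teo:global_estimates_kapa5_8}, \eqref{teo:global_estimates_kapa11_13} on $\rho,\rho_t,h,\nabla h_t$, one checks that $\gamma_1,\gamma_2\in H^1(0,\bar T)$ with $\|\gamma_1\|_{H^1(0,\bar T)}+\|\gamma_2\|_{H^1(0,\bar T)}\le C_\gamma(\upeta)$. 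In particular $\gamma_1,\gamma_2$ are continuous on $[0,\bar T]$ with modulus of continuity controlled by $C_\gamma(\upeta)$; since $|\gamma_1(0)|\ge 2h^\epsilon$ and $|\gamma_2(0)|\ge 2r^\epsilon$ by (H$_7$), there is $T^{(0)}_\star\in(0,\bar T]$, depending only on $\upeta$ and the data, with
\begin{eqnarray*}
|\gamma_1(t)|\ge h^\epsilon\quad\text{and}\quad|\gamma_2(t)|\ge r^\epsilon\qquad\text{for all }t\in[0,T^{(0)}_\star].
\end{eqnarray*}

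Next I would bound the numerators and assemble the estimate. From \eqref{eq:operador_N1}--\eqref{eq:operador_N2}, the identity \eqref{eq:ident_stokes} and the analogous $L$-energy pairing (turning the Stokes/elliptic terms into $(\nabla\bu,\nabla\bpsi^{\bu})$ and an $L$-bilinear form), Lemma~\ref{lema:aprioriestimatesforrho}, hypotheses (H$_5$)--(H$_6$), the continuous embedding $H^2(\Omega)\hookrightarrow L^\infty(\Omega)$ (constant $C^{2,\infty}_{iny}$) for the convective terms, and the bounds \eqref{teo:global_estimates_kapa1}, \eqref{teo:global_estimates_kapa2}, \eqref{teo:global_estimates_kapa9_10}, \eqref{teo:global_estimates_kapa11_13} on $\bu,\bw,\bu_t,\bw_t$, one obtains $\mathcal{N}_1,\mathcal{N}_2\in H^1(0,\bar T)$ with $\|\mathcal{N}_i\|_{H^1(0,\bar T)}\le C_{\mathcal N}(\upeta)$ (the data terms $d\phi^{\bu}/dt$, $d\phi^{\bw}/dt$ lying in $H^1(0,T)$ by (H$_6$)). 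Combined with the denominator estimate this gives $\mathcal{R}_i=\mathcal{N}_i/\gamma_i\in H^1(0,T^{(0)}_\star)$ with $\|\mathcal{R}_i\|_{L^\infty(0,T^{(0)}_\star)}+\|\mathcal{R}_i'\|_{L^2(0,T^{(0)}_\star)}\le M(\upeta)$ for $i=1,2$. Hence, for any $T_\star\le T^{(0)}_\star$,
\begin{eqnarray*}
\|\mathcal{R}(f,g)\|^2_{[H^1(0,T_\star)]^2}=\sum_{i=1}^2\int_0^{T_\star}\bigl(|\mathcal{R}_i(t)|^2+|\mathcal{R}_i'(t)|^2\bigr)\,dt\le 4\,M(\upeta)^2\,T_\star,
\end{eqnarray*}
and choosing $T_\star:=\min\bigl\{T^{(0)}_\star,\ \upeta^2/(4M(\upeta)^2)\bigr\}$ forces $\|\mathcal{R}(f,g)\|_{[H^1(0,T_\star)]^2}\le\upeta$, i.e. $\mathcal{R}(f,g)\in D$. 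Since $T_\star$ and all intermediate constants were produced independently of the particular $(f,g)\in D$, this yields $\mathcal{R}(D)\subseteq D$.

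The hard part will be the denominator estimate: one must be sure that the length $T^{(0)}_\star$ of the interval on which $\gamma_1,\gamma_2$ stay away from zero can be chosen \emph{uniformly} over $(f,g)\in D$, which is precisely where the $(f,g)$-independence of the constants and times in Theorem~\ref{teo:global_estimates} and the nondegeneracy hypothesis (H$_7$) are both used. More generally, the whole scheme rests on careful bookkeeping ensuring that none of $C_\gamma,C_{\mathcal N},M$ secretly depends on $(f,g)$ rather than only on the a priori bound $\upeta$; once that is granted, the second-kind structure $\mathcal{R}_i=\mathcal{N}_i/\gamma_i$, with bounded numerator and denominator bounded below, makes $\|\mathcal{R}(f,g)\|_{[H^1(0,T_\star)]^2}$ of order $\sqrt{T_\star}$, which is exactly what allows the ball $D$ to be preserved by shrinking $T_\star$.
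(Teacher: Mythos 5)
Your overall architecture is the same as the paper's: invoke Theorem~\ref{teo:global_estimates} for $(f,g)$-independent a priori bounds, use (H$_7$) plus continuity of $\gamma_1,\gamma_2$ to keep the denominators above $h^\epsilon,r^\epsilon$ on a uniform small interval, bound the numerators $\mathcal{N}_i$ and the derivatives $d\mathcal{N}_i/dt$, $d\gamma_i/dt$, and close the estimate. Your treatment of the denominators (modulus of continuity via $\|\gamma_i'\|_{L^2}\le C_\gamma(\upeta)$, so $|\gamma_i(t)-\gamma_i(0)|\le C_\gamma\sqrt{t}$) is in fact more explicit than the paper's, which simply asserts the existence of $T_{\bowtie}$ in \eqref{eq:remark_h7}.

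The gap is in the final quantitative step. You claim $\|\mathcal{R}(f,g)\|^2_{[H^1(0,T_\star)]^2}\le 4M(\upeta)^2 T_\star$, which presupposes that \emph{both} $\mathcal{R}_i$ and $\mathcal{R}_i'$ are bounded in $L^\infty(0,T_\star)$ uniformly over $D$. That is true for $\mathcal{R}_i$ but not for $\mathcal{R}_i'$: the derivative $d\mathcal{N}_i/dt$ contains $d^2\phi/dt^2$ (only $L^2$ in time under (H$_6$)) and the pairings with $\nabla\bu_t,\nabla\bw_t$, which by \eqref{teo:global_estimates_kapa2} are controlled only in $L^2([0,T_2];\bH^1(\Omega))$, not in $L^\infty$ in time. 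Hence $\int_0^{T_\star}|\mathcal{R}_i'|^2\,dt\le M(\upeta)^2$ with \emph{no} factor of $T_\star$, and since these terms depend on $(f,g)$, you cannot appeal to absolute continuity of the integral to get smallness as $T_\star\to0$ uniformly over the ball $D$ (a bounded family in $L^2$ need not be uniformly integrable). So "any finite $\upeta$ works, then shrink $T_\star$" is not established. The way the paper closes the argument — and the reason the lemma only asserts the \emph{existence} of some $\upeta$ — is to accept a bound of the form $\|\mathcal{R}(f,g)\|^2_{[H^1]^2}\le\Theta_1+(\Theta_2+\Theta_3\upeta)^2$, where only the $\upeta$-dependent part carries the small factor $T^{1/2}$, and then choose $\upeta$ (together with $T_\star$) so that $\Theta_1+(\Theta_2+\Theta_3\upeta)^2\le\upeta^2$; the non-vanishing derivative contributions are absorbed into $\Theta_1$ and dominated by taking $\upeta$ large rather than $T_\star$ small. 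Your proof needs this modification (or an upgrade of \eqref{teo:global_estimates_kapa2} to an $L^\infty$-in-time bound on $\nabla\bu_t,\nabla\bw_t$, which the paper does not provide).
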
 

\begin{proof}
From (H$_7$) and Thoerem~\ref{teo:direct_problem_solution} we deduce
that there exists $T_{\bowtie}\in [0,T_*]$ such that
\begin{eqnarray}
\gamma_1,\gamma_2\in C[0,T_{\bowtie}], 
\quad
|\gamma_1(t)|\ge h^\epsilon>0
\quad
\mbox{and}
\quad
|\gamma_2(t)|\ge r^\epsilon>0
\quad
\mbox{on}
\quad
[0,T_{\bowtie}].
\label{eq:remark_h7}
\end{eqnarray}
Then, by the definition of $\mathcal{R}$ given on \eqref{eq:operador}, we have that
\begin{eqnarray}&&
 \|\mathcal{R}(f,g)\|^2_{[H^1([0,T])]^2}
=
\sum_{i=1}^2\|\mathcal{R}_i(f,g)\|^2_{H^1([0,T])}
\nonumber\\&&
\qquad
\le\frac{1}{(h^\epsilon)^2}\|\mathcal{N}_1\|^2_{L^2([0,T])}
+\left(
\frac{1}{(h^\epsilon)^2}\left\|\frac{d\mathcal{N}_1}{dt}\right\|^2_{L^2([0,T])}
+\frac{1}{(h^\epsilon)^4}\|\mathcal{N}_1\|_{L^2([0,T])}
\left\|\frac{d\gamma_1}{dt}\right\|_{L^2([0,T])}
\right)^2
\nonumber\\&&
\qquad
+\frac{1}{(r^\epsilon)^2}\|\mathcal{N}_2\|^2_{L^2([0,T])}
+\left(
\frac{1}{(r^\epsilon)^2}\left\|\frac{d\mathcal{N}_2}{dt}\right\|^2_{L^2([0,T])}
+\frac{1}{(r^\epsilon)^4}\|\mathcal{N}_2\|_{L^2([0,T])}
\left\|\frac{d\gamma_2}{dt}\right\|_{L^2([0,T])}
\right)^2.\qquad
\label{eq:remark_h7:uno}
\end{eqnarray}
Now, denoting by $\|\cdot\|_{p,q}$ 
the norm $\|\cdot\|_{L^p([0,T];L^q(\Omega))}$ 
and by $\|\cdot\|_{\bL^p}$ 
the norm $\|\cdot\|_{\bL^p((\Omega)}$, making use of
the relation \eqref{eq:ident_stokes} for the operator $A$ 
and applying the Minkowski and
H\"older inequalities, we have that
\begin{eqnarray*}
&&\|\mathcal{N}_1\|_{L^2([0,T])}
\le\left\|\frac{d\phi^{\bu}}{dt}\right\|_{L^2([0,T])}+
	\|\rho\|_{\infty,\infty}\|\bu\|^2_{\infty,2}
	\|\bpsi^{\bu}\|_{\bL^\infty}T^{1/2}
	+(\mu +\mu_r)\|\nabla \bu\|_{\infty,2}
	\times
\\&&
\hspace{1.5cm}
	\|\nabla \bpsi^{\bu}\|_{\bL^2}T^{1/2}
	+2\mu_r \|\nabla \bw\|_{\infty,2}
	\|\bpsi^{\bu}\|_{\bL^2}T^{1/2},
\\&&
\|\mathcal{N}_2\|_{L^2([0,T])}
\le\left\|\frac{d\phi^{\bw}}{dt}\right\|_{L^2([0,T])}+
	\|\rho\|_{\infty,\infty}\|\bw\|^2_{\infty,2}
	\|\bpsi^{\bw}\|_{\bL^\infty}T^{1/2}
	+ \|\bw\|_{\infty,2}
	\|L\bpsi^{\bw}\|_{\infty,2} 
\\&&
\hspace{1.5cm}
	+2\mu_r \|\nabla \bw\|_{\infty,2}
	\|\bpsi^{\bw}\|_{\bL^2}T^{1/2},
\\&&
\left\|\frac{d\mathcal{N}_1}{dt}\right\|_{L^2([0,T])}
\le\left\|\frac{d^2\phi^{\bu}}{dt^2}\right\|_{L^2([0,T])}+
	\|\rho_t\|_{\infty,2}\|\bu\|^2_{\infty,6}
	\|\bpsi^{\bu}\|_{\bL^6}T^{1/2}
	+
	2\|\rho\|_{\infty,\infty}
	\|\bu_t\|_{\infty,2}\|\bu\|_{\infty,6}
	\times
\\&&
\hspace{1.5cm}
	\|\bpsi^{\bu}\|_{\bL^3}T^{1/2}
	  +(\mu +\mu_r)\|\nabla \bu_t\|_{\infty,2}
	\|\nabla\bpsi^{\bu}\|_{\bL^2}T^{1/2}
	+2\mu_r \|\nabla \bw_t\|_{\infty,2}
	\|\bpsi^{\bu}\|_{\bL^2}T^{1/2},
\\&&
\left\|\frac{d\mathcal{N}_2}{dt}\right\|_{L^2([0,T])}
\le\left\|\frac{d^2\phi^{\bw}}{dt^2}\right\|_{L^2([0,T])}+
	\|\rho_t\|_{\infty,2}\|w\|^2_{\infty,6}
	\|\bpsi^{\bw}\|_{\bL^6}T^{1/2}
	+2\|\rho\|_{\infty,\infty}
	\|w_t\|_{\infty,2}\|w\|_{\infty,6}
	\times
\\&&
\hspace{1.5cm}
	\|\bpsi^{\bw}\|_{\bL^3}T^{1/2}
	+\|\bw_t\|_{\infty,2}
	\|L\bpsi^{\bw}\|_{\bL^2} 
	+2\mu_r \|\nabla \bw_t\|_{\infty,2}
	\|\bpsi^{\bu}\|_{\bL^2}T^{1/2},
\\&&
\left\|\frac{d\gamma_1}{dt}\right\|_{L^2([0,T])}\le
	\|\rho_t\|_{\infty,2}
	\|\nabla h-\bm\|_{\infty,2}
	\|\bpsi^{\bu}\|_{\bL^\infty}T^{1/2}
	+\|\rho\|_{\infty,\infty}
	\|\nabla h_t-\bm_t\|_{\infty,2}
	\|\bpsi^{\bu}\|_{\bL^2}T^{1/2}\;\mbox{and}
\\&&
\left\|\frac{d\gamma_2}{dt}\right\|_{L^2([0,T])}\le
	\|\rho_t\|_{\infty,2}
	\|\bq\|_{\infty,2}
	\|\bpsi^{\bw}\|_{\bL^\infty}T^{1/2}
	+\|\rho\|_{\infty,\infty}
	\|\nabla \bq_t\|_{\infty,2}
	\|\bpsi^{\bw}\|_{\bL^2}T^{1/2}.
\end{eqnarray*}
Hence, by Theorem~\ref{teo:global_estimates} and 
Lemmas~\ref{lema:aprioriestimatesforrho}-\ref{lema:aprioriestimatesforhandr}, we have that
\begin{eqnarray*}
\|\mathcal{N}_i\|_{L^2([0,T])}\le \eta_i,\quad
\|d\mathcal{N}_i/dt\|_{L^2([0,T])}\le \overline{\eta}_i
\quad\mbox{and}\quad
\|d\gamma_i/dt\|_{L^2([0,T])}\le \sigma_i,
\quad i=1,2,
\end{eqnarray*}
where
\begin{eqnarray*}
\eta_1&=&
	\left\|\frac{d\phi^{\bu}}{dt}\right\|_{L^2([0,T])}
	+\upkappa_1T^{1/2}
	\Big(\beta \upkappa_1\|\bpsi^{\bu}\|_{\bL^\infty}
	+(\mu+\mu_r)\|\nabla\bpsi^{\bu}\|_{\bL^2}
	+2\mu_r\|\bpsi^{\bu}\|_{\bL^2}\Big),
 \\
\eta_2&=&\left\|\frac{d\phi^{\bw}}{dt}\right\|_{L^2([0,T])}
	+\upkappa_1 T^{1/2}
	\Big(\beta \upkappa_1\|\bpsi^{\bw}\|_{\bL^\infty}
	+ \|L\bpsi^{\bw}\|_{\bL^2}
	+2\mu_r\|\bpsi^{\bw}\|_{\bL^2}\Big),
\\
\overline{\eta}_1&=&
	\left\|\frac{d^2\phi^{\bu}}{dt^2}\right\|_{L^2([0,T])}
	+T^{1/2}
	\Big(\upkappa_{8} (C_{poi}\upkappa_1)^2\|\bpsi^{\bu}\|_{\bL^6}
	+2\beta \upkappa_1\upkappa_2C_{poi}\|\bpsi^{\bu}\|_{\bL^3}
\\
&&
\hspace{2.4cm}
	+(\mu+\mu_r)\upkappa_2\|\nabla\bpsi^{\bu}\|_{\bL^2}
	+2\mu_r\upkappa_2\|\bpsi^{\bu}\|_{L^2}\Big),
 \\
\overline{\eta}_2&=&
	\left\|\frac{d^2\phi^{\bw}}{dt^2}\right\|_{L^2([0,T])}
	+T^{1/2}
	\Big(\upkappa_{8} (C_{poi}\upkappa_1)^2\|\bpsi^{\bw}\|_{\bL^6}
	+2\beta \upkappa_1\upkappa_2C_{poi}\|\bpsi^{\bw}\|_{\bL^3}
\\&&
\hspace{2.4cm}
	+\upkappa_1\|L\bpsi^{\bw}\|_{\bL^2}
	+2\mu_r\upkappa_2\|\bpsi^{\bw}\|_{\bL^2}\Big),
\\
\sigma_1&=&\Big[\upkappa_{8}\Big(\Uppi_1+1\Big)\|\bm\|_{\infty,2}
	\|\bpsi^{\bu}\|_{L^\infty}+\beta\Big(
	\upkappa_{4}+\upkappa_{5}\upeta+\|\bm_t\|_{\infty,2}\Big)
	\|\bpsi^{\bu}\|_{\bL^2}\Big]\quad\mbox{and}
\\
\sigma_2&=&\Big[\upkappa_{8}\|\bq\|_{\infty,2}
	\|\bpsi^{\bw}\|_{\bL^\infty}+\beta\|\bq_t\|_{\infty,2}
	\|\bpsi^{\bw}\|_{\bL^2}\Big].
\end{eqnarray*}
Thus, by \eqref{eq:remark_h7:uno}, we follow that there exists 
$\Theta_i,\;i=1,2,3$ independent of $\upeta$ such that
$\|\mathcal{R}(f,g)\|^2_{[H^1([0,T])]^2}\le \Theta_1+\Big(\Theta_2+\Theta_3\upeta\Big)^2$,
which implies the proof of the Lemma.
\end{proof}

\begin{lemma}
\label{lem:weaklycontinous}
 $\mathcal{R}$ is weakly continuous from $D$ into $D$.
\end{lemma}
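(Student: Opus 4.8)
The plan is to establish weak sequential continuity of $\mathcal{R}$ on $D$ by combining the uniform a priori bounds of Theorem~\ref{teo:global_estimates} (which hold for every $(f,g)\in D$) with the continuous-dependence estimate of Lemma~\ref{lema:direct_problem_solution}. First I would take an arbitrary sequence $(f_n,g_n)\in D$ with $(f_n,g_n)\rightharpoonup (f,g)$ weakly in $[H^1(0,T)]^2$; by Lemma~\ref{lem:Rmapitself} we already know $(f,g)\in D$ and that $\{\mathcal{R}(f_n,g_n)\}$ is bounded in $[H^1(0,T)]^2$, so it suffices to identify the weak limit of any subsequence and show it equals $\mathcal{R}(f,g)$. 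Since a bounded sequence in a Hilbert space has a weakly convergent subsubsequence and the limit, once shown to be unique, forces the whole sequence to converge, the real content is the identification of the limit.

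The key step is to pass to the limit inside the formulas \eqref{eq:operador_N1}--\eqref{eq:operador_gamma2} defining $\mathcal{N}_i$ and $\gamma_i$. Here I would use that weak $H^1(0,T)$ convergence implies strong $L^2(0,T)$ convergence (and uniform convergence, since $H^1(0,T)\hookrightarrow C[0,T]$ compactly in one dimension), together with the strong convergence of the state variables: applying Lemma~\ref{lema:direct_problem_solution} to the pairs $(f_n,g_n)$ and $(f,g)$ gives
\begin{eqnarray*}
&&\|\bu_n-\bu\|_{L^\infty([0,T_\star];\bL^2)}
+\|\bw_n-\bw\|_{L^\infty([0,T_\star];\bL^2)}
+\|\rho_n-\rho\|_{L^\infty([0,T_\star];L^2)}
\\&&
\hspace{3cm}
+\|\nabla(h_n-h)\|_{L^\infty([0,T_\star];\bL^2)}
\le C\Big(\|f_n-f\|_{L^2}+\|g_n-g\|_{L^2}\Big)\to 0.
\end{eqnarray*}
Combined with the uniform higher-order bounds from Theorem~\ref{teo:global_estimates} (which give weak-$*$ compactness of $\bu_n$ in $L^\infty([0,T_\star];\bH^2)$, of $\bw_n$ in $L^\infty([0,T_\star];\bH^2)$, of $\rho_n$ in the relevant spaces, etc.), standard interpolation upgrades the convergence enough to pass to the limit in each bilinear and trilinear term: the products $\rho_n\bu_n\otimes\bu_n$, $(A\bu_n,\bpsi^{\bu})$, $(\curl\bw_n,\bpsi^{\bu})$ converge (in $L^2(0,T)$, say) to their counterparts with $(f,g)$, so $\mathcal{N}_i(f_n,g_n)\to \mathcal{N}_i(f,g)$ and $\gamma_i(f_n,g_n)\to\gamma_i(f,g)$ in $L^2(0,T)$; similarly one treats the time-derivative expressions $d\mathcal{N}_i/dt$ using the bounds on $\bu_t,\bw_t,\rho_t$. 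Since \eqref{eq:remark_h7} guarantees $|\gamma_i|\ge \min\{h^\epsilon,r^\epsilon\}>0$ uniformly, the quotients $\mathcal{N}_i/\gamma_i$ converge as well, whence $\mathcal{R}(f_n,g_n)\to \mathcal{R}(f,g)$ strongly in $[L^2(0,T)]^2$. A bounded sequence in $[H^1(0,T)]^2$ converging strongly in $[L^2(0,T)]^2$ converges weakly in $[H^1(0,T)]^2$ to the same limit, which is exactly $\mathcal{R}(f,g)$; this identifies every subsequential weak limit, so the full sequence $\mathcal{R}(f_n,g_n)\rightharpoonup \mathcal{R}(f,g)$ in $[H^1(0,T)]^2$.

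The main obstacle I anticipate is making the passage to the limit rigorous in the two nonlinear convective terms $\int_\Omega \rho_n\bu_n\otimes\bu_n:\nabla\bpsi^{\bu}$ and $\int_\Omega \rho_n\bw_n\otimes\bw_n:\nabla\bpsi^{\bw}$, and in the corresponding terms of $d\mathcal{N}_i/dt$, because there one multiplies three factors each of which is only weakly or weak-$*$ convergent a priori; the resolution is precisely the strong $L^\infty([0,T_\star];\bL^2)$ convergence from Lemma~\ref{lema:direct_problem_solution} together with the uniform $\bH^2$-bound from Theorem~\ref{teo:global_estimates}, which by the interpolation $\bH^1 = [\bL^2,\bH^2]_{1/2}$ yields strong convergence in $L^\infty([0,T_\star];\bH^1)\hookrightarrow L^\infty([0,T_\star];\bL^6)$, enough to control $\|\bu_n\otimes\bu_n - \bu\otimes\bu\|$ against the test functions (which lie in $\bW^{1,\infty}$ by (H$_5$)). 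A secondary, purely bookkeeping, point is that Lemma~\ref{lema:direct_problem_solution} as stated bounds only the lowest-order differences, so for the $d\mathcal{N}_i/dt$ terms one must either note these depend only on quantities already controlled in $L^2(0,T)$-norm after taking a subsequence, or invoke the weak-$*$ limits of $\bu_t,\bw_t,\rho_t$ furnished by Theorem~\ref{teo:global_estimates} and match them with the limit system; since the limit state $\{\bu,\bw,\rho,p,h\}$ is the unique solution associated with $(f,g)$ (Theorem~\ref{teo:direct_problem_solution}), these weak limits are uniquely determined and the argument closes.
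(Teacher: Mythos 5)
Your argument is correct, but it is worth knowing that the paper's own ``proof'' of this lemma is a single sentence asserting that the claim is clear; your write-up supplies exactly the details that the authors omit. The structure you use is the natural one: weak closedness of the bounded closed convex set $D$ gives $(f,g)\in D$; the compact embedding $H^1(0,T)\hookrightarrow C[0,T]$ turns weak $H^1$ convergence of $(f_n,g_n)$ into strong (uniform) convergence, which feeds into the continuous-dependence estimate of Lemma~\ref{lema:direct_problem_solution} to produce strong $L^\infty L^2$ convergence of the states; combined with the uniform higher-order bounds of Theorem~\ref{teo:global_estimates} and interpolation this lets you pass to the limit in every term of $\mathcal{N}_i$ and $\gamma_i$, and the uniform lower bound \eqref{eq:remark_h7} on $|\gamma_i|$ handles the quotient. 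Your closing observation is the key simplification and deserves emphasis: since Lemma~\ref{lem:Rmapitself} already bounds $\mathcal{R}(f_n,g_n)$ in $[H^1(0,T)]^2$, strong $[L^2(0,T)]^2$ convergence of $\mathcal{R}(f_n,g_n)$ to $\mathcal{R}(f,g)$ suffices to identify every weak subsequential limit, so you never need to pass to the limit in the formulas for $d\mathcal{N}_i/dt$ or $d\gamma_i/dt$ at all --- which disposes of the ``secondary bookkeeping point'' you raise more cleanly than invoking weak-$*$ limits of $\bu_t,\bw_t,\rho_t$. One small point to make explicit if you write this out in full: the constant $C$ in Lemma~\ref{lema:direct_problem_solution} depends on norms of the two solutions being compared, so you should note that it can be taken uniform over $D$ precisely because of the a priori estimates of Theorem~\ref{teo:global_estimates}.
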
   
 \begin{proof}
It is clear that if $(f_n,g_n)\rightharpoonup (f,g)$ weakly in $[H^1(0,T)]^2$,
then $\mathcal{R}(f_n,g_n)\rightharpoonup \mathcal{R}(f,g)$ weakly in $[H^1(0,T)]^2$
as well.
\end{proof}

 \begin{lemma}
\label{lem:contraction}
Assume that (H$_1$)-(H$_7$) are satisfied. Then there exists 
$k_o$ 
such that $\mathcal{R}^{k_o}$ is a contraction map in $L^2(0,T)$.
\end{lemma}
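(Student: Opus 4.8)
The plan is to turn the continuous--dependence estimate of Lemma~\ref{lema:direct_problem_solution} into a Volterra--type Lipschitz bound for $\mathcal{R}$ and then apply the classical ``$k$-th power'' device: once an inequality of the form $|\mathcal{R}(f_1,g_1)(t)-\mathcal{R}(f_2,g_2)(t)|^2\le C_0\int_0^t|(f_1,g_1)-(f_2,g_2)|^2\,ds$ is available, iterating it produces the factor $(C_0T_{\star})^{k}/k!\to 0$, and choosing $k_o$ with $(C_0T_{\star})^{k_o}/k_o!<1$ gives the contraction no matter how large the interval is.

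First I would fix $(f_1,g_1),(f_2,g_2)\in D$ with associated direct--problem solutions $\{\bu_i,\bw_i,\rho_i,p_i,h_i\}$, $i=1,2$, which by Theorems~\ref{teo:global_estimates} and~\ref{teo:direct_problem_solution} exist on a common interval $[0,T_{\star}]$ on which all the bounds of Theorem~\ref{teo:global_estimates} hold uniformly over $D$ and $|\gamma_1^{(i)}|\ge h^\epsilon$, $|\gamma_2^{(i)}|\ge r^\epsilon$ by \eqref{eq:remark_h7}; I keep the $\delta$-notation of the proof of Lemma~\ref{lema:direct_problem_solution}. Writing $\mathcal{R}_j=\mathcal{N}_j^{(i)}/\gamma_j^{(i)}$ for the two inputs and using the common--denominator identity
\[
\frac{\mathcal{N}_1^{(1)}}{\gamma_1^{(1)}}-\frac{\mathcal{N}_1^{(2)}}{\gamma_1^{(2)}}
=\frac{\mathcal{N}_1^{(1)}-\mathcal{N}_1^{(2)}}{\gamma_1^{(1)}}
+\frac{\mathcal{N}_1^{(2)}\big(\gamma_1^{(2)}-\gamma_1^{(1)}\big)}{\gamma_1^{(1)}\gamma_1^{(2)}}
\]
(and its analogue for $\mathcal{R}_2$), together with the above lower bounds and the uniform upper bounds for $\mathcal{N}_j^{(i)}$, $\gamma_j^{(i)}$ from Theorem~\ref{teo:global_estimates}, Lemma~\ref{lema:aprioriestimatesforrho} and (H$_6$), I would reduce matters to estimating $|\mathcal{N}_j^{(1)}(t)-\mathcal{N}_j^{(2)}(t)|$ and $|\gamma_j^{(1)}(t)-\gamma_j^{(2)}(t)|$. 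Here I would use that $\bpsi^{\bu},\bpsi^{\bw}\in\bH^2(\Omega)\cap\bW^{1,\infty}(\Omega)$, so that one further integration by parts gives $(A\bu,\bpsi^{\bu})=-(\bu,\Delta\bpsi^{\bu})$, $(\curl\bw,\bpsi^{\bu})=(\bw,\curl\bpsi^{\bu})$ and $(L\bw,\bpsi^{\bw})=(\bw,L\bpsi^{\bw})$; the $\phi$-terms cancel in the differences, and every remaining (convective, Stokes/elliptic, Coriolis) term of $\mathcal{N}_j^{(1)}-\mathcal{N}_j^{(2)}$ and every term of $\gamma_j^{(1)}-\gamma_j^{(2)}$ is then controlled, via the H\"older inequality, the embedding $\bH^2(\Omega)\hookrightarrow\bL^\infty(\Omega)$ and the uniform bounds on $\|\bu_i\|_{\bH^2}$, $\|\bw_i\|_{\bH^2}$, $\|\rho_i\|_{L^\infty}$, $\|\nabla h_i\|_{\bL^2}$, solely by the $\bL^2$-norms of $\delta\bu,\delta\bw,\delta\rho,\nabla\delta h$, with \emph{no} spatial derivative of $\delta\bu$ or $\delta\bw$ appearing. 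This would yield
\[
\big|\mathcal{R}(f_1,g_1)(t)-\mathcal{R}(f_2,g_2)(t)\big|
\le C\Big(\|\delta\bu(\cdot,t)\|_{\bL^2}+\|\delta\bw(\cdot,t)\|_{\bL^2}
+\|\delta\rho(\cdot,t)\|_{L^2}+\|\nabla\delta h(\cdot,t)\|_{\bL^2}\Big).
\]

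Next I would observe that the Gronwall step in the proof of Lemma~\ref{lema:direct_problem_solution} in fact delivers a \emph{pointwise--in--$t$} bound: from $\tfrac{d}{dt}E(t)\le\Gamma^\delta(t)E(t)+\Gamma^\bu_3(t)|\delta f(t)|^2+\Gamma^\bw_3(t)|\delta g(t)|^2$ with $E(t)=\int_\Omega(\rho_1|\delta\bu|^2+\rho_1|\delta\bw|^2+\delta\rho^2)(\bx,t)\,d\bx$, $E(0)=0$ and $\|\Gamma^\delta\|_{L^1(0,T_{\star})}<\infty$, one gets $E(t)\le C\int_0^t(|\delta f(s)|^2+|\delta g(s)|^2)\,ds$, and then Lemma~\ref{lema:aprioriestimatesforrho} together with \eqref{lema:direct_problem_solution:ec7} for the $\nabla\delta h$ term gives
\[
\|\delta\bu(\cdot,t)\|_{\bL^2}^2+\|\delta\bw(\cdot,t)\|_{\bL^2}^2
+\|\delta\rho(\cdot,t)\|_{L^2}^2+\|\nabla\delta h(\cdot,t)\|_{\bL^2}^2
\le C\int_0^t\big(|\delta f(s)|^2+|\delta g(s)|^2\big)\,ds .
\]
Combining the two displays I obtain, with $C_0$ independent of the two inputs,
\[
\big|\mathcal{R}(f_1,g_1)(t)-\mathcal{R}(f_2,g_2)(t)\big|^2
\le C_0\int_0^t\big(|f_1(s)-f_2(s)|^2+|g_1(s)-g_2(s)|^2\big)\,ds,
\qquad t\in[0,T_{\star}],
\]
and, since the same estimate holds for any two elements of $D$, it applies in particular to the pairs $\mathcal{R}^{k-1}(f_i,g_i)$, which lie in $D$ by Lemma~\ref{lem:Rmapitself}.

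Finally, I would iterate: induction on $k$ and Fubini's theorem give
\[
\big|\mathcal{R}^{k}(f_1,g_1)(t)-\mathcal{R}^{k}(f_2,g_2)(t)\big|^2
\le \frac{C_0^{\,k}}{(k-1)!}\int_0^t (t-s)^{k-1}
\big(|f_1(s)-f_2(s)|^2+|g_1(s)-g_2(s)|^2\big)\,ds,
\]
whence, integrating over $[0,T_{\star}]$ and applying Fubini once more,
\[
\big\|\mathcal{R}^{k}(f_1,g_1)-\mathcal{R}^{k}(f_2,g_2)\big\|^2_{[L^2(0,T_{\star})]^2}
\le \frac{(C_0 T_{\star})^{k}}{k!}\,
\big\|(f_1,g_1)-(f_2,g_2)\big\|^2_{[L^2(0,T_{\star})]^2}.
\]
Since $(C_0T_{\star})^{k}/k!\to 0$, I pick $k_o\in\mathbb{N}$ with $(C_0T_{\star})^{k_o}/k_o!<1$; then $\mathcal{R}^{k_o}$ is a contraction of $D$ for the $L^2(0,T_{\star})$ metric, which is the assertion. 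The step I expect to be the main obstacle is the reduction in the second paragraph: one must check that \emph{all} the integrations by parts occurring in $\mathcal{N}_j^{(1)}-\mathcal{N}_j^{(2)}$ and $\gamma_j^{(1)}-\gamma_j^{(2)}$ can be thrown onto the fixed smooth fields $\bpsi^{\bu},\bpsi^{\bw}$, so that only the quantities that Lemma~\ref{lema:direct_problem_solution} controls pointwise in time — the $\bL^2$-norms of $\delta\bu$, $\delta\bw$, $\delta\rho$, $\nabla\delta h$, but not their derivatives — enter the bound; it is precisely this pointwise control (rather than merely time--integrated control of the gradients) that makes the $k_o$-th power contract on the whole of $[0,T_{\star}]$ rather than only on a possibly smaller subinterval.
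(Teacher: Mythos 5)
Your proposal is correct and follows essentially the same route as the paper: derive a Volterra-type Lipschitz bound for $\mathcal{R}$ from the continuous-dependence estimate of Lemma~\ref{lema:direct_problem_solution}, iterate to obtain the factor $(C_0T)^{k}/k!$, and choose $k_o$ so that this is less than $1$. The only difference is one of detail — you justify the quotient estimate (lower bounds on $\gamma_j$, integration by parts onto $\bpsi^{\bu},\bpsi^{\bw}$, and the pointwise-in-$t$ Gronwall bound) which the paper's proof simply asserts, and you correctly note that Lemma~\ref{lem:Rmapitself} is needed to keep the iterates in $D$.
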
   
\begin{proof}
Let us consider $(f_i,g_i)\in D$ and $\{u_i,w_i,\rho_i,p_i,h_i\}$,
for $i=1,2$, the corresponding solution of the direct problem.
Then, by the definition of the operator $\mathcal{R}$ we deduce
that there exists $C>0$ independent of $\{u_i,w_i,\rho_i,p_i,h_i,f_i,g_i\}$
such that
\begin{eqnarray*}
\|\mathcal{R}(f_1,g_1)-\mathcal{R}(f_2,g_2)\|_{L^2(0,t)}
\le C\Big[
\|\rho_1-\rho_2\|_{L^2(Q_t)}
+\|u_1-u_2\|_{L^2(Q_t)}
\\
+\|w_1-w_2\|_{L^2(Q_t)}
+\|\nabla(h_1-h_2)\|_{L^2(Q_t)}
+\|\nabla(r_1-r_2)\|_{L^2(Q_t)}
\Big]
\end{eqnarray*}
with $Q_t=\Omega\times [0,t]$. 
Hence, by Lemma~\ref{lema:direct_problem_solution} we 
deduce that
\begin{eqnarray*}
\|\mathcal{R}(f_1,g_1)-\mathcal{R}(f_2,g_2)\|_{L^2(0,t)}
\le \Big(C \int_0^t \|(f_1,g_1)-(f_2,g_2)\|_{L^2(0,\tau)}d\tau\Big)^{1/2}.
\end{eqnarray*}
By induction on $n$, we deduce that
\begin{eqnarray*}
\|\mathcal{R}^n(f_1,g_1)-\mathcal{R}^n(f_2,g_2)\|_{L^2(0,T)}
\le \left(\frac{C^nT^n}{n!}\right)^{1/2}
\|(f_1,g_1)-(f_2,g_2)\|_{L^2(0,T)},
\end{eqnarray*}
which implies the conclusion of the Lemma.
\end{proof}

\subsection{Proof of Theorem~\ref{teo:wellposeIP}}
\label{subsec:proofteoIP}

Thanks to Theorem~\ref{teo:ip_vs_operator_ec} we follow that 
the cornerstone to prove Theorem~\ref{teo:wellposeIP}  
is the proof of the unique
solvability of the operator equation~\eqref{eq:operador_ecuation}.
To this end, we first note that the Lemmas \ref{lem:Rmapitself}
and \ref{lem:weaklycontinous}
guarantee that the operator $\mathcal{R}$ satisfies the 
hypothesis of the following Tikhonov fixed point theorem:

\begin{theorem}
\label{teo:tikhonov}
Let $D$ be a non-empty bounded closed convex subset of a separable reflexive
Banach space $E$ and let $\mathcal{R}:D\to D$ be a weakly continuous mapping. Then
$\mathcal{R}$ has at least one fixed point in $D$.
\end{theorem}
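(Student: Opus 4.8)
The plan is to obtain Theorem~\ref{teo:tikhonov} as a consequence of the Schauder--Tychonoff fixed point theorem applied to $D$ equipped with the \emph{weak} topology $w$ of $E$; the point is that $(D,w)$ is a non-empty, convex, compact and \emph{metrizable} subset of the locally convex space $(E,w)$ on which $\mathcal{R}$ is continuous. Non-emptiness and convexity are hypotheses. Since $E$ is separable and reflexive, $E^{**}=E$ is separable, hence $E^{*}$ is separable; picking a countable set $\{\ell_{k}\}_{k\ge1}$ dense in the unit ball of $E^{*}$, the formula
\begin{equation*}
d(x,y):=\sum_{k\ge1}2^{-k}\bigl|\langle \ell_{k},x-y\rangle\bigr|
\end{equation*}
defines a metric on the bounded set $D$ inducing exactly the relative weak topology. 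By Mazur's theorem the closed convex set $D$ is weakly closed, and by reflexivity (Kakutani) every bounded weakly closed set is weakly compact; thus $(D,d)$ is a compact metric space. Continuity of $\mathcal{R}$ for $w$ is the standing hypothesis, and on the metrizable set $D$ it amounts to sequential weak continuity.

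It then remains to prove the fixed point assertion for a continuous self-map $\mathcal{R}$ of a compact convex metrizable subset $D$ of a locally convex space, which I would do by the standard finite-dimensional approximation. For $n\in\mathbb{N}$, compactness provides $x_{1}^{n},\dots,x_{m_{n}}^{n}\in D$ with $D\subseteq\bigcup_{i}B_{d}(x_{i}^{n},1/n)$; setting $\lambda_{i}^{n}(x):=\max\{0,1/n-d(x,x_{i}^{n})\}$ and $\Lambda^{n}(x):=\sum_{i}\lambda_{i}^{n}(x)>0$ on $D$, define the Schauder projection
\begin{equation*}
P_{n}(x):=\frac{1}{\Lambda^{n}(x)}\sum_{i=1}^{m_{n}}\lambda_{i}^{n}(x)\,x_{i}^{n}\ \in\ D_{n}:=\operatorname{conv}\{x_{1}^{n},\dots,x_{m_{n}}^{n}\}\subseteq D.
\end{equation*}
Since $z\mapsto d(z,x)$ is convex (each $|\langle\ell_{k},\cdot\rangle|$ is a seminorm) and $\lambda_{i}^{n}(x)>0$ forces $d(x_{i}^{n},x)<1/n$, one gets $d(P_{n}(x),x)<1/n$ for every $x\in D$. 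The composition $P_{n}\circ\mathcal{R}\colon D_{n}\to D_{n}$ is continuous on a finite-dimensional compact convex set, so Brouwer's theorem yields $y_{n}\in D_{n}$ with $y_{n}=P_{n}(\mathcal{R}(y_{n}))$, hence $d(y_{n},\mathcal{R}(y_{n}))<1/n$.

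Finally I would pass to the limit: by compactness of $(D,d)$ a subsequence $y_{n_{j}}$ converges (weakly) to some $y\in D$, and sequential weak continuity of $\mathcal{R}$ gives $\mathcal{R}(y_{n_{j}})\to\mathcal{R}(y)$, so that $d(y,\mathcal{R}(y))=\lim_{j}d(y_{n_{j}},\mathcal{R}(y_{n_{j}}))=0$ and $\mathcal{R}(y)=y$. The step I expect to demand the most care is establishing that the weak topology on $D$ is simultaneously compact and metrizable --- this is precisely where separability together with reflexivity of $E$, and Mazur's theorem, enter --- together with the elementary but slightly fiddly estimate $d(P_{n}(x),x)<1/n$ for the Schauder projections; the remainder is a routine combination of Brouwer's theorem with a compactness (diagonal) argument. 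Alternatively, once metrizability and compactness of $(D,w)$ are in hand, one may simply invoke the Schauder--Tychonoff theorem directly and dispense with the approximation step.
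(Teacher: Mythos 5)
Your proof is correct, and it is worth noting that the paper itself offers no proof of Theorem~\ref{teo:tikhonov}: the statement is quoted as the classical Tikhonov (Schauder--Tychonoff) fixed point theorem, in the form used in \cite{pripleko_2000}, and is only invoked at the end of Subsection~\ref{subsec:proofteoIP}. What you supply is a complete, self-contained derivation, and the route is the natural one for these hypotheses: separability together with reflexivity lets you replace the general locally convex (net-based) Tychonoff argument by a genuinely metric one. Concretely, $E=E^{**}$ separable forces $E^{*}$ separable, so the weak topology on the bounded set $D$ is metrized by $d(x,y)=\sum_{k}2^{-k}|\langle\ell_{k},x-y\rangle|$ (the series converges precisely because $D$ is bounded, and $d$ separates points by density of $\{\ell_{k}\}$ in the unit ball of $E^{*}$ and Hahn--Banach); Mazur plus Kakutani make $(D,d)$ compact; and sequential weak continuity --- which is exactly what the paper verifies for $\mathcal{R}$ in Lemma~\ref{lem:weaklycontinous} --- then coincides with continuity. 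The Schauder-projection/Brouwer step and the passage to the limit are standard and correctly executed: the estimate $d(P_{n}(x),x)<1/n$ does follow from convexity of $z\mapsto d(z,x)$, since $d$ is a sum of seminorms of the difference, and on the finite-dimensional convex sets $D_{n}$ the $d$-topology agrees with the norm topology, so Brouwer applies to $P_{n}\circ\mathcal{R}$. Compared with merely citing the theorem, your argument has the advantage of making explicit where separability and reflexivity enter, which is precisely the setting $E=[H^{1}(0,T)]^{2}$ of the paper.
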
 

\noindent
Hence, there exists a solution of \eqref{eq:operador_ecuation}.
Meanwhile, the local uniqueness follows by Lemma~\ref{lem:contraction}

\section*{Acknowledgment}
We acknowledge the support of the research projects 124109 3/R 
(Universidad del B{\'\i}o-B{\'\i}o, Chile),
121909 GI/C (Universidad del B{\'\i}o-B{\'\i}o, Chile),
Fondecyt 1120260 and MTM 2012-32325 (Spain).

\appendix
\section*{Appendix}

\section{Constants $\Gamma^u_i$ and $\Gamma^w_i$ for $i\in\{1,2,3\}$}
\label{app:constantes_gamma}
We denote the right side of \eqref{lema:direct_problem_solution:ec4} by
$\mathcal{E}_1+\mathcal{E}_2+\mathcal{E}_3$ 
where $\mathcal{E}_i,\; i\in\{1,2,3\},$ are defined and bounded
as follows
\begin{eqnarray*}
&&
\mathcal{E}_1:=\|(\bu_2)_t(\cdot,t)\|_{L^s}\|\delta \rho(\cdot,t)\|_{L^2}
	\|\delta \bu(\cdot,t)\|_{L^s}
+\|\bu_1(\cdot,t)\|_{L^{\infty}}\|\nabla \bu_2(\cdot,t)\|_{L^s}
	\|\delta \rho(\cdot,t)\|_{L^2}
	\|\delta \bu(\cdot,t)\|_{L^{\overline{s}}}
\nonumber
\\
&&
\hspace{0.9cm}
+|f_1(t)|\|\nabla h_1(\cdot,t)\|_{L^s}\|\delta \rho(\cdot,t)\|_{L^2}
	\|\delta \bu(\cdot,t)\|_{L^{\overline{s}}}
+|f_1(t)|\|\bm(\cdot,t)\|_{L^\infty}
	\|\delta \rho(\cdot,t)\|_{L^2}\|\delta \bu(\cdot,t)\|_{L^2}
\nonumber
\\
&&
\hspace{0.5cm}
\le
C_{e_1}(t)\; 
\|\delta \rho(\cdot,t)\|^2_{L^2}
+\frac{3\epsilon_\bu}{2}C_{poi} \; \|\nabla \delta \bu(\cdot,t)\|^2_{L^2}
+\frac{\epsilon_\bu}{2\alpha}\; \|\sqrt{\delta_1}\delta \bu(\cdot,t)\|^2_{L^2},
\nonumber
\\
&&
\mathcal{E}_2:=
\|\rho_2(\cdot,t)\|_{L^\infty}\|\nabla \bu_2(\cdot,t)\|_{L^s}
	\|\delta \bu(\cdot,t)\|_{L^2}
	\|\delta \bu(\cdot,t)\|_{L^{\overline{s}}}
+2\mu_r \|\nabla\delta \bw(\cdot,t)\|_{L^2} \|\delta \bu(\cdot,t)\|_{L^2}
\\
&&
\hspace{0.9cm}
+|f_2(t)|\|\nabla \delta h(\cdot,t)\|_{L^2}\|\rho_2(\cdot,t)\|_{L^\infty}
	\|\delta \bu(\cdot,t)\|_{L^2},
\nonumber
\\
&&
\hspace{0.5cm}
\le
C_{e_2}(t)\;\|\sqrt{\delta_1}\delta \bu(\cdot,t)\|^2_{L^2}
+\frac{\epsilon_\bu}{2}C_{poi} \; \|\nabla \delta \bu(\cdot,t)\|^2_{L^2}
+\frac{\epsilon_h}{2} \; \|\nabla \delta h(\cdot,t)\|^2_{L^2}
+\frac{\epsilon_\bw}{2} \; \|\nabla \delta \bw(\cdot,t)\|^2_{L^2},
\\
&&
\mathcal{E}_3:=
|\delta f(t)|\|\nabla h_1(\cdot,t)\|_{L^s}\|\rho_2(\cdot,t)\|_{L^\infty}
	\|\delta \bu(\cdot,t)\|_{L^2}|\Omega|^{1/\overline{s}}
\\
&&
\hspace{0.9cm}
+|\delta f(t)|\|\bm(\cdot,t)\|_{L^\infty}
\|\rho_2(\cdot,t)\|_{L^\infty}\|\delta \bu(\cdot,t)\|_{L^2}
|\Omega|^{1/2}
\\
&&
\hspace{0.5cm}
\le
C_{e_3}(t) |\delta f(t)|^2
+\frac{\epsilon_\bu}{\alpha} \|\sqrt{\rho_1}\delta \bu(\cdot,t)\|^2_{L^2},
\end{eqnarray*}
where
\begin{eqnarray*}
C_{e_1}(t)&=&\frac{1}{2\epsilon_u}
\Big\{
\|(\bu_2)_t(\cdot,t)\|^2_{L^s}
+|f_1(t)|^2\|\nabla h_1(\cdot,t)\|^2_{L^s}
\\&&
+|f_1(t)|^2\|\bm(\cdot,t)\|^2_{L^\infty}
+\|\bu_1(\cdot,t)\|^2_{L^{\infty}}\|\nabla \bu_2(\cdot,t)\|^2_{L^s}
\Big\},
\\
C_{e_2}(t)&=&\frac{1}{\alpha}
\left\{
\frac{1}{2\epsilon_u}\|\rho_2(\cdot,t)\|^2_{L^\infty}\|\nabla \bu_2(\cdot,t)\|^2_{L^s}
+\frac{2\bu^2_r}{\epsilon_w}
+\frac{1}{\epsilon_h}|f_2(t)|^2\|\rho_2(\cdot,t)\|^2_{L^\infty}
\right\}\mbox{ and}
\\
C_{e_3}(t)&=&
\frac{1}{2\epsilon_\bu}
\Big\{\|\nabla h_1(\cdot,t)\|^2_{L^s}|\Omega|^{2/s}
  +\|\bm(\cdot,t)\|^2_{L^\infty}|\Omega|\Big\}\|\rho_2(\cdot,t)\|^2_{L^\infty}.
\end{eqnarray*}
Then, we deduce the inequality~\eqref{lema:direct_problem_solution:ec5} with
\begin{eqnarray*}
\Gamma^\bu_1(t)=\max\left\{C_{e_2}(t)+\frac{3\epsilon_\bu}{2\alpha},C_{e_1}(t)\right\},
\quad
\Gamma^\bu_2(t)=\frac{\epsilon_h}{2}
\quad
\mbox{and}
\quad
\Gamma^\bu_3(t)=C_{e_3}(t).
\end{eqnarray*}
Meanwhile, for \eqref{lema:direct_problem_solution:ec6} we proceed in a similar way
and obtain that
\begin{eqnarray*}
\Gamma^\bw_1(t)=\max\left\{C_{f_2}(t)+\frac{3\epsilon_\bw}{2\alpha},C_{f_1}(t)\right\},
\quad
\Gamma^\bw_2(t)=\frac{\epsilon_g}{2}
\quad
\mbox{and}
\quad
\Gamma^\bw_3(t)=C_{f_3}(t),
\end{eqnarray*}
with
\begin{eqnarray*}
C_{f_1}(t)&=&\frac{1}{2\epsilon_\bw}
\Big\{
\|(\bw_2)_t(\cdot,t)\|^2_{L^s}
+|g_1(t)|^2\|\nabla r_1(\cdot,t)\|^2_{L^s}
\\&&
+|g_1(t)|^2\|\bq(\cdot,t)\|^2_{L^\infty}
+\|\bw_1(\cdot,t)\|^2_{L^{\infty}}\|\nabla \bw_2(\cdot,t)\|^2_{L^s}
\Big\},
\\
C_{f_2}(t)&=&\frac{1}{\alpha}
\left\{
\frac{1}{2\epsilon_\bw}\|\rho_2(\cdot,t)\|^2_{L^\infty}\|\nabla \bw_2(\cdot,t)\|^2_{L^s}
+\frac{2\bu^2_r}{\epsilon_\bu}
+\frac{1}{\epsilon_g}|g_2(t)|^2\|\rho_2(\cdot,t)\|^2_{L^\infty}
\right\}\mbox{ and}
\\
C_{f_3}(t)&=&
\frac{1}{2\epsilon_\bw}
\Big\{\|\nabla r_1(\cdot,t)\|^2_{L^s}|\Omega|^{2/s}
  +\|\bq(\cdot,t)\|^2_{L^\infty}|\Omega|\Big\}\|\rho_2(\cdot,t)\|^2_{L^\infty}.
\end{eqnarray*}

\section*{References}

\end{document}